\definecolor{ghcolor}{RGB}{0, 150, 200} 
\definecolor{winestain}{rgb}{0.5,0,0}
\newtheorem{thm}[subsubsection]{Theorem}
\newtheorem{lemma}[subsubsection]{Lemma}
\newtheorem{corollary}[subsubsection]{Corollary}
\newtheorem{prop}[subsubsection]{Proposition}
\theoremstyle{definition}
\newtheorem{defn}[subsubsection]{Definition}
\newtheorem{notation}[subsubsection]{Notation}
\theoremstyle{remark}
\newtheorem{remark}[subsubsection]{Remark}
\newtheorem{rem}[subsubsection]{Remark}
\def\numequation{\addtocounter{subsubsection}{1}\begin{equation}}
\def\nummultline{\addtocounter{subsubsection}{1}\begin{multline}}
\newcommand{\loccit}{\textit{loc. cit.}}
\newcommand{\matkeuu}{\Mat_d(k_E\llb u\rrb)}
\newcommand{\glke}{\GL_d(k_E)}
\newcommand{\M}{\mathfrak{M}}
\newcommand{\hatm}{\hat{\mathfrak{M}}}
\newcommand{\hatN}{\hat{\mathfrak{N}}}
\newcommand{\hatn}{\hat{\mathfrak{N}}}
\newcommand{\barhatm}{\overline{\hat{\mathfrak{M}}}}
\newcommand{\barhatn}{\overline{\hat{\mathfrak{N}}}}
\newcommand{\barm}{\overline{\mathfrak{M}}}
\newcommand{\barn}{\overline{\mathfrak{N}}}
\newcommand{\llb}{\llbracket}  
\newcommand{\rrb}{\rrbracket}
\newcommand{\vshape}{\varphi -\textnormal{shape}}
\newcommand{\vtshape}{(\varphi, \tau) -\textnormal{shape}}
\newcommand{\barchi}{\overline{\chi}}
\newcommand{\barrho}{\overline{\rho}}
\newcommand{\rhobar}{\overline{\rho}}
\newcommand{\rbar}{\overline{r}}
\newcommand{\Zp}{\mathbb{Z}_p}
\newcommand{\Qp}{\mathbb{Q}_p}
\newcommand{\Fp}{\mathbb{F}_p}
\newcommand{\Fpbar}{\overline{\mathbb{F}}_p}
\newcommand{\barK}{\overline{K}}
\newcommand{\Z}{\mathbb{Z}}
\newcommand{\Q}{\mathbb{Q}}
\newcommand{\diag}{\textnormal{diag}}
\newcommand{\col}{\textnormal{col}}
\DeclareMathOperator{\Ext}{Ext}
\DeclareMathOperator{\Fil}{Fil}
\DeclareMathOperator{\Gal}{Gal}
\DeclareMathOperator{\GL}{GL}
\DeclareMathOperator{\Hom}{Hom}
\DeclareMathOperator{\Mat}{Mat}
\DeclareMathOperator{\Mod}{Mod}
\newcommand{\cris}{\mathrm{cris}}
\newcommand{\HT}{\mathrm{HT}}
\newcommand{\D}{\mathcal{D}}
\newcommand{\huaS}{\mathfrak{S}}
\newcommand{\huaM}{\mathfrak{M}}
\newcommand{\huaN}{\mathfrak{N}}
\newcommand{\huaL}{\mathfrak{L}}
\newcommand{\Ghat}{\hat{G}}
\newcommand{\That}{\hat{T}}
\newcommand{\bolde}{\boldsymbol{e}}
\newcommand{\boldt}{\boldsymbol{t}}
\newcommand{\boldf}{\boldsymbol{f}}
\newcommand{\squaremat}[4]{
\left(
 \begin{array}{ccccc}
   #1  & #2\\
   #3 &  #4
 \end{array}
\right)
}
\title{Adapted bases of Kisin modules and Serre weights}
\author{HUI GAO}
\address{Beijing International Center for Mathematical Research, Peking University, No. 5 Yiheyuan Road, Haidian District, Beijing 100871, China}
\email{gaohui@math.pku.edu.cn}
\subjclass[2010]{Primary 11F80, 11F33}
\keywords{torsion Kisin modules, crystalline representations}
\begin{document}

\begin{abstract}
Let $p>2$ be a prime. Let $K$ be a tamely ramified finite extension over $\Qp$ with ramification index $e$, and let $G_K$ be the Galois group. We study Kisin modules attached to crystalline representations of $G_K$ whose labeled Hodge-Tate weights are relatively small (a sort of ``$er\le p$" condition where $r$ is the maximal Hodge-Tate weight). In particular, we show that these Kisin modules admit ``adapted bases". We then apply these results in the special case $e=2$ to study reductions and liftings of certain crystalline representations. As a consequence, we establish some new cases of weight part of Serre's conjectures (when $e=2$).

\end{abstract}

\maketitle
\pagestyle{myheadings}
\markright{Adapted bases of Kisin modules and Serre weights}

\tableofcontents


\section{Introduction} \label{section: intro}

\renewcommand{\O}{\mathcal{O}}
\newcommand{\ve}{\varepsilon} 
\newcommand{\fS}{\mathfrak{S}}

\newcommand{\gr}{{\rm gr}}
\newcommand{\fijd}{{\Fil ^{\{m_{ij }\}}\D}}

\newcommand{\mij}{\{m_{ij}\}}

\newcommand{\mfixedi}{\{m_{i,0},\ldots,m_{i,e-1}\}}


\newcommand{\mf}{\mathfrak}  

\subsection{Motivations and overview}
\subsubsection{Weight part of Serre's conjectures}
Let $F$ be an imaginary CM field and let $G_F$ be the Galois group. Suppose $\rbar: G_F \to \GL_d(\Fpbar)$ is an irreducible representation that is automorphic. The weight part of Serre's conjectures asks for which weights is $\rbar$ automorphic of.
Weight part of Serre's conjectures have proved to be useful in the quest for a $p$-adic local Langlands correspondence, but known results have restrictions in either the dimension $d$ or the ramification of $p$ in $F$.
Conjecturally, the automorphic weights of $\rbar$ should be determined by the information in $\rbar |_{G_{F_v}}$ (or even furthermore, by $\rbar |_{I_v}$), where $v$ runs through all places of $F$ above $p$, $F_v$ the completion at $v$ with Galois group $G_{F_v}$, and $I_v$ the inertia subgroup. When $d=2$ (with $p>2$ and in the unitary group setting), this conjecture is completely proved by \cite{GLS14, GLS15}; in particular, they have shown that for a given Serre weight, it is automorphic if and only if $\rbar |_{G_{F_v}}, \forall v|p$ admits a crystalline lift with Hodge-Tate weights corresponding to the given Serre weight.

However, once the dimension $d$ gets bigger than $2$, we have rather little evidence to see if the above picture (automorphic weight $\Leftrightarrow$ crystalline weight) still holds completely. Note that one direction (automorphic weight $\Rightarrow$ crystalline weight) is obviously true. So we would like to know what crystalline weights are automorphic.
Our previous paper \cite{Gao15} proved some results in this direction (in the unramified case), and this current paper is a continuation of the work in \textit{loc. cit.}.
Our paper gives the first evidence about weight part of Serre's conjectures where we can allow (degree 2) ramification at $p$, and dimension $d>2$. The method, similarly as in \cite{Gao15}, is a generalization of \cite{GLS14, GLS15}, but requires substantially more careful analysis of integral $p$-adic Hodge theory, see Subsection \ref{subsection: str of paper} for some highlights of the technical difficulties.

\subsubsection{Adapted bases of Kisin modules}
Now let us state more precisely our fist main local results. First, we set up some notations.
Let $p>2$, $K/\Qp$ a finite extension, $K_0$ the maximal unramified subfield, $\overline K$ a fixed algebraic closure, $G_K:=\Gal(\overline K/K)$ the absolute Galois group. Let $\pi$ be a fixed uniformizer of $K$, and $k$ the residue field. Let $f = [K_0 : \Q_p]$, $e = [K: K_0]$.
Let $E/\Qp$ (the coefficient field) be a finite extension that contains the image of all the embeddings $K \hookrightarrow \overline K$. Let $\O_E$ be the ring of integers, $\omega_E$ a fixed uniformizer, $k_E$ the residue field.
Fix an embedding $\kappa_0 \in \Hom_{\Q_p} (K_0,E)$, and recursively define $\kappa_i \in
\Hom_{\Qp}(K_0,E)$ for $i \in \Z$ so that $\kappa_{i+1}^p \equiv \kappa_i
\pmod{p}$.
Then $\Hom_{\Qp}(K_0,E)=\{\kappa_0,\ldots,\kappa_{f-1}\}$, with $\kappa_f = \kappa_0$.
Label $\Hom_{\Q_p}(K, E)$ as $\{\kappa_{ij}  :  0\leq i \leq f-1, 0 \leq j\leq e-1\}$ in any manner so that $\kappa_{ij}|_{K_0} =
\kappa_i$.

We will use the following listed notations often (see Subsection \ref{subsection_notations} for any unfamiliar terms):
\noindent (\textnormal{\textbf{CRYS}}). Let $p>2$ be an odd  prime, $K/\Qp$ a finite extension.
\begin{itemize}
  \item Let $V$ be a crystalline representation of $E$-dimension $d$, $D$ the filtered $\varphi$-module associated to $V$, which is a finite free $K_0 \otimes_{\Qp}E$-module.
 Let the labelled Hodge-Tate weights be $\HT_{\kappa_{i, j}}(D)=  \{0 = r_{i, j, 1} < \ldots < r_{i, j, d} \leq p\}$.
 We call $\HT_{\kappa_{i, 0}}(D)$ the \emph{principal} Hodge-Tate weights, and $\HT_{\kappa_{i, j}}(D), \forall j\neq 0$ the \emph{auxiliary} Hodge-Tate weights.

  \item Let $\rho=T$ be a $G_K$-stable $\mathcal O_E$-lattice in $V$, and $\hat \huaM \in \Mod_{\huaS_{\mathcal O_E}}^{\varphi, \Ghat}$ the $(\varphi, \Ghat)$-module attached to $T$. Let $\barrho: =T/\omega_ET$ be the reduction.
   \item Let $\hatm= \prod_{i=0}^{f-1} \hatm_i$ be the decomposition, where $\hatm_i=\varepsilon_i \hatm$. And similarly for the ambient Kisin module $\huaM =\prod_{i=0}^{f-1} \huaM_i$.
  \item Denote $\barhatm$ the reduction modulo $\omega_E$ of $\hat\huaM$, so it decomposes as $\barhatm =\prod_{i=0}^{f-1} \barhatm_i$. And similarly for the ambient Kisin module $\barm =\prod_{i=0}^{f-1} \barm_i$.
\end{itemize}

The following local result says that when the labeled Hodge-Tate weights are relatively small (a sort of ``$er \le p$ condition, if we use $r$ to denote the maximal Hodge-Tate weight), then the Kisin module admits an \emph{``adapted basis"}.

\begin{thm}\label{adpt basis}
With notations in (CRYS), suppose $p \nmid e$, and assume that $\sum_{j=0}^{e-1} r_{i,j,d} \leq p, \forall i$.
Then there exists a basis $\bolde_{i}$ of $\huaM_i$ such that,
$$\varphi(\bolde_{i-1}) = \bolde_{i} X_i \left(\prod_{j=1}^{e-1} \Lambda_{i,e-j} Z_{i,e-j} \right)
  \Lambda_{i,0} Y_i ,  $$
where
\begin{itemize}
  \item $X_i, Y_i, Z_{i, e-j} \in \GL_d(\O_E\llb u\rrb), \forall i, j$.
  \item $\overline Y_i:=Y_i  \pmod{\omega_E}  =Id$, and $\overline Z_{i, e-j}:= Z_{i, e-j}\pmod{\omega_E}  \in \GL_d(k_E)$.
  \item $\Lambda_{i, j}$ is the diagonal matrix $[(u-\pi_{ij})^{r_{i,j,x}}]_{x=1}^d$.
\end{itemize}
\end{thm}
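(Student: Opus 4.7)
The plan is to extract the diagonal factors $\Lambda_{i,j}$ from the Frobenius matrix one embedding $\kappa_{i,j}$ at a time, using Kisin's equivalence between crystalline representations and Kisin modules to translate filtration data on the filtered $\varphi$-module $D$ into Frobenius structure on $\huaM$. First I would pass to $D_i := \varepsilon_i D$ and exploit the decomposition $K \otimes_{K_0, \kappa_i} E \cong \prod_{j=0}^{e-1} E$, valid because $p \nmid e$, sending $\pi \otimes 1 \mapsto (\pi_{i,0}, \ldots, \pi_{i,e-1})$. This splits $D_i$ as a product of $d$-dimensional $E$-vector spaces $D_{i,j}$, each carrying a decreasing filtration with jumps exactly at the prescribed positions $r_{i,j,1} < \cdots < r_{i,j,d}$. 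On each factor one selects a basis adapted to its filtration, and stitching these together across $j$ produces a candidate basis of $D_i$ from which $\bolde_i$ will be constructed.

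Next I would invoke Kisin's correspondence to transport this filtered-module basis into a basis of $\huaM_i$ and directly compute the resulting Frobenius matrix. In Kisin's construction, a filtration jump of height $r$ at the embedding $\kappa_{i,j}$ produces in the Kisin-module Frobenius a divisibility factor $(u - \pi_{ij})^r$, recorded precisely by one diagonal entry of $\Lambda_{i,j}$. Collecting these $e$ diagonal matrices and placing the residual unit basis-change matrices on either side produces the factorization
$$A_i = X_i \, \Lambda_{i,e-1} Z_{i,e-1} \Lambda_{i,e-2} Z_{i,e-2} \cdots \Lambda_{i,1} Z_{i,1} \, \Lambda_{i,0} Y_i,$$
with $X_i, Y_i, Z_{i,e-j} \in \GL_d(\OE\llb u \rrb)$. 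The hypothesis $\sum_j r_{i,j,d} \le p$ is used here to guarantee that the unwinding of the $E(u)$-factors stays within $\OE \llb u \rrb$ without incurring divided powers or denominators, reflecting that in this small-weights range the integral Kisin-module theory is sufficient and no Breuil-module divided-power corrections are required.

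The hard part will be establishing the reduction properties $\overline{Y}_i = \Id$ and $\overline{Z}_{i,e-j} \in \GL_d(k_E)$ (constant in $u$). For $Y_i$, the strategy is to choose the lift $\bolde_i$ carefully so that the rightmost correction, which is localized near $\pi_{i,0}$, is trivial modulo $\omega_E$; this can be arranged by first fixing an adapted basis of the reduction $\barm_i$ and then lifting to $\huaM_i$ via Nakayama, exploiting the freeness of the ambient Kisin module. The constancy of $\overline{Z}_{i,e-j}$ in $u$ modulo $\omega_E$ is more delicate: one must show that the inter-embedding basis changes on $\barm_i$ cannot depend on $u$, which amounts to a rigidity statement for mod-$\omega_E$ adapted bases at each embedding $\kappa_{i,j}$ (modulo $\omega_E$, all the factors $u-\pi_{ij}$ degenerate to $u$, so any $u$-dependence in the transition must come from genuine noise that can be absorbed into $X_i$ on the far left). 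Here once more the small weights condition is essential, since without it $u$-dependent corrections of higher order could be hidden inside the $Z$'s in ways not controllable by the filtration data alone.
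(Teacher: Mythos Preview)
Your proposal has the right shape at the highest level---extract $\Lambda_{i,j}$'s one embedding at a time from filtration data---but it misses the actual mechanism, and as written the middle paragraph is not a proof strategy but a restatement of the conclusion. Saying ``Kisin's correspondence transports a filtered-module basis into a basis of $\huaM_i$ from which one reads off the factorization'' skips the entire content: Kisin's functor does not come with a recipe that turns an adapted basis of $D_K$ into one of $\huaM$ with the claimed Frobenius factorization, and this is precisely what has to be proved.

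The paper's argument works not in $D$ directly but in $\M^*_i = \huaS \otimes_{\varphi,\huaS} \M_i$, viewed inside the Breuil module $\D_i = S_E \otimes D_i$, and studies the filtrations $\Fil^{\{m_{i,0},\ldots,m_{i,e-1}\}}\M^*_i$ defined recursively via the differential operator $N$ on $\D$. One first produces (Proposition~\ref{level0base}) a basis $(\e'_{i,0,x})$ of $\M^*_i$ with $\e'_{i,0,x} \in \Fil^{\{r_{i,0,x},0,\ldots,0\}}\M^*_i$ and $\e'_{i,0,x} \equiv e_{i-1,0,x} \pmod{\omega_E}$ for a genuine basis $(e_{i-1,0,x})$ of $\M_{i-1}$; this congruence is where $\overline{Y}_i = \Id$ comes from. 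One then inductively (Proposition~\ref{existbase}) pushes into deeper filtration levels by constructing $\e'_{i,j,x} \in \Fil^{\{\rr,r_{i,j,x},\0\}}\M^*_i$ using the corrector $\tilde\f^{(n+1)} = \sum_{\ell=0}^n \frac{H(u)^\ell N^\ell(\f^{(n)})}{\ell!}$ with a carefully chosen $H(u)$; this is where $N$ is essential and where the hypothesis $\sum_j r_{i,j,d} \le p$ is actually consumed, namely to control the $\pi_j$-valuations of the coefficients appearing in the $N^\ell$-terms (your gloss about avoiding divided powers is not the real reason). The transition matrices then take the explicit form $Z'_{i,j} = B_{i,j} A_{i,j}$ with $B_{i,j} \in \GL_d(\O_E)$ and $A_{i,j} \equiv \Id \pmod{\omega_E}$, which immediately gives $\overline{Z}_{i,e-j} \in \GL_d(k_E)$; there is no abstract ``rigidity'' argument.

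In short: the operator $N$ and the inductive filtration construction on $\M^*_i \subset \D_i$ are the heart of the proof, and your proposal does not touch them.
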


Note that in the matrix of $\varphi$ with respect to the ``adapted basis" $\bolde_{i}$, we can \emph{``isolate"} the diagonal matrices $\Lambda_{i, j}$, where we can read off the labeled Hodge-Tate weights $r_{i,j,x}$ directly. (See Remark \ref{rem: adapted basis} for more comments on the significance of the adapted basis).
We remark that Theorem \ref{adpt basis} is the starting point, as well as a key ingredient in the proof of the following Theorem \ref{introlifting}.


\subsubsection{Crystalline liftings and Serre weight conjectures}
The following is our ``crystalline lifting" result.

\begin{thm} \label{introlifting}
With notations in (\textnormal{\textbf{CRYS}}). Suppose the ramification degree of $K$ is $e=2$. Suppose the Hodge-Tate weights of $V$ are such that $\HT_{\kappa _{i,0}} (V) =\{0 = r_{i,0, 1}< \ldots< r_{i,0, d} \leq p\}, \forall i$ and $\HT_{\kappa_{i, 1}} (V)=\{0, 1, \ldots, d-1\}, \forall i$.
Suppose that $\barrho$ is upper triangular. Suppose furthermore the following conditions on the Hodge-Tate weights are satisfied:
\begin{enumerate}
  \item (The principle weights are enough separated): $r_{i, 0, x+1} -r_{i, 0, x} \geq d, \forall i, x,$
  \item (The sum of weights is bounded): $r_{i, 0, d} +(d-1) \leq p-2, \forall i$.
\end{enumerate}
Then $\barrho$ has an upper triangular crystalline lift $\rho'$, such that $\HT_{i,j}(\rho')=\HT_{i,j}(\rho), \forall i, j$.
\end{thm}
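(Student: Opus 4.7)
The plan is to combine Theorem \ref{adpt basis} with explicit Kisin module manipulations, following the strategy of \cite{GLS14, GLS15, Gao15} and adapting it to the ramified setting $e = 2$.

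First, I would apply Theorem \ref{adpt basis} to our crystalline $V$. The hypothesis $p \nmid e$ is automatic ($e = 2 < p$), and the weight sum bound $\sum_{j=0}^{1} r_{i,j,d} = r_{i,0,d} + (d-1) \leq p - 2 < p$ is given by hypothesis (2). For $e = 2$, the theorem provides an adapted basis $\bolde_i$ of $\huaM_i$ with
\[
\varphi(\bolde_{i-1}) = \bolde_i X_i \Lambda_{i,1} Z_{i,1} \Lambda_{i,0} Y_i,
\]
where $\overline{Y}_i = \Id$; hence the reduction mod $\omega_E$ of the matrix of $\varphi$ on $\barm_i$ is $\overline{X}_i \Lambda_{i,1} \overline{Z}_{i,1} \Lambda_{i,0}$.

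Second, the upper triangularity of $\barrho$ translates, via the anti-equivalence between $G_{K_\infty}$-representations and Kisin modules, to a complete flag of $\varphi$-stable submodules of $\barm$. I would use this flag together with the adapted form to perform a change of basis in $\GL_d(k_E \llb u \rrb)$ putting the reduced $\varphi$-matrix in upper triangular form. The separation hypothesis $r_{i,0,x+1} - r_{i,0,x} \geq d$ is essential here: it forces the $u$-adic degrees of the diagonal entries to lie in disjoint ranges, so after a suitable reordering, each diagonal entry must be, up to a unit in $k_E\llb u \rrb^\times$, of the form $(u - \pi_{i,0})^{r_{i,0,\sigma(x)}} (u - \pi_{i,1})^{j_x}$ for some permutation $\sigma$ of $\{1, \ldots, d\}$ and some permutation $(j_1, \ldots, j_d)$ of $(0, 1, \ldots, d-1)$; the off-diagonal entries can be normalized by subtracting multiples of the diagonal monomials.

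Third, I would construct an explicit upper triangular Kisin module $\huaM'$ over $\OE \llb u \rrb$ lifting $\barm$: take the diagonal entries to be the exact monomials $(u-\pi_{i,0})^{r_{i,0,\sigma(x)}} (u-\pi_{i,1})^{j_x}$ appearing above, and take any $\OE\llb u\rrb$-lifts of the normalized off-diagonal entries. The rank-one subquotients of $\huaM'$ then correspond to crystalline characters of $G_K$ whose labeled Hodge-Tate weights, aggregated over $\sigma$ and $(j_x)$, recover the prescribed sets $\{r_{i,0,x}\}$ at $\kappa_{i,0}$ and $\{0, 1, \ldots, d-1\}$ at $\kappa_{i,1}$.

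The main obstacle is verifying that the $G_K$-representation $\rho'$ attached to $\huaM'$ is genuinely crystalline (and not merely semistable or of crystalline Hodge-Tate weights but with a ``wild'' $(\varphi, \Ghat)$-structure). This requires equipping $\huaM'$ with a $\Ghat$-action lifting that on $\barm$ and satisfying the crystalline descent condition; the upper bound $r_{i,0,d} + (d-1) \leq p - 2$ enters precisely here, providing enough room in the relevant crystalline $\Ext^1$ groups to realize each successive extension by an explicit cocycle, in the spirit of the computations in \cite{Gao15}. Once crystallinity is confirmed, the equality $\HT_{i,j}(\rho') = \HT_{i,j}(\rho)$ is built into the diagonal of $\huaM'$, the upper triangularity of $\rho'$ follows from that of $\huaM'$ together with compatibility of the $\Ghat$-action, and the isomorphism $\overline{\rho'} \cong \barrho$ is immediate from $\huaM' \otimes_{\OE} k_E \cong \barm$.
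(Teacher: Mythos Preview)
Your first two steps are on the right track and mirror the paper: apply the adapted basis, then use upper triangularity of $\barrho$ to bring $\varphi_{\barm}$ into upper triangular form with diagonal $u^{r_{i,0,\sigma_{i,0}(x)}+r_{i,1,\sigma_{i,1}(x)}}$. But your third step contains a genuine gap, and the paper takes a different route precisely to avoid it.

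You propose to build $\huaM'$ by lifting the diagonal monomials exactly and lifting the off-diagonal entries arbitrarily, and then to equip this $\huaM'$ with a crystalline $\Ghat$-structure after the fact. This does not work: an arbitrary $\O_E$-lift of $\barm$ as a $\varphi$-module need not admit any $\Ghat$-structure lifting the one on $\barhatm$, let alone a crystalline one. Your acknowledgment that ``the upper bound \dots\ provides enough room in the relevant crystalline $\Ext^1$ groups'' is the right instinct, but you have not said what controls the \emph{target} of the reduction map, i.e.\ why the space of possible $\barhatm$'s is small enough for the crystalline $\Ext$ to surject onto it.

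The paper fills this gap with a structural result you have not identified. Beyond putting $F_i$ in upper triangular (DEG) form, one proves (Proposition~\ref{shape}) a factorization $F_i = C_i (F_i)_1 (F_i)_0$ where both $(F_i)_1$ and $(F_i)_0$ satisfy a rigid ``property~(P)'': each off-diagonal entry is either zero or a \emph{constant} multiple of the corresponding diagonal monomial. This confines $\barhatm$ to a subspace $\Ext_{\vtshape}(\barhatm_2,\barhatm_1)$ whose $k_E$-dimension is bounded by an explicit combinatorial number $d_{\cris}$. One then computes that the $\O_E$-rank of $\Ext_{\cris}(\hatm_2,\hatm_1)$ equals the same $d_{\cris}$, forcing the reduction map $\Ext_{\cris}/\omega_E \to \Ext_{\vtshape}$ to be an isomorphism; the lift is obtained by induction on $d$. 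The separation hypothesis (1) is used not merely to read off the diagonal exponents, but to run the linear algebra (Lemmas~\ref{shapelemma}, \ref{lemma: shape lemma with extra power}) that yields the property~(P) factorization; the bound (2) is used to inject $\Ext_{\vtshape}$ into $\Ext_{\vshape}$ via \cite[Prop.~6.2]{Gao15}. Without the property~(P) step your dimension count has no upper bound on the target, and the surjectivity you need is unproved.
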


Note that Theorem \ref{introlifting} is actually a special case of a slightly more general theorem (Theorem \ref{mainlocal}) that we prove. However, Theorem \ref{introlifting} is the case that can be applied to weight part of Serre's conjectures (see Remark \ref{rem:higher e}).


Our result in weight part of Serre's conjectures is straightforward application of the above crystalline lifting theorem (via automorphy lifting theorems of \cite{BLGGT14}), and we omit it in the introduction to save space, see Theorem \ref{application} for more detail.



\subsection{Structure of the paper.}\label{subsection: str of paper}
In Section \ref{section: filtration of kisin modules}, when $K/\Qp$ is tamely ramified and the Hodge-Tate weights satisfy certain conditions, we obtain a structure theorem for Kisin modules $\M$ associated to lattices in our crystalline representations.
The idea is very similar to \cite[\S 2]{GLS15}, namely, we study filtration structures of various modules.
However, we need to study \emph{``deeper"} level of filtrations (see Proposition \ref{existbase}), where the analysis is much more involved; in particular, many techniques in dimension $2$ (as in \cite{GLS15}) are no longer valid, e.g., see the remarks in the proof of Proposition \ref{above}.

In Section \ref{section: shape kisin}, we give a structure theorem for upper triangular reductions of crystalline representations that we study, which indeed gives us the \emph{upper bound} of all the possible shapes of upper triangular $\barm$ that we study.
The idea is to generalize results in \cite[\S 4]{Gao15}. However, the linear algebra is much more involved, and again similarly as in Section \ref{section: filtration of kisin modules}, many techniques in dimension $2$ are no longer valid. In particular, the structure result in Proposition \ref{shape} has never been observed before.

In Section \ref{section: final section lifting}, we prove our crystalline lifting theorem (using ideas and techniques from \cite{Gao15}), and apply it to weight part of Serre's conjectures.

\subsection{Notations.} \label{subsection_notations}
Many notations here are taken directly from \cite{GLS15, Gao15}, where the readers can find more details.

\subsubsection{Decomposition of rings} \label{subsection: decom of rings}
We have $W(k)\otimes_{\Zp}\O_E \simeq \prod_{i=0}^{f-1} W(k) \otimes_{ W(k), \kappa_i} \O_E,$
and suppose $1$ maps to $(\ve_0, \ldots, \ve_{f-1})$, then $\ve_i(W(k) \otimes_{\Zp} \O_E)
\simeq W(k) \otimes_{ W(k), \kappa_i} \O_E$.
Let $K_{0, E} =K_0\otimes_{\Qp}E$ and ${K}_E = K \otimes_{\Qp} E$. We have a natural decomposition $K_{0,E} = \prod
_{i=0}^{f-1} \ve_i ({K_{0, E }}) \simeq \prod _{i=0}^{f-1} E_{i}$
where $E_{i}= \ve_i ({K_{0, E}}) \simeq  K_0 \otimes_{K_0, \kappa_i}
E$. Similarly we have $K_E  = \prod_{i,j} E_{ij}$ with $E_{ij} = K
\otimes_{K , \kappa _{ij}} E$.  We will sometimes identify an element
$x \in E$ with an element of
$E_i$ via the map $x \mapsto 1 \otimes x$, and similarly for $E_{ij}$.

Let $\huaS: = W(k)\llb u\rrb$, $E(u)\in W(k)[u]$ the minimal polynomial of $\pi$ over $W(k)$, and $S$ the $p$-adic completion of the PD-envelope of $\huaS$ with respect to the ideal $(E(u))$. Let $A$ be an $\huaS$-algebra. We write $A_{\O_E} : = A \otimes_{\Z_p}
\O_E$, and $A_E := A \otimes_{\Z_p} E$. Then we have decompositions: $A_{\O_E} \simeq \prod_{i =0}^{f-1} A_{ \O_{E},i}$, with $A_{\O_{E},i} =\ve _i (A_{\O_E})  \simeq A  \otimes_{W(k), \kappa_i} \O_E,$ and
$A_E \simeq \prod_{i =0}^{f-1} A_{E,i}$, with $A_{E,i}=  A \otimes_{
  W(k),\kappa_i} E.$

We write $\iota$ for the isomorphism $\huaS_{\O_E}
\simeq   \prod_{i =0}^{f-1} \huaS_{ \O_{E},i}$ and $\iota_i$  the
projection  $\huaS_{\O_E} \twoheadrightarrow \huaS_{\O_{E},i} $. Then we have
$\huaS_{\O_{E},i} = \huaS \otimes_{W(k),\kappa_i} \O_E  \simeq
\O_{E_i}\llb u \rrb$, where $\O_{E_i}$ denotes the ring of integers in
$E_i$.
We write $\pi_{ij}= \kappa_{ij}(\pi) \in
E$. For each $\kappa_i$, we define $E^{\kappa_i} (u) =
\prod_{j=0}^{e-1} (u - \pi_{ij})$ in  $E[u]$,  so that
$E^{\kappa_i}(u)$ is just the polynomial obtained by acting on the coefficients of $E(u)$ by $\kappa_i$. Note that identifying $E_i$ with $E$ will
identify $\iota_i(E(u))$ with  $E^{\kappa_i} (u)$.

Let $f_\pi$ be the $W(k)$-linear map $S \to \O_K$ such that $u \mapsto \pi$.  We also denote $f_{\pi}$ for the map  $f_\pi \otimes_{\Z_p} E : S_ E
\to (\mathcal O_K)_E$.  We have surjections $\iota_{ij}:  (\mathcal O_K)_E \to K_E \to
E_{ij}$ for all $i,j$, and composing with $f_{\pi}$ gives $E$-linear
maps $f_{ij}:= \iota_{ij} \circ f_\pi: S_{E} \to E_{ij}$. Restricting the map
$f_{ij}$ to $\huaS_{\O_E}$ gives an $\O_E$-linear surjection $\huaS_{\mathcal O_E}
\to \O_{E_{ij}}$ which we also denote by $f_{ij}$ (Here $\O_{E_{ij}}$
denotes the ring of integers in $E_{ij}$).

\subsubsection{Decomposition of modules}

Recall that $D$ (the filtered $\varphi$-module associated to the crystalline representation $V$) is a finite free $K_{0, E}$-module of rank $d$, so that $D_K:= K \otimes_{K_0} D$ is a finite free $K_E$-module of rank $d$. We have natural decompositions $D = \oplus_{i =0}^{f-1} D_{i}$ with $D_i = \ve_i (D) \simeq D \otimes_{K_{0,E} }E _{i}$, and $D_K = \oplus_{i,j} D_{K, ij}$ with $D_{K,ij} = D_K \otimes_{K_E} E_{ij}$.
We always use $\{m_{ij}\}$ to denote a multi-set of integers where the indices $0 \le i \le f-1$ and $0 \le j \le e-1$.
For a given $\{m_{ij}\}$, we define
\[ \Fil ^{\{m_{ij}\}} D_K := \bigoplus_{i=0}^{f-1}
\bigoplus_{j=0}^{e-1} \Fil ^{m_{ij}}D_{K, ij}\subset D_K,
\]
where $\Fil ^{m_{ij}}D_{K, ij}$ is the filtration of $D_{K, ij}$ (with respect to the embedding $\kappa_{ij}: K \to E$).
If $m_{ij} = m$ for all $i,j$ then of course $\Fil ^{\{m_{ij}\}}D_K = \Fil ^m D_K$.

Let $\D := S \otimes_{W(k)} D$ be the Breuil module attached to
$D$, we have a natural isomorphism  $ D_K \simeq
\D \otimes_{S,f_{\pi}} \O_K$. We also have a natural
isomorphism  $D_{K, ij} \simeq \D \otimes_{S_{E}, f_{ij}} E_{ij}$. We
again denote  the projection  $\D \twoheadrightarrow D_K$ by $f_\pi$,
and the projection $\D \twoheadrightarrow D_K \twoheadrightarrow D_{K,
  ij}$ by $f_{ij}$.
For a given  $\{ m_{ij}\} $, we can inductively define a filtration $\Fil ^{\{m_{ij}\}} \D\subseteq \D$. We first set $\Fil ^{\{m_{ij}\}} \D = \D$ if $m_{ij} \leq 0$ for all $i,j$. Then define
\begin{equation*}
\Fil ^{\{m_{ij}\}} \D =\{x \in \D \, : \,f_{ij} (x) \in \Fil
^{m_{ij}}D_{K, ij}, \forall i,j, \text{ and }  N(x) \in \Fil
^{\{m_{ij}-1\}}\D    \}.
\end{equation*}
Note that evidently $\Fil ^m \D= \Fil
^{\{m_{ij}\}}\D$ when  $m_{ij} = m$ for all $i,j$.

Recall that  $ S _{E}\simeq \prod_{x
  =0}^{f-1}  S_{E,x}$, so $\D = \oplus_{x = 0}^{f-1} \D_x$
with $\D_x = \D \otimes_{S_{E}} S_{E,x}$.
We also have $\fijd =
\oplus_{x  =0} ^ {f-1}\Fil ^{\{m_{ij}\}} \D_{x  } $ with $\Fil
^{\{m_{ij}\}} \D_x = \fijd \otimes_{S_E} {S_{E,x}}$.
Note that  $ N(\D_x) \subset \D_{x}$ (because $N$ is $E$-linear on
$D$). So we see easily that $ \Fil ^{\{m_{i  j}\}}\D_x$ depends only on the $m_{x
  j}$ for $0 \le j \le e-1$, and  we can define
$ \Fil^{\{m_{x,0},\ldots,m_{x,e-1}\}} \D_{x} := \Fil ^{\{m_{i
    j}\}}\D_x$.  Note that $
\Fil^{\{m_{i,0},\ldots,m_{i,e-1}\}} \D_{i} $ also has the
recursive description
$$ \{x \in \D_i \, : \,
f_{i j} (x) \in \Fil ^{m_{i j}}D_{K, i j}, \forall j, \text{ and } N(x) \in \Fil^{\{m_{i,0}-1,\ldots,m_{i,e-1}-1\}}\D_i \}.$$

Recall that $\M$ (the Kisin module associated to a lattice in $V$) is an object in $\Mod_{\huaS_{\mathcal O_E}}^{\varphi}$,
which is a  finite free
$\huaS_{\O_E}$-module with rank $d = \dim_{E} V$, together with an $\O_E$-linear
$\varphi$-semilinear map $\varphi : \M \to \M$ such that the cokernel
of $1 \otimes \varphi : \huaS \otimes_{\varphi,\huaS} \M \to \M$ is killed
by $E(u)^r$. (We refer the readers to \cite[\S 1]{Gao15} for precise definitions and various properties of the categories $\Mod_{\huaS_{\mathcal O_E}}^{\varphi}$, $\Mod_{\huaS_{k_E}}^{\varphi}$, $\Mod_{\huaS_{\mathcal O_E}}^{\varphi, \Ghat}$, and $\Mod_{\huaS_{k_E}}^{\varphi, \Ghat}$.) Set $\M^* = \huaS \otimes_{\varphi, \huaS}\M$,
which can be viewed as a subset of $\D$.
Define
$\Fil ^{\{m_{ij}\}}\M^* := \M ^* \cap \fijd,$
and  $M_{K, ij}:= f_{ij}(\M^*) \subset D_{K, ij}$.  Similarly we
define $\M^*_i = \M^* \otimes_{\huaS_E} \huaS_{E,i}$ and
$\Fil^{\{m_{i,0},\ldots,m_{i,e-1}\}} \M^*_i := \M^*_i \cap \Fil^{\{m_{i,0},\ldots,m_{i,e-1}\}} \D_i.$

There is also filtration on $M_{K, ij}$ where $\Fil^m M_{K, ij}:=M_{K, ij} \cap \Fil^m D_{K, ij}$. We note that the sequence $\Fil^m M_{K, ij}$ is a sequence of finite free $\O_E$-modules, and it is a saturated sequence (i.e., the graded pieces are also finite free over $\O_E$).

\subsubsection{Some other notations.}
In this paper, we frequently use boldface letters (e.g., $\bolde$) to mean a sequence of objects (e.g., $\bolde=(e_1, \ldots, e_d)$ a basis of some module). We use $\Mat_d(?)$ to mean the set of matrices with elements in $?$, and $\GL_d(?)$ the set of invertible matrices. We use notations like $[u^{r_1}, \ldots, u^{r_d}]$ to mean a diagonal matrix with the diagonal elements in the bracket. We use $Id$ to mean the identity matrix. For a matrix $A$, we use $\diag A$ to mean the diagonal matrix formed by the diagonal of $A$.

In this paper, \textbf{upper triangular} always means successive extension of rank-$1$ objects.
We use notations like $\mathcal E(m_d, \dots, m_1)$ (note the order of objects) to mean the set of all upper triangular extensions of rank-1 objects in certain categories. That is, $m$ is in $\mathcal E(m_d, \dots, m_1)$ if there is an increasing filtration $0=\Fil^0 m \subset \Fil^1 m \subset \ldots \subset \Fil^d m =m$ such that $\Fil^i m /\Fil^{i-1}m =m_i, \forall 1 \leq i \leq d$.

We normalize the Hodge-Tate weights so that $\HT_{\kappa}(\varepsilon_p)={1}$ for any $\kappa: K \to \overline{K}$, where $\varepsilon_p$ is the $p$-adic cyclotomic character.

We fix a system of elements $\{\mu_{p^n}\}_{n=0}^{\infty}$ in $\barK$, where $\mu_1=1$, $\mu_p$ is a primitive $p$-th root of unity, and $\mu_{p^{n+1}}^p=\mu_{p^n}, \forall n$. We also fix a system of elements $\{\pi_n\}_{n=0}^{\infty}$ in $\barK$ where $\pi_0=\pi$ is the fixed uniformizer of $K$, and $\pi_{n+1}^p=\pi_n, \forall n$.
Let
$K_{p^{\infty}} = \cup_{n=0}^{\infty} K(\mu_{p^n})$, $\hat{K}=K_{\infty, p^{\infty}} = \cup_{n=0}^{\infty} K(\pi_n, \mu_{p^n}).$
Note that $\hat{K}$ is the Galois closure of $K_{\infty}$. Let
$\Ghat =\Gal(\hat{K}/K)$, $H_K= \Gal(\hat{K}/K_{\infty})$, and $G_{p^{\infty}} = \Gal(\hat{K}/K_{p^{\infty}}).$
When $p>2$, then $\hat G \simeq G_{p^{\infty}} \rtimes H_K$ and $G_{p^{\infty}} \simeq \Zp(1)$, and so we can (and do) fix a topological generator $\tau$ of $G_{p^{\infty}}$. And we can furthermore assume that $\mu_{p^n}=\frac{\tau(\pi_n)}{\pi_n}$ for all $n$.

\section{Kisin modules associated to crystalline representations} \label{section: filtration of kisin modules}

\newcommand{\e}{\mathfrak{e}}
\newcommand{\f}{\mathfrak{f}}
\newcommand{\boldfrake}{\boldsymbol{\mathfrak{e}}}

In this section, when $K/\Qp$ ($p>2$) is tamely ramified, we study the shape of Kisin modules $\huaM$, which are associated to $\O_E$-lattices in crystalline representations with certain technical restrictions on Hodge-Tate weights.
The method is to study the filtration structure of $\huaM^*$, which is a generalization of \cite[\S 2]{GLS15}.
In the second subsection, we obtain some results about rank-1 $(\varphi, \hat G)$-modules.
\subsection{Kisin modules: filtration structures and adapted bases}

\begin{defn}
Let $0 = r_1 < \dots< r_d$ be a sequence of integers. Let $M$ be a finite free module over a ring $R$, and let $\Fil^* M=\{\Fil^{r_x} M \}_{x=1}^d$ be a decreasing filtration of $M$ by finite free $R$-submodules. Let $m_1, \ldots, m_d$ be some elements in $M$, we say that the ordered sequence $\{m_x\}_{x=1}^d=\{m_1, \ldots, m_d\}$ \emph{fully generates} $\Fil^* M$ over $R$ if
\begin{itemize}
  \item $m_x \in \Fil^{r_x} M, \forall x$, and
  \item $\Fil^{r_x} M =\oplus_{y=x}^d Rm_y, \forall x$.
\end{itemize}
\end{defn}

\newcommand{\rr}{\mathfrak{r}}
\newcommand{\x}{_{x=1}^d}
\newcommand{\0}{\underline{0}}

\begin{notation} \label{notation: filtration superscript}
With notations in (CRYS), fix one $0 \leq i \leq f-1$ and fix one $0 \leq j \leq e-1$. We use $\rr$ to denote the sequence $r_{i,0,d}, \ldots, r_{i,j-1,d}$, and we use $\0$ to denote the sequence $0, \ldots, 0$ (with $e-1-j$ count of 0). This will simplify the filtration superscripts.
For example, we have
$\Fil^{\{\rr, n, \0\}} =\Fil^{\{r_{i,0,d}, \ldots, r_{i,j-1,d}, n, 0, \ldots, 0\}}$.
\end{notation}

\begin{prop}\label{fully_generates}
With notations in (CRYS), fix $i, j$ and use Notation \ref{notation: filtration superscript}. If there exists $(\e_{i,j,x}')_{x=1}^d$ such that
\begin{enumerate}
  \item $\e_{i,j,x}' \in  \Fil^{\{\rr, r_{i,j,x}, \0\}} \M_i^*= \Fil^{\{r_{i,0,d}, \ldots, r_{i,j-1,d}, r_{i,j,x},0,\ldots,0  \}}\M_i^*$;
  \item $\Fil^{\{\rr, 0, \0\}}\M_i^* =\oplus_{x=1}^d \huaS_{\O_E, i} \e_{i,j,x}'$;
  \item $\{f_{ij}(\e_{i,j,x}')\}_{x=1}^d$ fully generates $\{\Fil ^{r_{i,j, x}} D_{K, ij}\}_{x=1}^d$ over $E$.
\end{enumerate}
Then for any $n\geq r_{i,j, d}$ we have
$$\Fil^{\{\rr, n, \0\}}\M^*_i = \bigoplus_{x=1}^{d} \huaS_{\O_E,i} (u - \pi_{ij}) ^{n-r_{i, j,x}}\e_{i,j, x}'.$$
\end{prop}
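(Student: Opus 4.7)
\emph{Proof proposal.} The strategy is to prove both inclusions, with the reverse direction by induction on $n \ge r_{i,j,d}$. Write $L := \huaS_{\O_E, i}$ and set $F_n := \Fil^{\{\rr, n, \0\}}\M_i^*$ and $G_n := \bigoplus_{x=1}^d L \cdot (u-\pi_{ij})^{n - r_{i,j,x}}\e_{i,j,x}'$.

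For $G_n \subseteq F_n$, I verify that each generator $(u-\pi_{ij})^{n-r_{i,j,x}}\e_{i,j,x}'$ satisfies the recursive definition of $\Fil^{\{\rr,n,\0\}}\D_i$. At positions $j' < j$: since $f_{i,j'}(u-\pi_{ij}) = \pi_{i,j'} - \pi_{ij} \in E^\times$, the $f_{i,j'}$-image is a unit multiple of $f_{i,j'}(\e_{i,j,x}') \in \Fil^{r_{i,j',d}} D_{K,i,j'}$ (hypothesis (1)). At position $j$: $f_{ij}$ kills $(u-\pi_{ij})^k$ for $k \ge 1$; the only surviving term (when $n = r_{i,j,x}$, forcing $x=d$) has $f_{ij}(\e_{i,j,d}') \in \Fil^{r_{i,j,d}} = \Fil^n$ by hypothesis (3). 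At positions $j' > j$: trivial. The $N$-condition follows by applying the Leibniz rule to $N((u-\pi_{ij})^{n-r_{i,j,x}}\e_{i,j,x}')$, using $N(\e_{i,j,x}') \in \Fil^{\{\rr-1, r_{i,j,x}-1, \0-1\}}\D_i$, and descending the recursive definition.

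For $F_n \subseteq G_n$, I induct on $n$. \emph{Base case} $n = r_{i,j,d}$: an element $\xi \in F_{r_{i,j,d}} \subseteq \Fil^{\{\rr,0,\0\}}\M_i^*$ admits a unique expansion $\xi = \sum_{x} c_x \e_{i,j,x}'$ with $c_x \in L$ by hypothesis (2). Applying $f_{ij}$ and using full generation (3), the condition $f_{ij}(\xi) \in \Fil^{r_{i,j,d}} D_{K,ij}$ forces $f_{ij}(c_x) = 0$ for $x < d$, i.e., $(u-\pi_{ij}) \mid c_x$. To reach higher orders of divisibility, I apply $N$ iteratively, using $N^k(\xi) \in \Fil^{\{\rr-k, r_{i,j,d}-k, \0-k\}}\D_i$ and composing with $f_{ij}$ at each level to extract Taylor-type coefficients of the $c_x$ at $u = \pi_{ij}$; an inner induction on the order of vanishing then yields $(u-\pi_{ij})^{r_{i,j,d} - r_{i,j,x}} \mid c_x$. \emph{Inductive step} $n \to n+1$: take $\xi \in F_{n+1} \subseteq F_n = G_n$ and write uniquely $\xi = \sum_x b_x (u-\pi_{ij})^{n-r_{i,j,x}} \e_{i,j,x}'$ with $b_x \in L$. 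It suffices to show $(u-\pi_{ij}) \mid b_x$ for all $x$. The $f_{ij}$-condition at level $n+1$ gives this for the unique index $x_0$ (if any) with $r_{i,j,x_0} = n$. After subtracting the $x_0$-term, I apply $N$ and then $f_{ij}$: a bookkeeping computation using $f_{ij}(u) = \pi_{ij}$ and $f_{ij}((u-\pi_{ij})^m) = 0$ for $m \ge 1$ isolates the index $x_1$ with $r_{i,j,x_1} = n - 1$, forcing $(u-\pi_{ij}) \mid b_{x_1}$. Iterating $N$ picks up the divisibility for each remaining $b_x$ in turn.

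\emph{Main obstacle.} The $N$-computation mixes contributions from $N(b_x)$, from $N((u-\pi_{ij})^{n-r_{i,j,x}}) = \pm(n-r_{i,j,x})\,u\,(u-\pi_{ij})^{n-r_{i,j,x}-1}$, and from $N(\e_{i,j,x}')$, which is controlled only by its recursive filtration condition rather than by any explicit formula. Disentangling these contributions so that each iteration of $N$ cleanly isolates one new index, while also propagating the information correctly at the auxiliary positions $j' \ne j$, is the delicate step, and is where the higher-dimensional setting goes beyond the rank-$2$ arguments in \cite{GLS15}.
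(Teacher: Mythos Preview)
Your approach is genuinely different from the paper's, and the obstacle you flag at the end is a real gap, not just a technicality. In the $F_n \subseteq G_n$ direction, once you apply $N$ and the Leibniz rule, the terms $N^\ell(\e_{i,j,x}')$ are controlled only by $N^\ell(\e_{i,j,x}') \in \Fil^{\{\rr-\ell,\, r_{i,j,x}-\ell,\, \0-\ell\}}\D_i$, so $f_{ij}(N^\ell(\e_{i,j,x}'))$ lands in $\Fil^{r_{i,j,x}-\ell} D_{K,ij}$, which for small $x$ is a large subspace. These images are linear combinations of \emph{all} $f_{ij}(\e_{i,j,y}')$ with $y\ge$ some bound, and they contaminate the equations for the other coefficients; there is no triangularity that lets you peel off one index per iteration of $N$. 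Your inductive step $n\to n+1$ has the same problem. So as written the argument does not close.

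The paper avoids this entirely by changing the ambient module. It first proves, for $0 \le n \le r_{i,j,d}$, the analogue on the Breuil-module side:
\[
\Fil^{\{\rr, n, \0\}}\D_i \;=\; \bigoplus_{x=1}^{d} \huaS_{\O_E,i}\,(u - \pi_{ij})^{\lceil n-r_{i,j, x}\rceil}\e_{i,j,x}' \;+\; (\Fil^p S_{E,i})\,\D_i,
\]
not by unwinding $N$-iterates but by invoking the abstract uniqueness criterion \cite[Prop.~2.1.12]{GLS15}: one only has to check that the right-hand side is $N$-stable (down one level), has the correct image under $f_\pi$, and absorbs $E^{\kappa_i}(u)$ times the previous level. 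These checks use hypotheses (1)--(3) directly and no disentangling is needed. One then reduces to $n=r_{i,j,d}$ via \cite[Lem.~2.2.1(3)]{GLS15} and finally intersects with $\M_i^*$, using $\Fil^r S_{E,i}\cap \huaS_{\O_E,i} = (E^{\kappa_i}(u))^r\,\huaS_{\O_E,i}$. The missing idea in your proposal is exactly this passage to $\D_i$ (where the $(\Fil^p S)\D_i$ cushion and the axiomatic characterization are available) before cutting back to $\M_i^*$.
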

\begin{proof}
This is a generalization of \cite[Prop. 2.3.3]{GLS15}, and the proof is similar, so we only give a sketch. However, we want to point out an \emph{important observation} here. In \textit{loc. cit.}, it would require (when $j=0$) that $\{f_{ij}(\e_{i,j,x}')\}_{x=1}^d$ to fully generate the filtration of $\Fil^* M_{K,ij}$ over $\O_E$. However, it is not necessary at all. All we need is that $\{f_{ij}(\e_{i,j,x}')\}_{x=1}^d$ fully generates the filtration of $\Fil^* D_{K,ij}$ over $E$.

Firstly, we prove by induction that for any $0 \leq n \leq r_{i,j,d}$, we have
\numequation
\label{Ddecomp}
\Fil^{\{\rr, n, \0\}}\D_i = \bigoplus_{x=1}^{d} \huaS_{\O_E,i} (u - \pi_{ij}) ^{\lceil n-r_{i,j, x}\rceil}\e_{i,j,x}'+ (\Fil^p S_{E,i}) \D_i.
\end{equation}
Here, the notation $\lceil a \rceil$ is such that $\lceil a \rceil=a$ if $a\geq 0$ and $\lceil a \rceil=0$ if $a<0$.

The case $n=0$ is trivial. Suppose the Statement \eqref{Ddecomp} is valid for $n-1$, and consider it for $n$. Now similarly as in \cite[Prop. 2.3.3]{GLS15}, we set:
\begin{itemize}[leftmargin=*]
  \item $\widetilde \Fil^{\{\rr, n, \0\}}\D_i :=$ right hand side of \eqref{Ddecomp}, and
  \item $\widetilde \Fil ^{\{m_0, \ldots, m_j,\0\}}\D_i:=\Fil ^{\{m_0, \ldots, m_j, \0\}}\D_i$ for $\{m_0, \ldots, m_j, \0\}< \{\rr, n, \0\}$ in dictionary order.
\end{itemize}

We can easily check that $\widetilde \Fil$ satisfies all the conditions of  \cite[Prop. 2.1.12]{GLS15}, in particular, we can check that
\begin{itemize}[leftmargin=*]
  \item $N(\widetilde \Fil ^{\{m_0, \ldots, m_j, \0\}}\D_i) \subseteq \widetilde \Fil ^{\{m_0-1, \ldots, m_j-1,\0\}}\D_i$, because $\e_{i,j,x}' \in \Fil^{\{\rr, r_{i,j,x}, \0\}}\M_i^*$.

  \item $f_{\pi}(\widetilde \Fil^{\{\rr, n, \0\}}\D_i) =\left( \oplus_{y=0}^{j-1} \Fil^{r_{i,y,d}}D_{K, iy} \right)\oplus \Fil^{n}D_{K, ij} \oplus \left( \oplus_{y=j+1}^d  \Fil^0 D_{K, iy} \right)$, by using the assumption that $\{f_{ij} (\e_{i,j, x}')\}_{x=1}^d$ fully generates $\{\Fil ^{r_{i, j,x}} D_{K, ij}\}_{x=1}^d$.

  \item $E^{\kappa_i}(u)\widetilde \Fil ^{\{\rr-1, n-1, \0\}}\D_i \subseteq \Fil ^{\{\rr, n, \0\}}\D_i$. Indeed, we have
  \begin{eqnarray*}
        &&E^{\kappa_i}(u)\widetilde \Fil ^{\{\rr-1, n-1, \0\}}\D_i \\ &=&\left(\prod_{y=j+1}^{e-1}(u-\pi_{iy})\right)(u-\pi_{ij})\left(\prod_{y=0}^{j-1}(u-\pi_{iy})\right)\widetilde \Fil ^{\{\rr-1, n-1, \0 \}}\D_i \\
         &\subseteq&  (u-\pi_{ij})\widetilde \Fil ^{\{\rr,n-1,\0 \}}\D_i\\
        &\subseteq&   \widetilde \Fil ^{\{\rr,n,\0 \}}\D_i (\textnormal{by using \eqref{Ddecomp} for } n-1).
      \end{eqnarray*}
\end{itemize}
So now by \cite[Prop. 2.1.12]{GLS15}, we must have
$\widetilde \Fil^{\{\rr,n,\0 \}}\D_i=\Fil^{\{\rr,n,\0 \}}\D_i$, and so \eqref{Ddecomp} is proved.

Now, by \cite[Lem. 2.2.1(3)]{GLS15}, to prove our proposition, it suffices to prove it for $n=r_{i,j,d}$. This can be easily achieved by the similar argument as the last paragraph of \cite[Prop. 2.3.3]{GLS15}. (Note that there is a minor error in \textit{loc. cit.}, namely, $\Fil^r S_{E, i} \cap \huaS_{\O_{E}, i}$ should be equal to $\left(E^{\kappa_i}(u)\right)^r \huaS_{\O_{E}, i}$, not $(u-\pi_{i0})^{r}\huaS_{\O_{E}, i}$).
\end{proof}

Now we \emph{verify} the assumptions in the above proposition. First, we state the $j=0$ case in Proposition \ref{level0base} (with something extra), and then the general case in Proposition \ref{existbase}.
\begin{prop}\label{level0base} With notations in (CRYS).
\begin{enumerate}
  \item There exists a basis $(e_{i-1, 0, x})_{x=1}^d$ of $\M_{i-1}$, such that $\{f_{i, 0} (e_{i-1, 0, x}) \}_{x=1}^d$ fully generates $\{\Fil^{r_{i, 0, x}}M_{K, i, 0}\}_{x=1}^d$ over $\O_E$.
  \item There exists $(\e_{i,0,x}')_{x=1}^d$ in $\M^*_i$, such that
  \begin{enumerate}
    \item $\e_{i,0,x}'-e_{i-1, 0, x} \in \omega_E \M^*_i, \forall x$.
    \item  $\e_{i,0,x}' \in \Fil^{\{r_{i, 0, x}, 0, \ldots, 0\}} \M^*_i, \forall x$, and $(\e_{i,0,x}')_{x=1}^d$ form a basis of $\M^*_i$.
    \item $\{f_{i, 0}(\e_{i,0,x}')\}_{x=1}^d$ fully generates $\{\Fil^{r_{i, 0, x}}D_{K, i 0}\}_{x=1}^d$ over $E$.
  \end{enumerate}
\end{enumerate}
\end{prop}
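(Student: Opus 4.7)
My plan is to prove (1) and (2) in sequence; (1) is essentially a change-of-basis driven by elementary divisor theory, while (2) is a filtration-adapted lifting that exploits the crystalline compatibility of Kisin and Breuil filtrations.

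For part (1), I would start with any basis $(f_x)_{x=1}^d$ of $\M_{i-1}$. The induced family $(1\otimes f_x)$ is an $\huaS_{\O_E,i}$-basis of $\M_i^* = \huaS_{\O_E,i}\otimes_{\varphi,\huaS_{\O_E,i-1}}\M_{i-1}$, and reducing modulo $(u-\pi_{i,0})$ produces an $\O_E$-basis $\bar f_x := f_{i,0}(1\otimes f_x)$ of $M_{K,i,0}$. (Nakayama applies because $v_p(\pi_{i,0})>0$, so $(u-\pi_{i,0})$ lies in the maximal ideal $(\omega_E,u)$ of $\huaS_{\O_E,i}$.) Since $\Fil^\bullet M_{K,i,0}$ is saturated, elementary divisor theory yields $C\in\GL_d(\O_E)$ such that $\bar b_x := \sum_y \bar f_y C_{yx}$ fully generates $\{\Fil^{r_{i,0,x}} M_{K,i,0}\}_{x=1}^d$. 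Because the entries of $C$ lie in $\O_E$ and are therefore Frobenius-invariant, $e_{i-1,0,x} := \sum_y f_y C_{yx}$ is a basis of $\M_{i-1}$ whose image under $f_{i,0}\circ(1\otimes -)$ is exactly $\bar b_x$, establishing (1).

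For part (2), I would take $b_x := 1\otimes e_{i-1,0,x}\in\M_i^*$ as the first guess and search for $\e'_{i,0,x}\in b_x + \omega_E\M_i^*$ lying in $\Fil^{\{r_{i,0,x},0,\ldots,0\}}\M_i^*$. Granting such a lift, the remaining two clauses of (2) follow formally: (2b) is Nakayama, since $\e'_{i,0,x}\equiv b_x\pmod{\omega_E}$ still reduces to a basis of $\M_i^*/\omega_E$; and (2c) follows because $f_{i,0}(\e'_{i,0,x})\equiv\bar b_x$ modulo $\omega_E \Fil^{r_{i,0,x}}M_{K,i,0}$, so remains an $\O_E$-basis of the saturated filtration on $M_{K,i,0}$, which after inverting $\omega_E$ yields the desired $E$-generation of $\Fil^\bullet D_{K,i,0}$.

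The main obstacle is therefore producing the filtration-compatible lift. Unwinding the recursive definition and using $N_D=0$ in the crystalline case (so that $N_{\D}$ acts purely through $-u\partial_u$ on the $S$-factor), the containment reduces to the conditions $f_{i,0}(N^k\e'_{i,0,x})\in\Fil^{r_{i,0,x}-k}D_{K,i,0}$ for $1\le k<r_{i,0,x}$. My approach is inductive: correct $b_x$ by successively deeper elements of $\omega_E\M_i^*$ to meet these conditions one depth at a time. The bound $\sum_j r_{i,j,d}\le p$ keeps us in a Fontaine--Laffaille-type range where the Kisin filtration $\Fil^{\{r_{i,0,x},0,\ldots,0\}}\M_i^*$ is known to surject onto the $D$-filtration $\Fil^{r_{i,0,x}}M_{K,i,0}$, exactly as in the $d=2$ work of \cite{GLS15}; the new ingredient here is keeping track of all $d$ weights simultaneously while ensuring that no correction term disrupts the congruence $\e'_{i,0,x}\equiv b_x\pmod{\omega_E}$. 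I expect this combinatorial bookkeeping to constitute the bulk of the work, with the key technical input being the saturation of $\Fil^{\{r_{i,0,x}-k,-k,\ldots,-k\}}\D_i$ inside $\D_i$ at each stage of the recursion.
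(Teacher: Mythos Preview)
Your outline is essentially correct and matches the approach the paper defers to (namely \cite[Prop.~2.3.5]{GLS15}, of which this is the higher-dimensional analogue). Part (1) via elementary divisors on the saturated filtration of $M_{K,i,0}$ is exactly right.

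For Part (2), two points need sharpening. First, the bound $\sum_j r_{i,j,d}\le p$ is \emph{not} a hypothesis of this proposition; it enters only later in Proposition~\ref{existbase} for $j\ge 1$. Here the sole constraint is $r_{i,0,d}\le p$, already built into (CRYS), and that is all the $j=0$ argument requires. Second, the mechanism that makes the inductive correction work is not an abstract ``surjection of the Kisin filtration onto $\Fil^{r_{i,0,x}}M_{K,i,0}$'' nor ``saturation of $\Fil^{\{r_{i,0,x}-k,\ldots\}}\D_i$''. Rather, one constructs the lift explicitly: set $H(u)=(u-\pi_{i,0})/\pi_{i,0}$, form $\tilde\f^{(n+1)}=\sum_{\ell=0}^n H^\ell N^\ell(\f^{(n)})/\ell!$, verify it lies in $\Fil^{\{n+1,0,\ldots,0\}}\D_i$, and then truncate to land back in $\M_i^*$. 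The key technical input is the estimate $N^\ell(\M_i^*)\subset\mathcal I_\ell\,\M_i^*$ of \cite[Cor.~4.11]{GLS14}; combined with the shape of $H$, this forces every correction term to be divisible by $\pi_{i,0}$, hence by $\omega_E$, which is precisely what preserves the congruence $\e'_{i,0,x}\equiv b_x\pmod{\omega_E}$. Once you replace your ``surjectivity/saturation'' heuristic with this explicit construction, the argument goes through with no new combinatorial difficulty beyond the $d=2$ case.
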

\begin{proof}
This is easy generalization (from dimension 2 to higher dimension) of \cite[Prop. 2.3.5]{GLS15}. Note that Item (2)(c) is actually an easy consequence of (2)(a). We still list it just to emphasize it.
\end{proof}

Before we state our next proposition, we introduce the following definition and lemma.

\begin{defn} \label{defn: property B}
Let $\pi$ be an element in the maximal ideal of $\O_E$, and $v_{\pi}$ the valuation of $E$ such that $v_{\pi}(\pi)=1$.
A polynomial $f(u) \in E[u]$ is said to satisfy (Property B) with respect to $v_{\pi}$, if when written as $f(u) = \sum_{i=0}^N a_i(u-\pi)^i$, we have $v_{\pi}(a_i) \geq -i, \forall i$.
\end{defn}

\begin{lemma}\label{lemma: property B} \hfill
\begin{enumerate}
  \item If $f_1, f_2 \in E[u]$ satisfy (Property B), then so is $f_1f_2$.
  \item With notations in Subsection \ref{subsection: decom of rings}, suppose $p\nmid e$ (i.e., $K$ is tamely ramified). Let $0 \leq q <j \leq e-1$, then $\frac{u-\pi_{iq}}{\pi_{ij}-\pi_{iq}}$ satisfies (Property B) with respect to $v_{\pi_{ij}}$.
\end{enumerate}
\end{lemma}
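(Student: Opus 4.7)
The plan is to handle parts (1) and (2) separately; part (1) is a routine manipulation of formal power series, while part (2) reduces to a single valuation estimate whose proof uses the tameness hypothesis $p \nmid e$.

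For part (1), I would expand $f_1 = \sum_i a_i (u-\pi)^i$ and $f_2 = \sum_j b_j (u-\pi)^j$ and compute the product $f_1 f_2 = \sum_k c_k (u-\pi)^k$ with $c_k = \sum_{i+j=k} a_i b_j$. For each pair with $i+j = k$ we have $v_\pi(a_i b_j) = v_\pi(a_i) + v_\pi(b_j) \geq -i - j = -k$, and the ultrametric inequality gives $v_\pi(c_k) \geq -k$, as required.

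For part (2), I would first rewrite, using the decomposition $u - \pi_{iq} = (u - \pi_{ij}) + (\pi_{ij} - \pi_{iq})$,
\[
\frac{u - \pi_{iq}}{\pi_{ij} - \pi_{iq}} = 1 + \frac{1}{\pi_{ij} - \pi_{iq}}(u - \pi_{ij}).
\]
So the $(u-\pi_{ij})$-expansion has $a_0 = 1$, $a_1 = (\pi_{ij} - \pi_{iq})^{-1}$, and $a_k = 0$ for $k \geq 2$. Property B thus reduces to the single inequality $v_{\pi_{ij}}(\pi_{ij} - \pi_{iq}) \leq 1$.

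The heart of the lemma is then to show $v_{\pi_{ij}}(\pi_{ij} - \pi_{iq}) = 1$ for every $q \neq j$, which is exactly where the assumption $p \nmid e$ enters. I would deduce this from the identity $(E^{\kappa_i})'(\pi_{ij}) = \prod_{q' \neq j}(\pi_{ij} - \pi_{iq'})$ by computing both sides' valuations. Since $E(u)$ is Eisenstein of degree $e$ and $p \nmid e$, the leading contribution $e \pi_{ij}^{e-1}$ of $(E^{\kappa_i})'(\pi_{ij})$ has $v_{\pi_{ij}}$-valuation exactly $e-1$, while every other term $k \kappa_i(a_k) \pi_{ij}^{k-1}$ with $1 \leq k \leq e-1$ has valuation $\geq e$ since $v_{\pi_{ij}}(\kappa_i(a_k)) \geq e$ by Eisensteinness. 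Hence $\sum_{q' \neq j} v_{\pi_{ij}}(\pi_{ij} - \pi_{iq'}) = e-1$. On the other hand, each $\pi_{iq'}$ is a Galois conjugate of $\pi$ and so has the same $p$-adic absolute value as $\pi_{ij}$, i.e.\ $v_{\pi_{ij}}(\pi_{iq'}) = 1$, whence $v_{\pi_{ij}}(\pi_{ij} - \pi_{iq'}) \geq 1$ by the ultrametric inequality. With $e-1$ summands each at least $1$ summing to $e-1$, equality must hold in each, completing the argument.
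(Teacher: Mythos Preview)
Your proof is correct and follows essentially the same approach as the paper: both argue part (1) by direct expansion and the ultrametric inequality, and both reduce part (2) to the valuation identity $v_{\pi_{ij}}(\pi_{ij}-\pi_{iq})=1$, proved by differentiating $E^{\kappa_i}(u)=u^e+pF(u)$, evaluating at $\pi_{ij}$, and using $p\nmid e$ to see that $(E^{\kappa_i})'(\pi_{ij})$ has valuation exactly $e-1$. Your write-up simply makes explicit the steps the paper leaves to the reader (one small remark: the $\pi_{iq'}$ are images of $\pi$ under the various embeddings $\kappa_{iq'}$ rather than literal Galois conjugates, but the conclusion $v_{\pi_{ij}}(\pi_{iq'})=1$ is of course unchanged).
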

\begin{proof}
(1) is easy.
To prove (2), first consider $E^{\kappa_i}(u)=\prod_{q=0}^{e-1}(u-\pi_{iq}) =u^e +pF(u)$. Take derivative, evaluate at $\pi_{ij}$, and consider $v_{\pi_{ij}}$ on both sides, then one can easily see that $v_{\pi_{ij}}(\pi_{ij} -\pi_{iq})=1, \forall q\neq j$ (using that $p \nmid e$). Since $\frac{u-\pi_{iq}}{\pi_{ij}-\pi_{iq}} =1+  \frac{u-\pi_{ij}}{\pi_{ij}-\pi_{iq}} $, we can conclude (2).
\end{proof}

\begin{prop}\label{existbase}
With notations in (CRYS), fix $i, j$ and use Notation \ref{notation: filtration superscript}.
Suppose $p \nmid e$ (i.e., $K$ is tamely ramified), and assume that
\begin{itemize}
  \item (A0): For each $i$, $\sum_{q=0}^{e-1} r_{i, q, d} \leq p$.
\end{itemize}
Then there exists $(\e_{i,j,x}')_{x=1}^d$ such that
\begin{enumerate}
  \item $\e_{i,j,x}' \in \Fil^{\{\rr, r_{i,j,x},\0  \}}\M_i^*$, and $\Fil^{\{\rr, 0,\0  \}}\M_i^*=\oplus_{x=1}^d \huaS_{\O_{E}, i}\e_{i,j,x}'$.
  \item $\{f_{ij}(\e_{i,j,x}')\}_{x=1}^d$ fully generates $\{\Fil ^{r_{i,j, x}} D_{K, ij}\}_{x=1}^d$ over $E$.
  \item $\Fil^{\{\rr, r_{i,j,d},\0  \}}\M_i^* =\oplus_{x=1}^d  \huaS_{\O_{E}, i}\alpha_{i, j, x}'$ where $\alpha_{i, j, x}'= (u-\pi_{ij})^{r_{i, j, d}-r_{i, j, x}}\e_{i, j, x}'$.
  \item We have $(\e'_{i, j, x})_{x=1}^d=(\e'_{i, j-1, x})_{x=1}^d [(u-\pi_{i, j-1})^{r_{i, j-1, d}-r_{i, j-1, x}}]_{x=1}^d Z'_{i, j}$, where $Z'_{i, j}=B_{i, j}A_{i, j}$ for some $B_{i, j} \in \GL_d(\O_E)$ and $A_{i,j} \in \GL_d(\O_E\llb u \rrb)$ such that $A_{i, j}\equiv Id\pmod{ \omega_E}$.
\end{enumerate}
\end{prop}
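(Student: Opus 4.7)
The plan is to induct on $j$. The base case $j=0$ follows directly from Proposition~\ref{level0base}: the ``$\e' \in \Fil$'' and spanning parts of (1) come from (2)(b), condition (2) comes from (2)(c), condition (3) is obtained by invoking Proposition~\ref{fully_generates} at $j=0$ with $n=r_{i,0,d}$ (whose hypotheses are now in place), and (4) is vacuous.

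For the inductive step ($j \geq 1$), suppose $(\e'_{i,j-1,x})_{x=1}^d$ has been constructed satisfying all four conditions at level $j-1$. Applying Proposition~\ref{fully_generates} at level $j-1$ with $n = r_{i,j-1,d}$ yields
\[
\Fil^{\{\rr,0,\0\}}\M_i^* \;=\; \bigoplus_{x=1}^d \huaS_{\O_E,i}\,\alpha'_{i,j-1,x}, \qquad \alpha'_{i,j-1,x} := (u-\pi_{i,j-1})^{r_{i,j-1,d}-r_{i,j-1,x}}\,\e'_{i,j-1,x}.
\]
I will then define $\e'_{i,j,x}$ as $\sum_y (Z'_{i,j})_{yx}\,\alpha'_{i,j-1,y}$, where $Z'_{i,j} = B_{i,j}A_{i,j}$ has the form required by (4). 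Invertibility of $Z'_{i,j}$ will deliver the spanning part of (1), and condition (3) will follow by a second application of Proposition~\ref{fully_generates}, this time at level $j$ with $n = r_{i,j,d}$, once (1) and (2) are in hand.

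The constant factor $B_{i,j} \in \GL_d(\O_E)$ is chosen to enforce (2). Setting $\gamma_x := f_{ij}(\alpha'_{i,j-1,x}) = (\pi_{ij}-\pi_{i,j-1})^{r_{i,j-1,d}-r_{i,j-1,x}}\,f_{ij}(\e'_{i,j-1,x})$, the vectors $(\gamma_x)$ form an $E$-basis of $D_{K,ij}$, since $(f_{ij}(\e'_{i,j-1,x}))$ is an $\O_E$-basis of $M_{K,ij}$ and $\pi_{ij} \neq \pi_{i,j-1}$. Fix any $E$-basis $(\beta_x)$ of $D_{K,ij}$ with $\beta_x \in \Fil^{r_{i,j,x}}D_{K,ij}$ fully generating the filtration, and let $M \in \GL_d(E)$ denote the matrix expressing $\gamma$ in the basis $\beta$. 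The Iwasawa decomposition $\GL_d(E) = B(E) \cdot \GL_d(\O_E)$ (for the lower-triangular Borel) produces $M = L K$ with $L$ lower triangular in $\GL_d(E)$ and $K \in \GL_d(\O_E)$; taking $B_{i,j} := K^{-1}$, the matrix $M B_{i,j} = L$ is lower triangular with nonzero diagonal, so $\sum_y (B_{i,j})_{yx}\gamma_y$ lies in $\Fil^{r_{i,j,x}}D_{K,ij}$ and the $d$ resulting vectors fully generate this filtration over $E$, yielding (2).

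The factor $A_{i,j} \equiv Id \pmod{\omega_E}$ is designed to lift this construction so as to achieve the ``$\e' \in \Fil$'' part of (1), namely $\e'_{i,j,x} \in \Fil^{\{\rr,r_{i,j,x},\0\}}\M_i^*$. The difficulty is that this filtration is defined recursively through the $N$-operator: beyond the $f_{ij}$-image condition (already secured by $B_{i,j}$), we need $N(\e'_{i,j,x})$ and its higher iterates to lie in successively deeper filtrations. I plan to build $A_{i,j}$ by $\omega_E$-adic approximation, starting from $A_{i,j}^{(0)} = Id$ and at each stage correcting by a matrix in $\omega_E^n \Mat_d(\O_E\llb u \rrb)$ that kills the next-order obstruction in the $N$-recursion. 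The hard part will be showing that this procedure converges in $\GL_d(\O_E\llb u \rrb)$: I expect $p \nmid e$ to enter through Lemma~\ref{lemma: property B}, absorbing the $(\pi_{ij}-\pi_{iq})^{-1}$ factors that appear when $N$-images are re-expressed in the $\alpha'_{i,j-1,\bullet}$-basis, while assumption (A0) $\sum_q r_{i,q,d} \leq p$ should bound the total filtration depth by $p$ and keep the whole construction inside $\O_E\llb u \rrb$ rather than forcing a passage to the divided-power completion $S_{E,i}$.
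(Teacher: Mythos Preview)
Your inductive framework and the construction of $B_{i,j}$ match the paper's (it cites \cite[Lem.~4.4]{GLS14} rather than Iwasawa, to the same effect; note incidentally that for $j\ge 2$ the $f_{ij}(\e'_{i,j-1,x})$ span only a sublattice of $M_{K,ij}$, not all of it, though this is harmless for your argument). The real gap is in your plan for $A_{i,j}$: an $\omega_E$-adic successive-approximation scheme is not the mechanism that works here. The obstruction to placing $\f_{j-1,x}$ in $\Fil^{\{\rr, r_{i,j,x}, \0\}}$ is the recursive $N$-condition in the definition of $\Fil^{\{\cdots\}}$, and there is no reason for the successive obstructions to decay in increasing powers of $\omega_E$; you have not identified what would force a correction at stage $n$ to be $\omega_E^n$-small.

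The paper instead iterates over the filtration index in the $j$-th slot. Given $\f^{(n)} \in \Fil^{\{\rr, n, \0\}}$, it sets $\tilde\f^{(n+1)} := \sum_{\ell=0}^n \frac{H^\ell}{\ell!}\, N^\ell(\f^{(n)})$, where $H = H_j(u)$ is an explicit polynomial vanishing at the $\pi_{iq}$ for $q\le j$ and satisfying $(u-\pi_{ij}) \mid N(H)+1$ (``Property~A''). This identity makes $N(\tilde\f^{(n+1)})$ telescope into $\Fil^{\{\rr-1, n, \0\}}$, so $\tilde\f^{(n+1)} \in \Fil^{\{\rr, n+1, \0\}}\D_i$. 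Truncating its $(u-\pi_{ij})$-expansion at degree~$n$ brings one back to $\M_i^*$ without leaving the filtration, and the heart of the proof is the estimate that the resulting correction $\f^{(n+1)}-\f^{(n)}$ has the form $\pi_{ij}\,Q_{j-1}\cdot(\text{$\O_E$-integral combination of the $\e'_{i,0,y}$})$. This is where (A0) and Property~B of Lemma~\ref{lemma: property B} are actually used, together with the $\mathcal I_\ell$-bounds for $N^\ell(\M_i^*)$ from \cite[Cor.~4.11]{GLS14} and \cite[Lem.~2.3.9]{GLS15}. Setting $\e'_{i,j,x} := \f^{(r_{i,j,x})}$ and expressing the total correction in the basis $(\f_{j-1,x})$ yields $A_{i,j}$; it is $\equiv Id \pmod{\omega_E}$ because every correction carries a single factor $\pi_{ij}$, but the iteration runs over $n = 1,\dots,r_{i,j,x}$, not over powers of $\omega_E$.
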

\begin{proof}
We prove it by induction on $j$. When $j=0$, this is precisely Proposition \ref{level0base}(2).
Now suppose our proposition is true for $j-1$, and now consider it for $j$. Since we are working with a fixed $i$, so we drop $i$ from all the subscripts.

\textbf{Step 0}.
Let $L_q:  = [(u-\pi_q)^{r_{q, d} -r_{q, x} }  ]_{x=1}^d, \forall 0 \leq q \leq e-1$ be the diagonal matrices, then we have
$$ (\alpha'_{j-1, x})_{x=1}^d  =  (\e_{j-1, x}')_{x=1}^d  L_{j-1} =  ( \e_{0, x}')_{x=1}^d  \left( \prod_{q=0}^{j-2} (L_qZ_{q+1}') \right) L_{j-1} .$$
Note that the $\O_E$-linear span of $\{f_j(\e'_{0, x})\}\x$ is equal to $f_j(\M^*_i)=M_{K, ij}$, which is a lattice in $D_{K, j}$.
Now $f_j (u -\pi_q)=\pi_j -\pi_q \in \O_E, \forall 0 \leq q \leq j-1$, and $Z'_{q+1} \in \GL_d(\O_E\llb u \rrb), \forall 0 \leq q \leq j-2$,
so it is easy to see that the $\O_E$-linear span of $\{f_j(\alpha'_{j-1, x})\}\x$
is also a lattice in $D_{K, j}$.
Now, by \cite[Lem. 4.4]{GLS14}, we can find some $B_j\in \GL_d(\O_E)$ such that if we let $ (\f_{j-1, x})\x  := (\alpha_{j-1, x}')\x B_j  $, then we have that
\numequation \label{fullgen}
 \{ f_j( \f_{j-1, x} ) \}\x \textnormal{ fully generates } \{  \Fil^{r_{j, x}} D_{K, j}  \}\x \textnormal{over } E.
\end{equation}

\textbf{Step 1}. For each $x$, if $r_{j, x}=0$, then we let $\f_{j, x}^{(0)}:=\f_{j-1, x}$. If $r_{j, x}>0$, then for every $1 \leq n \leq r_{j, x}$, we want to construct
$$\f_{j, x}^{(n)} = \f_{j-1, x} +\left( \pi_j \prod_{q=0}^{j-1}(u-\pi_q)^{r_{q, d}} \right) \sum_{s=1}^{n-1} (u-\pi_j)^s \left(\sum_{y=1}^d a_{s, x, y}^{(n)}\e'_{i, 0, y} \right) \in
\Fil^{\{\rr, n,\0\}}\M^*_i,$$
with some $a_{s, x, y}^{(n)} \in \O_E$.
Note that we have kept $i$ in the subscript of $\e'_{i, 0, y}$ to emphasize it.
Let us now fix $x$, and drop $x$ from the subscripts. We denote $\f^{(n)}:=\f_{j, x}^{(n)}$ and $\f:=\f_{j-1, x}$ in order for easier comparison with the proof of \cite[Prop. 2.3.5]{GLS15}. We also denote $\f_y:=\e'_{i, 0, y}$, $Q:=Q_{j-1}= \prod_{q=0}^{j-1}(u-\pi_q)^{r_{q, d}}$ for brevity. That is, we want to construct
\numequation
\label{construct}
\f^{(n)} = \f +  \pi_j Q  \sum_{s=1}^{n-1}  (u-\pi_j)^s \left(\sum_{y=1}^d a_{s, y}^{(n)}\f_{y}\right) \in \Fil^{\{\rr, n,\0\}}\M^*_i.
\end{equation}

We prove \eqref{construct} by induction on $n$. When $n=1$, it suffices to show that $\f=\f_{j-1, x}  \in \Fil^{\{\rr, 1,\0\}}\M^*_i$, which can be concluded from the following:
\begin{itemize}
\item  $N(\f) \in \Fil^{\{\rr-1, 0, \0\}}\M^*_i$, because $\f \in \Fil^{\{\rr, 0, \0\}}\M^*_i$.
\item  $ f_q (\f)  \in \Fil^{r_{q, d}} D_{K, q}, \forall 0 \leq q \leq j-1  $ because $\f \in \Fil^{\{\rr, 0, \0\}}\M^*_i$; and $f_j (\f)  \in \Fil^{r_{j, x}} D_{K, j} \subset \Fil^1 D_{K, j}$ because of \eqref{fullgen}.
\end{itemize}
Suppose we have constructed \eqref{construct} for $n$, and now consider $n+1$ (which is $\leq r_{j, x}$).

Let
\numequation
\label{Hpoly}
H:=H_j(u)=  \frac{(u-\pi_j)}{\pi_j} \frac{\prod_{q=0}^{j-1} (u-\pi_q)^{ \max\{r_{q, d}, 1\} }  } { \prod_{q=0}^{j-1}(\pi_j-\pi_q)^{\max\{r_{q, d}, 1\}} }.
\end{equation}
The polynomial $H$ satisfies the following properties:
\begin{itemize}
  \item (Property A): $(u-\pi_j) \mid  N(H(u))+1$, where $N: S[1/p] \to S[1/p]$ the $K_0$-linear differential operator such that $N(u)=-u$.
  \item (Property B) with respect to $v_{\pi_j}$ as in Definition \ref{defn: property B}, by Lemma \ref{lemma: property B}.
 \end{itemize}
Now, we consider
\numequation
\label{ftilde}
\widetilde\f^{(n+1)}: = \sum _{\ell  =0} ^ n \frac{H(u)^\ell N^\ell (\f^{(n)})}{\ell !}.
\end{equation}
Then we have
$$ N(\tilde\f^{(n+1)}) = \frac{H(u)^n}{n!} N^{n+1}(\f^{(n)}) + \sum_{\ell=1}^{n}
\frac{(1+N(H(u)))H(u)^{\ell-1} N^{\ell}(\f^{(n)})}{(\ell-1)!}.$$
One can easily see that
\begin{itemize}[leftmargin=*]
  \item $N(\tilde\f^{(n+1)}) \in \Fil^{\{\rr-1, n, \0\}} \D_i$, using (Property A) of $H(u)$ above.

  \item When $0 \leq q \leq j-1$, $f_{q}(\tilde\f^{(n+1)})=f_{q}(\f^{(n)})$ (since $H(\pi_q)=0$), which is in $\Fil^{r_{q, d}}D_{K, q}$, because $\f^{(n)} \in \Fil^{\{\rr, n, \0\}}\M^*_i$ by induction hypothesis.

  \item $f_{j}(\tilde\f^{(n+1)})=f_{j}(\f^{(n)})=f_{j}(\f) \in \Fil^{r_{j, x}}D_{K, j} \subseteq \Fil^{n+1}D_{K, i1}$ because $n+1\leq r_{j, x}$.

\end{itemize}
So we can conclude that $\tilde\f^{(n+1)} \in \Fil^{\{\rr, n+1, \0\}} \D_i$.

We have
\nummultline
\label{bigeq}
\widetilde \f^{(n+1)} -  \f^{(n)} =   \sum _{\ell  =1} ^ n \frac{ (H)^\ell} { \ell!} N^\ell
\left(\f +  \pi_j Q \sum_{s=1}^{n-1} (u-\pi_j)^s\left  (\sum_{y=1}^d a_{s, y}^{(n)}\f_{y}\right )\right )\\
= \sum _{\ell  =1} ^ n \frac{ (H)^\ell} { \ell!}
N^\ell (\f) +
  \sum_{\ell  =1}^n \sum_{s=1}^{n-1} \sum_{t=0}^{\ell} \frac{(H)^\ell}{\ell!} \binom{\ell}{t}
  N^{\ell -t} \Bigg( \pi_j Q(u -\pi_j)^s \Bigg)
  \left  (\sum_{y=1}^d a_{s, y}^{(n)}N^t(\f_{y}) \right ).
\end{multline}
Denote the right hand side of \eqref{bigeq} as $F$. Let $G=: \frac{Q}{ \prod_{q=0}^{j-1}(\pi_j-\pi_q)^{r_{q, d}}}$, then $\frac{H}{G}$ is still in $E[u]$.
Factor $G$ from $F$, and write $F=G\frac{F}{G}$.
For $\beta=\f$ or one of $\f_y, 1\leq y \leq d$, let us write
\numequation
\label{subst}
\D_i \ni N^{t}(\beta) = \sum_{\tilde{m}=0}^{\infty}  (u-\pi_j)^{\tilde m} (\sum_{z=1}^d  c_{\tilde{m}, z}^t(\beta) \f_{z}), \textnormal{ where }c_{\tilde{m}, z}^t(\beta) \in E.
\end{equation}
Substituting \eqref{subst} into $\frac{F}{G}$, and rewrite all element (e.g., $\frac{H^{\ell}}{G\ell!}$, $N^{\ell -t}( \pi_j Q(u -\pi_j)^s )$) in $E[u]$ to be in the ring $E[u-\pi_j]$.
Then we can collect terms and write
$$\widetilde \f^{(n+1)} = \f^{(n)} + G \left(\sum_{m=1}^{\infty} (u-\pi_j)^m (\sum_{y=1}^d b_{m,y}\f_y)\right), \textnormal{ where } b_{m,y}\in E.$$
And finally, we can define
$$\f^{(n+1)}:= \f^{(n)}+ G\left( \sum_{m=1}^{n} (u-\pi_j)^m (\sum_{y=1}^d b_{m,y}\f_y)\right).$$
Since the terms that we throw away is $G \left(\sum_{m=n+1}^{\infty} (u-\pi_j)^m (\sum_{y=1}^d b_{m,y}\f_y)\right)$, which belongs in $\Fil^{\{\rr, n+1, \0\}}\D_i$, so we have $\f^{(n+1)} \in \Fil^{\{\rr, n+1, \0\}}\D_i$.

\textbf{Step 2}.
In order to finish our construction of \eqref{construct} for $n+1$, it suffices to show that for each $1\leq m \leq n$,
$v_{\pi_j}(b_{m,y}) \geq 1+ |\rr|$ where $|\rr|:=\sum_{q=0}^{j-1} r_{q, d}$ (Note that the denominator of $G$ has $v_{\pi_j}$ equal to $|\rr|$).
First, we list the following facts in controlling the coefficients.
\begin{itemize}[leftmargin=*]
  \item (Coe-1): Let $S'=W(k)\llb u^p, \frac{u^{ep}}{p}\rrb[\frac{1}{p}] \cap S$ and set $\mathcal I_{\ell} = \sum_{m=1}^{\ell} p^{\ell-m} u^{pm} S'$.
      Fix one $1\leq \ell \leq p$,
      then for any $\beta \in \huaM_i^*$, $N^{\ell}(\beta) = \sum_y w_y \f_y $ with $w_y \in \mathcal I_{\ell}$ by \cite[Cor. 4.11]{GLS14}.
      For each $w$ of $w_y$, expand it as $w=\sum_{i=0}^{\infty} a_i(u-\pi_j)^i$, then by \cite[Lem. 2.3.9]{GLS15}, we have
      $v_{\pi_j} (a_0) \geq p+(\ell-1)\min\{p, e\}$ and $v_{\pi_j} (a_i) \geq p+e-i+(\ell -1)\min\{p, e\}, \forall 1 \leq i \leq p-1$. But we only need a weak form of these results, namely,
    \begin{equation*}
    v_{\pi_j} (a_i) \geq p-i, \forall 0 \leq i \leq p-1.
\end{equation*}

\item (Coe-2): When $1\leq \ell <p$, the polynomial $\frac{(H)^\ell} {G\ell!}$ satisfies (Property B) of \ref{defn: property B} with respect to $v_{\pi_j}$.

\end{itemize}

Now we analyze the terms in $\frac{F}{G}$, and prove that for any $m\leq n$, each $(u-\pi_j)^m$ appearing in it has coefficient divisible by $\pi_j^{1+ |\rr|}$.

\textbf{\underline{\emph{(Part 1)}}}. For the part $\sum _{\ell  =1} ^ n \frac{(H)^\ell} {G\ell!} N^\ell (\f)$. To build a term with $(u-\pi_j)^m$ where $1\leq m \leq n$, suppose that $H^{\ell}/G$ contributes $(u-\pi_j)^{m_1}$, and in $N^\ell (\f)$, some $w$ (as in (Coe-1) above) contributes $(u-\pi_j)^{m_2}$. Then using (Coe-2) and (Coe-1), the coefficient of this $(u-\pi_j)^{m_1+m_2}$ has $v_{\pi_j}$ at least $-m_1+p-m_2 \geq p-n \geq p-(r_{j, d}-1)$, which is $\geq |\rr|+1$ by Assumption (A0).

\textbf{\underline{\emph{(Part 2)}}}. For the part of $$\frac{  (H)^\ell}{G\ell!} \binom{\ell}{t}  N^{\ell -t} \Bigg(\pi_j Q(u -\pi_j)^s\Bigg) \left  (\sum_{y=1}^d a_{s, y}^{(n)} N^{t}(\f_{y}) \right ),$$
it suffices to consider the $\pi_j$-powers in the coefficients of
$$    \frac{(H)^\ell}{G}   N^{\ell -t} \left(\pi_j Q(u -\pi_j)^s\right)   N^{t}(\f_{y}), $$
because $a_{s, y}^{(n)} \in \O_E$ by induction hypothesis, and $\ell<p$.
To build a term with $(u-\pi_j)^m$ where $1\leq m \leq n$, suppose that $H^{\ell}/G$ contributes $(u-\pi_j)^{m_1}$, $N^{\ell -t} \left (\pi_j Q(u -\pi_j)^s\right )$ contributes $(u-\pi_j)^{m_2}$, and $N^{t}(\f_{y})$ contributes $(u-\pi_j)^{m_3}$.
Using (Coe-2) and (Coe-1), and note that $N^{\ell -t} \left (\pi_j Q(u -\pi_j)^s\right )$ can be written as a polynomial in $\O_E[u-\pi_j]$, we can see that the coefficient of this $(u-\pi_j)^{m_1+m_2+m_3}$ has $v_{\pi_j}$ at least $-m_1+p-m_3 \geq p-n \geq p-(r_{j, d}-1)$, which is $\geq |\rr|+1$ by Assumption (A0).

Now we can conclude that \eqref{construct} is valid for all $1 \leq n \leq r_{j, x}$.

\textbf{Step 3.} Finally, we can define
$$(\e'_{j, x})\x : = (\f_{j, x}^{(r_{j, x})})\x,$$
and we can conclude the proof of our proposition, by verifying that $ (\e'_{j, x})\x$ satisfies all the four items in our proposition:
\begin{itemize}[leftmargin=*]
 \item For Item (4). Let us write \eqref{construct} as
 $$\f^{(n)} =\f+ \pi_j P.$$
 Since both $\f^{(n)}$ and $\f$ are in $\Fil^{\{\rr, 0,\0\}}\M^*_i$, so $\pi_j P \in \Fil^{\{\rr, 0,\0\}}\M^*_i$, and so $P \in \Fil^{\{\rr, 0,\0\}}\M^*_i$ too.
     This means that $P$ can be written as a $\huaS_{\O_E, i}$-linear combination of the basis $(\f_{j-1, x})\x$ of $\Fil^{\{\rr, 0,\0\}}\M^*_i$.
 Then it is clear from \eqref{construct} that $(\e'_{j, x})\x = (\f_{j-1, x})\x A_{j}$ for some matrix $A_{j} \in \GL_d(\O_E\llb u \rrb)$ such that $A_{j} \equiv Id \pmod{\omega_E}$.
 \item For Item (1). Apply induction hypothesis to Item (3), $(\alpha'_{j-1, x})\x$ is a basis for $\Fil^{\{\rr, 0, \0\}}$, so $(\e'_{j, x})\x = (\f_{j-1, x})\x A_j = (\alpha'_{j-1, x})\x B_jA_j$ is also a basis.
 \item For Item (2). By \eqref{construct}, we have $f_j(\e'_{j, x}) = f_j (\f_{j-1, x})$, and then apply \eqref{fullgen}.
 \item Item (3) is a consequence of Item (1) and (2), by Proposition \ref{fully_generates}.
\end{itemize}
\end{proof}

\begin{corollary}\label{above}
With notations in (CRYS), suppose $p \nmid e$, and assume that $\sum_{j=0}^{e-1} r_{i,j,d} \leq p, \forall i$.
Then we have $\Fil ^{\{p, p, \dots, p\}}\M^*_i =\bigoplus_{x=1}^d \huaS_{\O_E,i} \alpha_{i, e-1, x},$
where
$$(\alpha_{i, e-1, 1}, \ldots, \alpha_{i, e-1, d})= (\e'_{i,0, 1}, \ldots, \e'_{i,0, d}) \Lambda'_{i,0} \left(\prod_{j =1}^{e-1} Z'_{ij} \Lambda'_{ij} \right)  $$
such that
\begin{itemize}
  \item $\e'_{i,0, x}$ as in Proposition \ref{level0base}(2).
  \item $\Lambda'_ {ij}= [(u - \pi _{i,j})^{p-r_{ij, 1}}, \ldots, (u - \pi _{i,j})^{p-r_{ij, d}}], \forall 0 \leq j \leq e-1$.
  \item $Z'_{ij} =B_{i,j}A_{i,j} \in \GL_d (\O_E\llb u\rrb)$ as in Proposition \ref{existbase}(4).
\end{itemize}
\end{corollary}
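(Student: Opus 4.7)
A direct matrix computation reduces the corollary to a clean scalar-multiplication identity. From Proposition \ref{existbase}(4) iterated, we have
\[ (\alpha'_{i,e-1,x})_{x=1}^d = (\e'_{i,0,x})_{x=1}^d \, L_{i,0} \prod_{k=1}^{e-1}(Z'_{ik} L_{i,k}), \qquad L_{i,k} := [(u-\pi_{i,k})^{r_{i,k,d}-r_{i,k,x}}]_{x=1}^d. \]
Observing that $\Lambda'_{i,k} = L_{i,k} \cdot (u-\pi_{i,k})^{p-r_{i,k,d}} \cdot Id$ and that these scalar factors commute past each $Z'_{ik}$, we may collect them together and obtain $(\alpha_{i,e-1,x})_{x=1}^d = C \cdot (\alpha'_{i,e-1,x})_{x=1}^d$, with $C := \prod_{k=0}^{e-1}(u-\pi_{i,k})^{p-r_{i,k,d}} \in \huaS_{\O_E,i}$. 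In view of Proposition \ref{existbase}(3), the corollary is thus equivalent to the identity
\[
\Fil^{\{p,\ldots,p\}}\M^*_i \;=\; C \cdot \Fil^{\{r_{i,0,d},\ldots,r_{i,e-1,d}\}}\M^*_i.
\]

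The inclusion $\supseteq$ is a direct verification against the recursive definition of the filtration. For $y$ in the right-hand filtration and any $j$, $f_{i,j}(Cy) = 0 \in \Fil^p D_{K,ij}$ whenever $r_{i,j,d} < p$ (because the factor $(u-\pi_{i,j})^{p-r_{i,j,d}}$ in $C$ kills $f_{i,j}$); the edge case $r_{i,j,d} = p$ is handled directly using Assumption (A0), which forces $r_{i,k,d} = 0$ for $k \neq j$. The descending condition $N(Cy) \in \Fil^{\{p-1,\ldots,p-1\}}\M^*_i$ then follows by the Leibniz rule together with a descending induction on the filtration level.

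The converse $\subseteq$ is the main obstacle, and for this I prove by induction on $j \in \{0,1,\ldots,e-1\}$ that $\Fil^{\{p,\ldots,p,0,\ldots,0\}}\M^*_i$ (with $j+1$ copies of $p$) is freely generated by the columns of $(\e'_{i,0,x})_{x=1}^d \Lambda'_{i,0} \prod_{k=1}^j(Z'_{ik}\Lambda'_{ik})$; the $j = e-1$ case recovers the desired $\subseteq$. The base case $j = 0$ is a direct application of Proposition \ref{fully_generates} with $n = p$ (permitted since $p \geq r_{i,0,d}$ by (A0)). The induction step requires an analogue of Proposition \ref{fully_generates} valid when the earlier filtration indices are already at $p$ rather than at $r_{i,k,d}$. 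The hard part is to rerun the inductive construction of Proposition \ref{existbase} (Steps 0--3) in this generalized setting; the crucial valuation estimates of Step 2 continue to hold because the scalar factor $(u-\pi_{i,k})^{p-r_{i,k,d}}$ already accounts for the excess between $p$ and $r_{i,k,d}$ at each previously treated position, effectively returning the analysis to the range where Assumption (A0) $\sum_j r_{i,j,d} \leq p$ suffices.
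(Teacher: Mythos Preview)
Your reduction to the scalar identity
\[
\Fil^{\{p,\ldots,p\}}\M^*_i \;=\; C \cdot \Fil^{\{r_{i,0,d},\ldots,r_{i,e-1,d}\}}\M^*_i, \qquad C=\prod_{k=0}^{e-1}(u-\pi_{ik})^{\,p-r_{i,k,d}},
\]
is exactly right, and combined with Proposition~\ref{existbase}(3) at $j=e-1$ plus the transition relation (4), this is also how the paper proceeds.

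Where you diverge is in proving the scalar identity. The paper regards it as immediate from \cite[Lem.~2.2.1(3)]{GLS15}, the very lemma already invoked in the last paragraph of the proof of Proposition~\ref{fully_generates}: once an index $m_j$ is at least the top Hodge--Tate weight $r_{i,j,d}$, multiplication by $(u-\pi_{ij})$ is an \emph{isomorphism} $\Fil^{\{\ldots,m_j,\ldots\}}\M^*_i \xrightarrow{\sim} \Fil^{\{\ldots,m_j+1,\ldots\}}\M^*_i$, regardless of the other indices. Iterating over each coordinate from $r_{i,j,d}$ up to $p$ yields both inclusions at once. That is why the paper calls the corollary ``easy''.

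Your $\supseteq$ sketch essentially reproves the easy half of that lemma, which is fine. Your $\subseteq$ plan, however, is mis-described. You propose to ``rerun the inductive construction of Proposition~\ref{existbase} (Steps 0--3)'' with the earlier indices at $p$ and assert that the Step~2 valuation estimates still hold. Taken literally they do not: Step~2 requires $v_{\pi_j}(b_{m,y})\ge 1+|\rr|$, and replacing $|\rr|=\sum_{q<j}r_{q,d}$ by $jp$ destroys the inequality $p-n\ge 1+|\rr|$. What your parenthetical ``the scalar factor accounts for the excess'' really points to is a different and valid argument: do \emph{not} reconstruct the $\e'_{i,j,x}$, but take the already-built ones, scale by $C_{j-1}$, and feed them as input into a version of Proposition~\ref{fully_generates} with $\rr$ replaced by $(p,\ldots,p)$. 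That works --- but checking hypothesis~(1), namely $C_{j-1}\e'_{i,j,x}\in\Fil^{\{p,\ldots,p,r_{i,j,x},\0\}}\M^*_i$, is again the $\supseteq$-half of \cite[Lem.~2.2.1(3)]{GLS15}. So your inductive route can be salvaged, but it amounts to re-deriving (a piece of) the lemma you could simply cite, and the step you singled out as ``the hard part'' is not the one that actually needs justification.
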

\begin{proof}
This is easy corollary of Proposition \ref{existbase}. Note that \cite[Cor. 2.3.10]{GLS15} proved the case for $d=2$ where all the auxiliary labelled Hodge-Tate weights are $\{0, 1\}$. But the argument of \textit{loc. cit.} relies on this special shape of auxiliary labelled Hodge-Tate weights, and cannot be generalized.
\end{proof}

\begin{thm}\label{shapecorollary}
With notations in (CRYS), suppose $p \nmid e$, and assume that $\sum_{j=0}^{e-1} r_{i,j,d} \leq p, \forall i$.
Then there exists a basis $\bolde_{i}$ of $\huaM_i$ such that,
$$\varphi(\bolde_{i-1}) = \bolde_{i} X_i \left(\prod_{j=1}^{e-1} \Lambda_{i,e-j} Z_{i,e-j} \right)
  \Lambda_{i,0} Y_i ,  $$
where
\begin{itemize}
  \item $X_i, Y_i, Z_{i, e-j} \in \GL_d(\O_E\llb u\rrb), \forall i, j$.
  \item $\overline Y_i:=Y_i  \pmod{\omega_E}  =Id$, and $\overline Z_{i, e-j}:= Z_{i, e-j}\pmod{\omega_E}  \in \GL_d(k_E)$.
  \item $\Lambda_{i, j}$ is the diagonal matrix $[(u-\pi_{ij})^{r_{i,j,x}}]_{x=1}^d$.
\end{itemize}
\end{thm}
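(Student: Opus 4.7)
The plan is to combine the adapted basis of $\Fil^{\{p,\ldots,p\}}\M^*_i=\Fil^p\M^*_i$ produced by Corollary \ref{above} with the standard Kisin-module compatibility $(1\otimes\varphi)(\Fil^p\M^*_i)\subseteq E(u)^p\M_i$, available because the hypothesis $\sum_j r_{i,j,d}\leq p$ bounds all Hodge-Tate weights by $p$. One then rearranges the resulting matrix equation so that the diagonal blocks $\Lambda_{i,j}=[(u-\pi_{ij})^{r_{i,j,x}}]_x$ appear in isolation.

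First choose $\bolde_{i-1}=(e_{i-1,0,x})_x$ and the lifts $(\e'_{i,0,x})_x\in\M^*_i$ from Proposition \ref{level0base}. Part (2)(a) of that proposition produces $Y_i\in\GL_d(\O_E\llb u\rrb)$ with $\overline{Y_i}=Id$ satisfying $1\otimes\bolde_{i-1}=(\e'_{i,0,x})_x\cdot Y_i$. Corollary \ref{above} then supplies
\[
(\alpha_{i,e-1,x})_x=(\e'_{i,0,x})_x\cdot M,\qquad M:=\Lambda'_{i,0}\prod_{j=1}^{e-1}Z'_{i,j}\Lambda'_{i,j},
\]
with $(\alpha_{i,e-1,x})_x$ a basis of $\Fil^p\M^*_i$. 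Chaining the two identities yields $(\alpha_{i,e-1,x})_x=(1\otimes\bolde_{i-1})\cdot Y_i^{-1}M$. Define $\bolde_i=(v_{i,x})_x$ in $\M_i$ by $E(u)^p v_{i,x}:=(1\otimes\varphi)(\alpha_{i,e-1,x})$; granting that $\bolde_i$ is indeed a basis of $\M_i$ (see the last paragraph), applying $1\otimes\varphi$ to the previous identity gives
\[
E(u)^p\bolde_i=\varphi(\bolde_{i-1})\cdot Y_i^{-1}M,\qquad\text{equivalently}\qquad\varphi(\bolde_{i-1})=\bolde_i\cdot\bigl(E(u)^p M^{-1}\bigr)\cdot Y_i.
\]

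It remains to rewrite $E(u)^p M^{-1}$ in isolated form. The key diagonal identity is
\[
E(u)^p\cdot(\Lambda'_{i,j})^{-1}=\Lambda_{i,j}\cdot\prod_{q\neq j}(u-\pi_{iq})^p,
\]
which follows from $E(u)^p=\prod_q(u-\pi_{iq})^p$ together with the defining formulas for $\Lambda_{i,j}$ and $\Lambda'_{i,j}$. Expanding
\[
M^{-1}=(\Lambda'_{i,e-1})^{-1}(Z'_{i,e-1})^{-1}(\Lambda'_{i,e-2})^{-1}\cdots(Z'_{i,1})^{-1}(\Lambda'_{i,0})^{-1}
\]
and absorbing $E(u)^p$ into the diagonal factors one at a time from left to right (the accumulating central scalar $\prod_{q\notin\{e-1,\ldots,j+1\}}(u-\pi_{iq})^p$ commutes freely past each $(Z'_{i,?})^{-1}$), one arrives at
\[
E(u)^p M^{-1}=\prod_{j=1}^{e-1}\bigl(\Lambda_{i,e-j}(Z'_{i,e-j})^{-1}\bigr)\cdot\Lambda_{i,0}.
\]
Setting $Z_{i,e-j}:=(Z'_{i,e-j})^{-1}$ and $X_i:=Id$ delivers the stated formula. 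The reductions $\overline{Y_i}=Id$ and $\overline{Z_{i,e-j}}\in\GL_d(k_E)$ are immediate from Proposition \ref{level0base}(2) and Proposition \ref{existbase}(4) respectively, using the decomposition $Z'_{i,e-j}=B_{i,e-j}A_{i,e-j}$ with $B_{i,e-j}\in\GL_d(\O_E)$ and $A_{i,e-j}\equiv Id\pmod{\omega_E}$.

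The main point requiring attention is that $\bolde_i$ really is a basis of $\M_i$, equivalently that $(1\otimes\varphi):\Fil^p\M^*_i\to E(u)^p\M_i$ is an isomorphism. Injectivity is clear because $1\otimes\varphi$ is injective. For surjectivity, let $\bolde_i^{(0)}$ be any basis of $\M_i$ and $C$ the matrix of $\varphi$ relative to $(\bolde_{i-1},\bolde_i^{(0)})$; the displayed identity gives $\bolde_i=\bolde_i^{(0)}\cdot\widetilde A$ with $\widetilde A:=E(u)^{-p}C\,Y_i^{-1}M$. The containment $(1\otimes\varphi)(\Fil^p\M^*_i)\subseteq E(u)^p\M_i$ forces $\widetilde A\in M_d(\huaS_{\O_E,i})$, and reducing modulo $\omega_E$ one computes $E(u)\equiv u^e$, $\det M\equiv(\text{unit})\cdot u^{\sum_j(dp-\sum_x r_{i,j,x})}$ (from $\det\Lambda'_{i,j}=(u-\pi_{ij})^{dp-\sum_x r_{i,j,x}}$ and $\det Z'_{i,j}\in\huaS_{\O_E,i}^{\times}$), and $\det C\equiv(\text{unit})\cdot u^{\sum_{j,x}r_{i,j,x}}$ (the total Hodge-Tate weight). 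Multiplying, the $u$-exponent collapses: $u^{pde}\det\widetilde A\equiv(\text{unit})\cdot u^{pde}\pmod{\omega_E}$, so $\det\widetilde A\equiv(\text{unit})\pmod{\omega_E}$ and $\widetilde A\in\GL_d(\huaS_{\O_E,i})$ as needed. The hardest conceptual step in the entire argument is the algebraic juggling to get from the $\Lambda'_{i,j}$-shape of Corollary \ref{above} to the $\Lambda_{i,j}$-shape of the theorem, which crucially exploits the identity $\Lambda_{i,j}\Lambda'_{i,j}=(u-\pi_{ij})^p\cdot\Id$ and the centrality of the scalar $E(u)^p$.
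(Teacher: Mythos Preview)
Your overall strategy is exactly the paper's (which simply cites \cite[Thm.~2.4.1]{GLS15}): feed the adapted basis of $\Fil^p\M^*_i$ from Corollary~\ref{above} through the isomorphism $(1\otimes\varphi):\Fil^p\M^*_i\to E(u)^p\M_i$, and then unwind the diagonal identity $\Lambda_{i,j}\Lambda'_{i,j}=(u-\pi_{ij})^p\cdot Id$. Your matrix rearrangement of $E(u)^pM^{-1}$ is correct, and the reductions $\overline{Y_i}=Id$, $\overline{Z_{i,e-j}}\in\GL_d(k_E)$ follow just as you say.

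There is, however, a genuine gap in the claim $X_i=Id$. You define $\bolde_{i-1}:=(e_{i-1,0,x})_x$ from Proposition~\ref{level0base} and then \emph{separately} define $\bolde_i:=(v_{i,x})_x$ via $(1\otimes\varphi)(\alpha_{i,e-1,x})=E(u)^p v_{i,x}$. But the theorem requires a single family $(\bolde_i)_i$ satisfying the displayed relation for \emph{every} $i$; your $\bolde_i$ from step $i$ has no reason to coincide with the $(e_{i,0,x})_x$ that you would feed in as ``$\bolde_{(i+1)-1}$'' at the next step. The correct fix is to take $\bolde_i:=(e_{i,0,x})_x$ throughout and set $X_i$ to be the change-of-basis matrix between $(v_{i,x})_x$ and $(e_{i,0,x})_x$; this is precisely why the statement carries an unconstrained $X_i\in\GL_d(\O_E\llb u\rrb)$.

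A second, smaller point: your determinant argument that $(v_{i,x})_x$ is a basis invokes $\det C\equiv(\text{unit})\cdot u^{\sum_{j,x}r_{i,j,x}}\pmod{\omega_E}$, justified only by the phrase ``the total Hodge-Tate weight''. In this paper the rank-$1$ description you are implicitly using (Lemma~\ref{rank1}) is proved \emph{after} and \emph{using} Theorem~\ref{shapecorollary}, so the argument as written risks circularity. It is cleaner to bypass the determinant entirely: since all labelled weights are $\le p$ the height is $\le p$, whence $E(u)^p\M_i\subseteq(1\otimes\varphi)(\M^*_i)$, and together with the identification $\Fil^p\M^*_i=(1\otimes\varphi)^{-1}(E(u)^p\M_i)$ (\cite[Lem.~2.2.1]{GLS15}) and injectivity of $1\otimes\varphi$ this gives the desired isomorphism $\Fil^p\M^*_i\xrightarrow{\sim}E(u)^p\M_i$ directly.
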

\begin{proof}
Similar as in \cite[Thm. 2.4.1]{GLS15}. Note that in \loccit, $Z_{i, e-j} \in \GL_d(\O_E)$. But here in our situation, we will only have $Z_{i, e-j} \in \GL_d(\O_E\llb u\rrb)$. Fortunately, we still have the reduction $\overline Z_{i, e-j} \in \GL_d(k_E)$, because $Z_{i, e-j} =(Z_{i, e-j}')^{-1}$ and $\overline Z_{i, e-j}' =\overline B_{i, e-j} \overline A_{i, e-j} \in \GL_d(k_E)$ by Proposition \ref{existbase}(4).
\end{proof}

\begin{remark}\label{rem: adapted basis}
\begin{enumerate}
  \item We call the basis $\bolde_{i}$ ($0 \le i \le f-1$) the ``adapted basis" for $\varphi$. In fact, we usually call those elements $\alpha_{i, e-1, x}$ in Corollary \ref{above} the ``base adapt\'ee" for the filtration $\Fil ^{\{p, p, \dots, p\}}\M^*_i$, see \cite[Def. 2.2.1.4]{Bre99a}. So we are slightly abusing the terminology here, although it is easy to see that they determine each other.
  \item (The following remark is inspired from discussions with Tong Liu.)
   As we already mentioned after Theorem \ref{adpt basis} in the Introduction, the adapted basis allows us to isolate the diagonal matrices $\Lambda_{i, j}$. The significance of the ``isolation of $\Lambda_{i, j}$" is that when we consider crystalline liftings (of some residual representation) with \emph{fixed} labeled Hodge-Tate weights, then the corresponding Kisin modules will always admit adapted bases, and the only part (of the matrix for $\varphi$) that is changing (i.e., deforming) are the matrices $X_i, Y_i, Z_{i, e-j}$.

      The adapted bases help to study reductions of crystalline representations (see Proposition \ref{tameinertia}), which in turn are needed when considering crystalline lifting problems. Indeed for example, the key fact that the argument of \cite[Lem. 1.4.2]{BLGGT14} can work is the existence of adapted basis in the Fontaine-Laffaille case (which is much easier than our situation). We expect our adapted bases of Kisin modules can have some similar applications in the future.
\end{enumerate}
\end{remark}

\subsection{Rank-1 $(\varphi, \hat G)$-modules in the tamely ramified case}

\begin{defn}
Let $\boldt = (t_0, \ldots, t_{f-1})$ be a sequence of non-negative integers, $a \in k_{E}^{\times}$. Let $\barm(\boldt; a):=\barm(t_0, \ldots, t_{f-1}; a)= \prod_{i=0}^{f-1}\barm(\boldt; a)_i$ be the rank-$1$ module in $\Mod_{\huaS_{k_E}}^{\varphi}$ such that
\begin{itemize}
\item $\barm(\boldt; a)_i$ is generated by $e_i$, and
\item $\varphi(e_{i-1})=(a)_i u^{t_i}e_i$, where $(a)_i=a$ if $i=0$ and $(a)_i=1$ otherwise.
\end{itemize}
\end{defn}

\begin{defn} \label{huaMrank1}
Let $(r_{i, j})$ be an $f\times e$-matrix where $r_{i, j} \geq 0$ are integers.
Let $\hat a \in \O_E^{\times}$.
Let $\huaM((r_{i, j}); \hat a):= \prod_{i=0}^{f-1}\huaM((r_{i, j});  \hat a)_i$ be the rank-$1$ module in $\Mod_{\huaS_{\mathcal O_E}}^{\varphi}$ such that
\begin{itemize}
\item $\huaM((r_{i, j});  \hat a)_i$ is generated by $\hat e_i$, and
\item $\varphi(\hat e_{i-1})=(\hat a)_i \prod_{j=0}^{e-1}(u-\pi_{ij})^{r_{i, j}}\hat e_i$, where $(\hat a)_0=\hat a$ and $(\hat a)_i=1, \forall i\neq 0$.
\end{itemize}
\end{defn}

\begin{lemma}\label{rank1} \hfill
 \begin{enumerate}
   \item If $a \in k_E^{\times}$ is the reduction of $\hat a$, then $\huaM((r_{i, j}); \hat a)/\omega_E\huaM((r_{i, j}); \hat a) \simeq \barm(\boldt; a)$, where $\boldt = (t_0, \ldots, t_{f-1})$ with $t_i=\sum_{j=0}^{e-1}r_{i, j}$.

   \item When $p \nmid e$, there is a unique $\hat \huaM((r_{i, j}); \hat a) \in \Mod_{\huaS_{\mathcal O_E}}^{\varphi, \Ghat}$ such that
    \begin{itemize}
      \item The ambient Kisin module of $\hat \huaM((r_{i, j}); \hat a)$ is $\huaM((r_{i, j}); \hat a)$, and
      \item $\That(\hat \huaM((r_{i, j}); \hat a))$ is a crystalline character.
    \end{itemize}
   In fact,
    $\That(\hat \huaM((r_{i, j}); \hat a))= \lambda_{\hat a}\prod_{0\leq i\leq f-1, 0 \leq j\leq e-1}\psi_{i, j}^{r_{i, j}},$
    where $\psi_{i, j}$ is a certain crystalline character such that $\HT_{i', j'}(\psi_{i, j})=\{0\}$ for $(i',j')\neq (i, j)$ and $\HT_{i, j}(\psi_{i, j})=\{1\}$, and $\lambda_{\hat a}$ is the unramified character of $G_K$ which sends the arithmetic Frobenius to $\hat a$.
 \end{enumerate}
 \end{lemma}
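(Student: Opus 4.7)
For part (1), the proof is just a direct reduction modulo $\omega_E$. Each $\pi_{ij} = \kappa_{ij}(\pi)$ lies in the maximal ideal of $\O_E$, since $\pi$ is a uniformizer of $K$ and hence $v_p(\pi_{ij}) = 1/e > 0$. Therefore each factor $(u-\pi_{ij})$ reduces to $u$ modulo $\omega_E$, and the defining relation $\varphi(\hat e_{i-1}) = (\hat a)_i \prod_j (u-\pi_{ij})^{r_{i,j}} \hat e_i$ becomes $\varphi(\bar e_{i-1}) = (a)_i u^{t_i} \bar e_i$ with $t_i = \sum_j r_{i,j}$, which is precisely $\barm(\boldt;a)_i$.

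For part (2), my plan is to argue existence, identification, and uniqueness separately. For existence, crystalline characters of $G_K$ with prescribed labelled Hodge–Tate weights form a torsor under unramified characters; one may construct $\psi_{i,j}$ explicitly (say, via its weakly admissible filtered $\varphi$-module: take $D = K_0 \otimes_{\Q_p} E$ with some specific $\varphi$-action having unit determinant in each component, and place the step of the filtration on $D_K$ in the $(i,j)$-component). Then $\chi := \lambda_{\hat a}\prod_{i,j}\psi_{i,j}^{r_{i,j}}$ is a crystalline character of $G_K$ with the correct labelled Hodge–Tate weights and unramified part. To identify its Kisin module with $\huaM((r_{i,j});\hat a)$, I would compute the Kisin module of each $\psi_{i,j}$ directly from its weakly admissible filtered $\varphi$-module via the standard recipe; the point is that over each factor $\huaS_{\O_E,i}$ one may choose a generator so that the $\varphi$-matrix becomes $(u-\pi_{ij})$ in the $(i,j)$-slot and $1$ otherwise. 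Tensoring over all $(i,j)$ and twisting by $\lambda_{\hat a}$ produces $\huaM((r_{i,j});\hat a)$; this computation uses $p \nmid e$, which ensures the factors $(u-\pi_{ij})$ are distinct and the tame characters $\psi_{i,j}$ are well-defined in this simple form. Existence of a compatible $\hat G$-action making the associated $\That$ crystalline then follows from Liu's theory (since $\chi$ is crystalline and $\huaM((r_{i,j});\hat a)$ is its associated Kisin module).

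For uniqueness, suppose $\hat\huaM_1$ and $\hat\huaM_2$ are two objects in $\Mod_{\huaS_{\O_E}}^{\varphi,\hat G}$ with the same ambient Kisin module $\huaM((r_{i,j});\hat a)$ and both with crystalline $\That$. Restricting to $G_\infty = G_{K_\infty}$, the two associated representations $\That(\hat\huaM_1)$ and $\That(\hat\huaM_2)$ become isomorphic, since restriction of $\That$ to $G_\infty$ depends only on the ambient Kisin module. By Kisin's theorem that restriction from crystalline $G_K$-representations to $G_\infty$-representations is fully faithful, the two characters coincide as $G_K$-representations, which forces the two $\hat G$-structures to agree. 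The only step that needs any real care is the explicit computation of the Kisin module of $\psi_{i,j}$; the rest is formal once one has set up the fundamental crystalline characters correctly in the tame case.
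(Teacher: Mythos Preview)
Your proof of (1) is fine and matches the paper.

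For (2), your approach is correct but goes in the opposite direction to the paper's. You start from the crystalline side: construct each $\psi_{i,j}$ via its filtered $\varphi$-module, then compute its Kisin module and check it is $\huaM((\mathbbm{1}_{i,j});1)$. The paper instead starts from the Kisin module side: it takes $\huaM((\mathbbm{1}_{i,j});1)$, observes it has height $1$, and then invokes the adapted-basis theorem (Theorem~\ref{shapecorollary}) already proved in the paper, together with the argument of \cite[Lem.~6.3]{GLS14}, to conclude that $T_{\huaS}(\huaM((\mathbbm{1}_{i,j});1))$ extends to a crystalline character of $G_K$ with the required labelled Hodge--Tate weights. The rest (tensoring and twisting by $\lambda_{\hat a}$) is the same in both approaches.

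The paper's route is cleaner because it recycles the main structural result of \S\ref{section: filtration of kisin modules}; in rank $1$, Theorem~\ref{shapecorollary} directly yields that any crystalline character with $\HT_{i,j}=\{1\}$ and all other weights $0$ has Kisin module of the form $\huaM((\mathbbm{1}_{i,j});b)$ for some unit $b$, which is the step you propose to do by hand. Your direct computation of the Kisin module from the filtered $\varphi$-module would work, but getting precisely the factor $(u-\pi_{ij})$ (rather than merely an element of the correct Newton slope) is exactly the rank-$1$ content of Theorem~\ref{shapecorollary}, so you would end up reproving that special case. Your uniqueness argument via full faithfulness of restriction to $G_{K_\infty}$ on crystalline representations is correct, and is the implicit reason the paper says ``it suffices to show existence''.
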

\begin{proof}
(1) is easy. For (2), we imitate the proof of \cite[Lem.~6.3]{GLS14}. It suffices to show existence of $\hat \huaM((r_{i, j}); \hat a)$. Similar as in \textit{loc. cit.}, consider $\huaM((\mathbbm{1}_{i, j}); 1)$ where $(\mathbbm{1}_{i, j})$ is the matrix where the only nonzero element is in the $(i, j)$-position, which is $1$. This is a height $1$ Kisin module, and similarly as in the argument of \textit{loc. cit.} (by applying our Theorem \ref{shapecorollary}), $T_{\huaS}(\huaM((\mathbbm{1}_{i, j}); 1))$ extends to a crystalline character $\psi_{i, j}$ such that $\HT_{i', j'}(\psi_{i, j})=\{0\}$ for $(i',j')\neq (i, j)$ and $\HT_{i, j}(\psi_{i, j})=\{1\}$. Then one can continue the argument as in \cite[Lem.~6.3]{GLS14} to conclude.
\end{proof}

\section{Shape of upper triangular Kisin modules with $k_E$-coefficients} \label{section: shape kisin}
In this section, when $K/\Qp$ is ramified of degree $e=2$ (in particular, $K/\Qp$ is tamely ramified since $p>2$), we study the shape of upper triangular Kisin modules (with $k_E$-coefficients) coming from reductions of crystalline representations. We divide this section into two steps. In the first step, we determine the information on the diagonal of the matrix of $\varphi$ for $\barm$. In the second step, we determine the structure of the full matrix of $\varphi$.

\subsection{Shape of the diagonal} We first list some very elementary linear algebra lemmas. By writing out these lemmas first, it will make the proof of our main result Proposition \ref{tameinertia} more transparent (in particular, we would not need to introduce too many notations there).

\begin{lemma} \label{lemma: property Z}
Let $A \in \GL_d(k_E + u^{\Delta}k_E\llb u\rrb)$ for some $\Delta \geq 0$, where $\GL_d(k_E + u^{\Delta}k_E\llb u\rrb)$ denotes the set of invertible matrices with all elements in the ring $k_E + u^{\Delta}k_E\llb u\rrb$. Then there exists a unique unipotent (i.e., upper triangular with all diagonal elements being $1$) matrix $C \in \GL_d(k_E)$ such that $AC=(m_{x, y})$ satisfies the following: there exists an ordering $\{k_1, \ldots, k_d\}$ of $\{1, \ldots, d\}$, such that for each $1 \leq x \leq d$,
\begin{itemize}
\item $m_{k_x, x}$ is the top most element in $\col_x(AC)$ that is a unit, and
\item the elements above $m_{k_x, x}$ satisfy $u^{\Delta} \mid m_{z, x} , \forall z<k_x$ (note that when $\Delta=0$, then $m_{z, x}=0 , \forall z<k_x$),
\item the elements to the right of $m_{k_x, x}$ satisfy $m_{k_x, y}=0, \forall y>x$.
\end{itemize}
\end{lemma}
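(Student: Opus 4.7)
The plan is to establish this by column-echelon reduction: processing the columns of $A$ from left to right, I would choose the pivot rows $k_1, \ldots, k_d$ and build the unipotent $C$ as a product of elementary unipotent matrices over $k_E$. Set $R := k_E + u^\Delta k_E\llb u\rrb$; this is a local ring whose units are precisely those elements of nonzero constant term and whose maximal ideal $\mathfrak{m}$ equals $u^\Delta k_E\llb u\rrb$ for $\Delta \geq 1$ (and $u k_E\llb u\rrb$ for $\Delta = 0$), with residue field $k_E$. In particular, the reduction $\overline{A} \in \GL_d(k_E)$ is invertible, which is the tool that will make the inductive selection of pivots work.

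I would first take $k_1$ to be the smallest row index with $A_{k_1, 1}$ a unit of $R$ (such an index exists by invertibility of $\overline{A}$); entries $A_{z, 1}$ with $z < k_1$ then lie in $\mathfrak{m}$, giving the second bullet for column $1$. To clear row $k_1$ in the columns to the right, write $A_{k_1, y} = \alpha_y + u^\Delta \beta_y$ with $\alpha_y \in k_E$, and define $c_{1, y} := -\alpha_y \alpha_1^{-1} \in k_E$ for $y \geq 2$; the unipotent $C^{(1)} \in \GL_d(k_E)$ with first-row entries $(1, c_{1,2}, \ldots, c_{1,d})$ then makes $(AC^{(1)})_{k_1, y}$ have vanishing constant term, hence in $\mathfrak{m}$. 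Next I would induct on $d$, applying the lemma to the $(d-1) \times (d-1)$ submatrix of $AC^{(1)}$ obtained by deleting row $k_1$ and column $1$. This submatrix is invertible over $R$ by a Schur-complement argument: left-multiplying $AC^{(1)}$ by a lower unipotent matrix with $R$-entries (killing the column-$1$ entries outside row $k_1$, using that $A_{k_1,1}$ is a unit) preserves the determinant and puts the matrix into block form, so that the submatrix's determinant equals $\det(A) / A_{k_1,1}$, a unit. The induction produces an ordering $\{k_2, \ldots, k_d\}$ on the remaining rows together with a unipotent $C'' \in \GL_{d-1}(k_E)$; extending $C''$ by the identity on column $1$ gives $C^{(2)} \in \GL_d(k_E)$, and I would take $C := C^{(1)} C^{(2)}$. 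Since $C^{(2)}$ mixes only columns $\geq 2$ with $k_E$-coefficients, and the row-$k_1$ entries of those columns already lie in $\mathfrak{m}$, the row-$k_1$ clearing achieved earlier is preserved.

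For uniqueness, any admissible pair $(C, \{k_x\})$ reduces modulo $\mathfrak{m}$ to a column-echelon reduction of $\overline{A} \in \GL_d(k_E)$, and both the pivot permutation and the unipotent reducing matrix are uniquely determined over a field; since $C$ already has entries in $k_E$, this forces uniqueness of $C$ itself. The main obstacle I foresee is reconciling the argument with the literal reading of the third bullet, $m_{k_x, y} = 0$ for $y > x$: column operations with coefficients in $k_E$ can only kill the constant-term part of an entry of $R$, and always leave a remainder in $\mathfrak{m} = u^\Delta k_E\llb u\rrb$. I therefore expect the content one actually proves is the weaker $m_{k_x, y} \in u^\Delta k_E\llb u\rrb$ for $y > x$ (parallel to the second bullet's parenthetical adjustment at $\Delta = 0$); checking that this weaker form is exactly what Proposition~\ref{tameinertia} requires in its application would be the last step of the proof.
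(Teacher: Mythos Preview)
Your proposal is correct and follows the natural approach. The paper itself does not give a self-contained argument here: it simply says the lemma ``is extracted from the beginning of the proof of \cite[Lem.~2.4]{Gao15}, where $\Delta=p$,'' so there is nothing substantive to compare against beyond noting that column-echelon reduction is exactly the procedure carried out in that reference.

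Your observation about the third bullet is accurate and worth recording. With $C\in\GL_d(k_E)$ acting on a matrix with entries in $k_E+u^\Delta k_E\llb u\rrb$, one can only annihilate the constant term of $m_{k_x,y}$, so what is actually obtained is $m_{k_x,y}\in u^\Delta k_E\llb u\rrb$ for $y>x$ (parallel to the second bullet). This weaker statement is all that the subsequent Lemmas~\ref{matrix} and~\ref{lemma: Q decomp} require: the arguments there use only that non-pivot entries are non-units (hence divisible by $u^\Delta$), that pivot entries are units, and that the $k_x$ are pairwise distinct---and the last point follows because $m_{k_x,y}\in u^\Delta k_E\llb u\rrb$ is a non-unit while $m_{k_y,y}$ is a unit, forcing $k_x\neq k_y$. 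Your uniqueness argument via reduction to $\overline A\in\GL_d(k_E)$ is also correct, since $C$ already lies in $\GL_d(k_E)$.
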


\begin{proof}
This is extracted from the beginning of the proof of \cite[Lem. 2.4]{Gao15}, where $\Delta=p$.
\end{proof}

When the conclusion of Lemma \ref{lemma: property Z} is satisfied, we say that the matrix $AC$ satisfies (Property Z).

\begin{lemma} \label{matrix}
Let $M_1=M_2M_3M_4$, where
\begin{itemize}
  \item $M_1 \in \Mat_d(k_E\llb u\rrb)$ upper triangular, such that $\diag M_1=[c_1u^{t_1}, \ldots, c_du^{t_d}]$ where $t_i \geq 0, c_i \in k_E\llb u \rrb^{\times}$.
  \item $M_2 \in \Mat_d(k_E\llb u\rrb)$.
  \item $M_3=[u^{r_1}, \ldots, u^{r_d}]$, where $0\leq r_1 \leq \ldots \leq r_d \leq \Delta$ for some $\Delta \geq 0$.
  \item $M_4 \in \GL_d(k_E+u^{\Delta}k_E\llb u\rrb)$, i.e., $M_4=M_5+ u^{\Delta}M_6$ for some $M_5 \in \GL_d(k_E), M_6 \in \Mat_d(k_E\llb u\rrb)$.
\end{itemize}
Then we have:
\begin{enumerate}
  \item There exists an ordering $\{k_1, \ldots, k_d\}$ of $\{1, \ldots, d\}$ such that $r_{k_x} \leq t_x, \forall x$.
  \item  If $M_2$ is furthermore invertible, then $r_{k_x}=t_x, \forall x$.
\end{enumerate}
\end{lemma}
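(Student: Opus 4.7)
The plan is to first normalize $M_4$ by a right column operation so that it takes a ``staircase'' form revealing the permutation $\{k_1,\dots,k_d\}$, and then compare $u$-adic valuations column by column in the identity $M_1 = M_2 M_3 M_4$.

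First, I would apply Lemma~\ref{lemma: property Z} (with this same $\Delta$) to $M_4$ to find a unipotent upper-triangular $C \in \GL_d(k_E)$ such that $M_4 C$ satisfies (Property Z), producing the desired ordering $\{k_1,\dots,k_d\}$ of $\{1,\dots,d\}$. Right-multiplication by $C$ does not disturb $M_1$: since $M_1$ is upper triangular and $C$ is unipotent upper triangular, $M_1 C$ remains upper triangular with the same diagonal $[c_1 u^{t_1}, \dots, c_d u^{t_d}]$. So I may replace $M_4$ by $M_4 C$ and $M_1$ by $M_1 C$, and hence assume $M_4 = (m_{ij})$ itself satisfies Property Z.

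Next, I would fix $x$ and inspect column $x$ of $M_3 M_4$, whose $i$-th entry is $u^{r_i} m_{i,x}$. There are three regimes. For $i = k_x$, Property Z says $m_{k_x,x}$ is a unit, so the entry is $u^{r_{k_x}}$ times a unit. For $i < k_x$, Property Z gives $u^{\Delta} \mid m_{i,x}$, so the entry is divisible by $u^{r_i + \Delta}$, which is at least $u^{r_{k_x}}$ because $\Delta \geq r_d \geq r_{k_x}$. For $i > k_x$, the inequality $r_1 \leq \cdots \leq r_d$ directly gives $r_i \geq r_{k_x}$. Thus column $x$ of $M_3 M_4$ factors as $u^{r_{k_x}} v_x$ for some $v_x \in k_E\llb u \rrb^{d}$ whose $k_x$-th entry is a unit. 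Consequently $(M_1)_{\cdot,x} = u^{r_{k_x}} M_2 v_x$, and reading off the $x$-th row gives
\[
c_x u^{t_x} = u^{r_{k_x}} (M_2 v_x)_x \quad \text{in } k_E\llb u \rrb.
\]
Since $c_x$ is a unit, this forces $t_x \geq r_{k_x}$, proving (1).

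Finally, for (2), I would pass to determinants: if $M_2 \in \GL_d(k_E\llb u \rrb)$ then $\det M_2$ and $\det M_4$ are units in $k_E\llb u \rrb$, so comparing $u$-adic valuations in $\det M_1 = (\det M_2)(\det M_3)(\det M_4)$ yields $\sum_x t_x = \sum_i r_i = \sum_x r_{k_x}$, the last equality because $\{k_x\}$ is a permutation. Combined with the pointwise inequalities from (1), equality must hold term by term. The main obstacle is really Step~2's divisibility bookkeeping: the estimate $r_i + \Delta \geq r_{k_x}$ for rows \emph{above} the staircase relies precisely on the hypothesis $r_d \leq \Delta$ built into $M_3$, which is what makes the Property-Z normalization interact cleanly with $M_3$ and allows the uniform factorization of the $x$-th column by $u^{r_{k_x}}$.
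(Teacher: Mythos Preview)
Your proof is correct and follows essentially the same approach as the paper's: apply Lemma~\ref{lemma: property Z} to $M_4$ to obtain a unipotent $C$ (the paper's $M_7$) so that $M_4C$ has Property~Z, observe that $u^{r_{k_x}}\mid \col_x(M_3M_4C)$ by the three-case split you wrote out, deduce $r_{k_x}\le t_x$ from the diagonal of $M_1C$, and settle (2) by the determinant comparison. The paper only sketches this (deferring details to \cite[Lem.~2.4]{Gao15}), so your write-up is simply a more explicit version of the same argument.
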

\begin{proof}
This is easy generalization of \cite[Lem. 2.4]{Gao15} (where $\Delta=p$). We sketch the proof.
For $M_4$, by Lemma \ref{lemma: property Z}, we can find a unipotent $M_7 \in \GL_d(k_E)$ such that $M_4M_7$ satisfies (Property Z). It is easy to see that $u^{r_{k_x}} \mid \col_x(M_3M_4M_7), \forall x$, where $\{k_1, \ldots, k_d\}$ is the ordering of $\{1, \ldots, d\}$ in the conclusion of Lemma \ref{lemma: property Z}. So $u^{r_{k_x}} \mid \col_x(M_1M_7)$, and so $u^{r_{k_x}} \mid u^{t_x}$ since $M_7$ is unipotent, i.e., $r_{k_x} \leq t_x, \forall x$.

Note that for our Statement (2), we need $M_2$ to be invertible in order to apply the determinant argument at the end of \cite[Lem. 2.4]{Gao15}.
\end{proof}

\begin{lemma} \label{lemma: Q decomp}
With exactly the same notations in Lemma \ref{matrix} (including in its proof). Suppose furthermore that there exists $\delta >0$ such that $r_{x+1}-r_x \geq \delta$ for all $1 \leq x\leq d-1$ and $\Delta-r_d \geq \delta$. Then $M_3M_4M_7=Q[u^{r_{k_1}}, \ldots, u^{r_{k_d}}]$ for some $Q \in \GL_d(k_E+u^{\delta}k_E\llb u \rrb )$.
\end{lemma}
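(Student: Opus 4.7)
My plan is to reuse the unipotent matrix $M_7 \in \GL_d(k_E)$ produced in the proof of Lemma \ref{matrix} and verify directly, by inspecting each entry of
\[
Q := (M_3 M_4 M_7) \cdot [u^{-r_{k_1}}, \ldots, u^{-r_{k_d}}],
\]
that $Q$ has entries in $R := k_E + u^\delta k_E\llb u\rrb$ and that $\det Q \in R^\times$. No new matrix manipulation is needed; the argument is a bookkeeping exercise on top of Property Z of $M_4 M_7$, with the two inequalities $r_{x+1} - r_x \ge \delta$ and $\Delta - r_d \ge \delta$ deployed in exactly the two places where they are forced.

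For the entry check I would write $M_4 M_7 = (m_{z,x})$ and recall from Lemma \ref{lemma: property Z} that, in column $x$: $u^\Delta \mid m_{z,x}$ for $z < k_x$, $m_{k_x,x}$ is a unit, and $m_{k_x, y} = 0$ for $y > x$. The entry $Q_{z,x}$ then equals $u^{r_z - r_{k_x}} m_{z,x}$, and I would split into three cases. When $z < k_x$, the divisibility $u^\Delta \mid m_{z,x}$ combined with $\Delta - r_{k_x} \ge \Delta - r_d \ge \delta$ yields $Q_{z,x} \in u^\delta k_E\llb u\rrb$. When $z > k_x$, the gap hypothesis gives $r_z - r_{k_x} \ge (z - k_x)\delta \ge \delta$, so again $Q_{z,x} \in u^\delta k_E\llb u\rrb$. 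When $z = k_x$, the entry $Q_{k_x,x} = m_{k_x,x}$ lies in $k_E + u^\Delta k_E\llb u\rrb \subseteq R$ (using $\Delta \ge \delta$) because $M_4$ has entries in $k_E + u^\Delta k_E\llb u\rrb$ and $M_7$ has scalar entries.

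For invertibility I would take determinants on both sides of $M_3 M_4 M_7 = Q[u^{r_{k_1}}, \ldots, u^{r_{k_d}}]$: since $\{k_1, \ldots, k_d\}$ is a permutation of $\{1, \ldots, d\}$ and $\det M_7 = 1$, the diagonal factors cancel and $\det Q = \det M_4 \in (k_E + u^\Delta k_E\llb u\rrb)^\times$. The only subsidiary fact needed is the elementary observation that an element of $R$ is a unit of $R$ iff its constant term is nonzero, so the inclusion $(k_E + u^\Delta k_E\llb u\rrb)^\times \subseteq R^\times$ is automatic, and the cofactor formula applied inside $R$ supplies $Q^{-1} \in \Mat_d(R)$. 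I do not foresee a genuine obstacle: the two hypothesis inequalities are exactly tight for the argument, one per off-diagonal case, which is reassuring evidence that this is the intended route.
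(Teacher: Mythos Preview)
Your proof is correct and follows essentially the same route as the paper: an entry-by-entry check using Property~Z of $M_4M_7$, splitting into the three cases $z<k_x$, $z>k_x$, $z=k_x$ and invoking respectively the hypothesis $\Delta-r_d\ge\delta$ and the gap hypothesis $r_{x+1}-r_x\ge\delta$. Your determinant argument for invertibility is in fact more explicit than the paper's, which simply observes that the constant-term matrix of $Q$ has its only nonzero entries (all units) at positions $(k_x,x)$ and leaves the conclusion $Q\in\GL_d(R)$ implicit.
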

\begin{proof}
By the proof in Lemma \ref{matrix}, $u^{r_{k_x}} \mid \col_x(M_3M_4M_7), \forall x$, which shows the existence of $Q$. It suffices to show that $Q \in \GL_d(k_E+u^{\delta}k_E\llb u \rrb )$. Write $Q=(q_{x, y})$, and we now only prove that the elements in $\col_1(Q)$ are in the ring $k_E+u^{\delta}k_E\llb u \rrb$ (for the other columns, just use similar argument). Note that $M_4M_7$ satisfies (Property Z), if we write $M_3M_4M_7 =(g_{i, j})$, then in $\col_1(M_3M_4M_7)$, we have
\begin{itemize}
  \item $ u^{\Delta} \mid g_{k, 1}, \forall k<k_1$, and so $u^{r_{k_1} +\delta } \mid  g_{k, 1}, \forall k<k_1$;
  \item $u^{r_{k_1}} \parallel g_{k_1, 1}$;
  \item  $u^{r_{k}} \mid g_{k, 1}, \forall k>k_1$,  and so $u^{r_{k_1} +\delta} \mid g_{k, 1}, \forall k>k_1$.
\end{itemize}
Since $g_{x, 1} = q_{x, 1}u^{r_{k_1}}$, so in $\col_1(Q)$, we have $u^{\delta} \mid q_{k, 1} \forall k \neq k_1$ and $q_{k_1, 1} \in k_E^{\times}$, and we are done.
\end{proof}

\begin{prop}\label{tameinertia}
With notations in (CRYS), suppose $e=2$ (so we always have $p \nmid e$) and $\rhobar$ is upper triangular. Then $\barm$ is upper triangular, i.e., $\barm \in \mathcal E(\barn_d, \ldots, \barn_1)$, where $\barn_x=\barm(t_{0, x}, \ldots, t_{f-1, x};a_x)$ are some rank-1 modules.
If the following assumptions are satisfied:
\begin{itemize}
  \item   $r_{i, 0, x+1}-r_{i, 0, x} \geq r_{i, 1, d}, \forall x, i$, and
  \item  $r_{i, 0, d}+r_{i, 1, d} \leq p, \forall i$.
\end{itemize}
Then we have $t_{i, x} =r_{i,0, \sigma_{i, 0}(x)}+ r_{i,1, \sigma_{i, 1}(x)} \forall i, x$, where $\sigma_{i, 0}, \sigma_{i, 1}$ are orderings of $\{1, \ldots, d\}$.
\end{prop}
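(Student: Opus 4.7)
The first assertion --- that $\barm$ is upper triangular with rank-one successive quotients of the form $\barn_x = \barm(t_{0,x},\ldots,t_{f-1,x}; a_x)$ --- follows from the standard antiequivalence between $\Mod_{\huaS_{k_E}}^{\varphi}$ (on the étale side) and $k_E$-representations of $G_{K_\infty}$, together with the classification of rank-one objects recorded in Lemma \ref{rank1}: the upper-triangularity of $\rhobar$ is inherited by $\rhobar|_{G_{K_\infty}}$ and hence by $\barm$. To read off $t_{i,x}$, I would combine an upper triangular basis $\boldf_i$ of $\barm_i$ adapted to this filtration (in which $\varphi$ is represented by an upper triangular matrix $T_i$ with diagonal $[c_{i,x} u^{t_{i,x}}]$, $c_{i,x} \in k_E^{\times}$) with the adapted basis $\bolde_i$ produced by Theorem \ref{shapecorollary}. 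Reducing modulo $\omega_E$, and using $\overline{Y}_i = \Id$ together with the fact that each $\pi_{ij}$ reduces to $0$ in $k_E$ (so that $\Lambda_{i,j} \equiv \diag(u^{r_{i,j,x}})$), one obtains, for $P_i \in \GL_d(k_E\llb u \rrb)$ the base-change matrix $\boldf_i = \overline{\bolde}_i P_i$, the matrix identity
$$T_i \;=\; P_i^{-1}\,\overline{X}_i\,\Lambda_{i,1}\,\overline{Z}_{i,1}\,\Lambda_{i,0}\,\varphi(P_{i-1}).$$

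The core of the proof is a two-stage application of the preceding linear-algebra lemmas. In the first stage I would apply Lemma \ref{lemma: Q decomp} to the right-hand piece $\Lambda_{i,0}\,\varphi(P_{i-1})$, taking $M_3 = \Lambda_{i,0}$ and $M_4 = \varphi(P_{i-1})$. The key observation is that $\varphi(u) = u^p$ forces $\varphi(P_{i-1}) \in \GL_d(k_E + u^p k_E\llb u \rrb)$, so I may take $\Delta = p$; with $\delta := r_{i,1,d}$, the hypotheses $r_{i,0,x+1} - r_{i,0,x} \geq r_{i,1,d}$ and $r_{i,0,d} + r_{i,1,d} \leq p$ are exactly the separation and tail conditions that Lemma \ref{lemma: Q decomp} requires. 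This produces a unipotent $M_7 \in \GL_d(k_E)$, a permutation $\sigma_{i,0}$ of $\{1,\dots,d\}$, and a matrix $Q \in \GL_d(k_E + u^{r_{i,1,d}} k_E\llb u \rrb)$ such that $\Lambda_{i,0}\,\varphi(P_{i-1})\,M_7 = Q \cdot [u^{r_{i,0,\sigma_{i,0}(x)}}]_x$. Substituting, and noting that right-multiplication by the upper-triangular unipotent $M_7 \in \GL_d(k_E)$ preserves both the upper-triangular shape and the diagonal of $T_i$, we obtain
$$T_i M_7 \;=\; W \cdot [u^{r_{i,0,\sigma_{i,0}(x)}}]_x, \qquad W \;:=\; P_i^{-1}\,\overline{X}_i\,\Lambda_{i,1}\,\overline{Z}_{i,1}\,Q.$$

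In the second stage, a column-by-column comparison shows that $W$ is itself upper triangular with diagonal $[c_{i,x}\, u^{t_{i,x} - r_{i,0,\sigma_{i,0}(x)}}]$: the entries of $W$ a priori lie in $k_E\llb u\rrb$, and from $(T_i M_7)_{a,b} = W_{a,b}\, u^{r_{i,0,\sigma_{i,0}(b)}}$ together with the integrality of $k_E\llb u\rrb$, the upper-triangularity of $T_i M_7$ forces $W_{a,b} = 0$ for $a>b$ and $t_{i,x} \geq r_{i,0,\sigma_{i,0}(x)}$ on the diagonal. The decomposition $W = (P_i^{-1} \overline{X}_i) \cdot \Lambda_{i,1} \cdot (\overline{Z}_{i,1} Q)$ then matches the setup of Lemma \ref{matrix}(2) verbatim: the left factor is invertible, $\Lambda_{i,1}$ has strictly increasing $u$-exponents bounded by $r_{i,1,d}$, and $\overline{Z}_{i,1} Q \in \GL_d(k_E + u^{r_{i,1,d}} k_E\llb u \rrb)$ since $\overline{Z}_{i,1}$ is constant. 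Lemma \ref{matrix}(2) therefore furnishes a permutation $\sigma_{i,1}$ with $r_{i,1,\sigma_{i,1}(x)} = t_{i,x} - r_{i,0,\sigma_{i,0}(x)}$, which is precisely the desired formula $t_{i,x} = r_{i,0,\sigma_{i,0}(x)} + r_{i,1,\sigma_{i,1}(x)}$.

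The main technical obstacle I anticipate is the first stage: one must observe that $\varphi(P_{i-1})$ lands in the restricted ring $k_E + u^p k_E\llb u\rrb$, so that Lemma \ref{lemma: Q decomp} applies with $\Delta = p$ and $\delta = r_{i,1,d}$, and the two hypotheses of the proposition are the minimal constraints making this work. Once this preliminary push-through of $\Lambda_{i,0}$ is in place, the second stage is essentially \cite[Lem.~2.4]{Gao15}; what is genuinely new here — and what was absent in the dimension-two case — is the two-step splitting of the exponent $t_{i,x}$ as a \emph{sum} coming from the two Hodge--Tate levels $j=0$ and $j=1$.
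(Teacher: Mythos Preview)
Your proposal is correct and follows essentially the same route as the paper's proof. The paper also starts from the adapted-basis identity of Theorem \ref{shapecorollary} reduced mod $\omega_E$, changes to an upper-triangular basis (your $P_i^{-1}$ is the paper's base-change matrix $T_i$, and your $T_i$ is the paper's $F_i$), observes that the Frobenius twist lands in $\GL_d(k_E + u^p k_E\llb u\rrb)$, applies Lemma \ref{lemma: Q decomp} with $\Delta = p$ and $\delta = r_{i,1,d}$ to peel off $\Lambda_{i,0}$ (yielding your $Q$ and the paper's $B$, which is your $W$), and then finishes with Lemma \ref{matrix}(2) applied to $W = (P_i^{-1}\overline{X}_i)\Lambda_{i,1}(\overline{Z}_{i,1}Q)$ with $\Delta = r_{i,1,d}$; the only cosmetic difference is that the paper first invokes Lemma \ref{matrix}(1) to name the ordering before citing Lemma \ref{lemma: Q decomp}, whereas you go straight to the latter.
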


\begin{proof}
Via Theorem \ref{shapecorollary}, we have
$\varphi(\bolde_{i-1}) = \bolde_{i} X_i  \Lambda_{i,1} Z_{i,1} \Lambda_{i,0}$ for some basis $\bolde_i$ of $\barm_i$.
Here, by $X_i,  \Lambda_{i,1}, Z_{i, 1}, \Lambda_{i,0},$ we really mean their reductions modulo $\omega_E$.
Similarly as in \cite[Prop. 2.3]{Gao15}, $\barm$ is upper triangular, and there exists another basis $\boldf_i$ of $\barm_i$ such that
$\varphi(\boldf_{i-1}) = \boldf_{i} F_i$ where $F_i$ is upper triangular with $\diag F_i=[(a_x)_i u^{t_{i, x}}]_{x=1}^d $. Suppose $\bolde_i=\boldf_i T_i$, then we have
$ F_i =T_i X_i  \Lambda_{i,1} Z_{i,1} \Lambda_{i,0} \varphi(T_{i-1}^{-1}). $
Now, let us drop $i$ from all the subscripts, so
$$F=TX\Lambda_1Z_1\Lambda_0S,$$
where $S=\varphi(T_{i-1}^{-1}) \in \GL_d(k_E\llb u^p \rrb) \subset \GL_d(k_E+u^pk_E\llb u\rrb)$.
Now we can apply Lemma \ref{matrix}(1), where we let $M_1=F, M_2=TX\Lambda_1Z_1, M_3=\Lambda_0, M_4=S$ with $\Delta=p$, then we have $r_{0, k_x} \leq t_x$, so we can write $t_x=r_{0, k_x} +\gamma_x$ for some $\gamma_x \geq 0, \forall x$.

Also by the proof of Lemma \ref{matrix}(1), there exists a unipotent matrix $M_7$ such that $FM_7=B[u^{r_{0, k_x}}]_{x=1}^d$ and $\Lambda_0SM_7=Q[u^{r_{0, k_x}}]_{x=1}^d$ for some $B$ and $Q$. We must have $B \in \Mat_d(k_E\llb u\rrb)$ is upper triangular (since $M_7$ is upper triangular), and $Q\in \GL_d(k_E+ u^{r_{i, 1, d}}k_E\llb u\rrb)$ (by applying Lemma \ref{lemma: Q decomp} with $\delta=r_{i, 1, d}$).

So we have $FM_7=B[u^{r_{0, k_x}}]_{x=1}^d=TX\Lambda_1Z_1Q[u^{r_{0, k_x}}]_{x=1}^d$, and so $B=TX\Lambda_1Z_1Q$. Now we can apply Lemma \ref{matrix}(2), where we let $M_1=B, M_2=TX, M_3=\Lambda_1, M_4=Z_1Q$ with $\Delta=r_{i, 1, d}$ (note that $Z_1 \in \GL_d(k_E)$ by Theorem \ref{shapecorollary}).
so we have that
$u^{r_{1, k_x'}} \parallel b_{x, x} $  on the diagonal of $B$, where $\{k_1', \ldots, k_d'\}$ is an ordering of $\{1, \ldots, d\}$.
Since $M_7$ is unipotent, we finally have that $u^{r_{0, k_x} + r_{1, k_x'}  } \parallel f_{x, x}$ on the diagonal of $F$. That is, $t_{x}=r_{0, k_x} + r_{1, k_x'} $.
\end{proof}

\subsection{Shape of upper triangular Kisin modules}
\renewcommand{\labelitemii}{$\diamond$}

Let $X$ be an upper triangular matrix in $\Mat_d(k_E[u])$ of the shape
  $\left(
 \begin{array}{ccccc}
 u^{s_1} & & x_{i,j}\\
         & \ddots & \\
         & & u^{s_d}
 \end{array}
\right),$
where $0 \leq s_i \leq p$ are \emph{distinct} integers.

\begin{itemize}
\item We call $X$ satisfies the property (DEG) if $\deg(x_{i, j}) < s_j, \forall i<j$.

\item We call $X$ satisfies property (P) if $x_{i, j}=u^{s_i}y_{i, j}, \forall i<j$, where
\begin{itemize}
  \item $y_{i, j}=0$ if $s_i > s_j$, and
  \item  $y_{i, j} \in k_E$ if $s_i<s_j$.
\end{itemize}
\end{itemize}

Let $X$ be as above which satisfies (DEG), recall that in the proof of \cite[Lem. 4.3]{Gao15}, we call the following procedure an \textbf{allowable procedure} for $X$:
$$X \rightsquigarrow X'=X(Id - \Mat_d(c_{i, j})),$$
where $1\leq i<j \leq d$ are two numbers such that $s_i <s_j$ and $c_{i, j} \in k_E$; and $\Mat_d(c_{i, j})$ is the matrix where the only nonzero element is at $(i, j)$-position, and the element is precisely $c_{i, j}$.
This allowable procedure has the following properties:
\begin{itemize}
  \item $X'$ still satisfies (DEG).
  \item $X$ satisfies (P) if and only if $X'$ satisfies (P).
  \item When $X$ satisfies (P), one can apply finite times of \emph{allowable procedures} to change $X$ to the diagonal matrix $[u^{s_1}, \ldots, u^{s_d}]$.
\end{itemize}

\begin{lemma}\label{shapelemma}
Let $t_1, \ldots, t_d$ be distinct integers, and let $\delta_1, \ldots, \delta_d$ be nonnegative integers, such that if we let $\delta=\max\{\delta_i\}$, then $|t_{i_1}-t_{i_2}| > \delta, \forall i_1 \neq i_2$.
Let $X$ be an upper triangular matrix in $\Mat_d(k_E[u])$ of the shape
  $$\left(
 \begin{array}{ccccc}
 u^{t_1+\delta_1} & & x_{i,j}\\
         & \ddots & \\
         & & u^{t_d+\delta_d}
 \end{array}
\right),$$
where $\diag X =[u^{t_i+\delta_i}]_{i=1}^d$. Let $A \in \GL_d(k_E)$.
If we have
\begin{itemize}
  \item $X$ satisfies (DEG).
  \item $u^{t_i} \mid \col_i(XA), \forall i.$
\end{itemize}
Then $X=X_1 X_0$, where
\begin{itemize}
  \item $X_0$ is upper triangular, $\diag X_0 =[u^{t_i}]_{i=1}^d$, and satisfies property (P).
  \item $X_1$ is upper triangular, $\diag X_1 =[u^{\delta_i}]_{i=1}^d$, and satisfies (DEG).
  \item $X_0A=B[u^{t_i}]_{i=1}^d$ for some $B \in \GL_d(k_E+ u^{\delta}k_E\llb u\rrb)$.
\end{itemize}
\end{lemma}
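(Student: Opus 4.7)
The plan is to build $X_0$ and $X_1$ simultaneously by induction on the ``distance'' $j-i$ from the diagonal, exploiting the fact that the separation hypothesis $|t_a-t_b|>\delta$ partitions the relevant polynomials into disjoint ``windows'' $[t_k,\,t_k+\delta_k)$ in which the various contributions to the product $X_1X_0$ live.

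The central observation is that, for $i<j$, expanding $X=X_1X_0$ at position $(i,j)$ gives
\[
x_{i,j} \;=\; u^{\delta_i}(X_0)_{i,j} \;+\; \sum_{i<k<j} (X_1)_{i,k}(X_0)_{k,j} \;+\; (X_1)_{i,j}\,u^{t_j}.
\]
If $X_0$ is to satisfy (P), then $(X_0)_{k,j}$ is either $0$ (when $t_k>t_j$) or a monomial $c_{k,j}u^{t_k}$ (when $t_k<t_j$), and similarly for $(X_0)_{i,j}$. If in addition $X_1$ is to satisfy (DEG), then $(X_1)_{i,k}$ has degree $<\delta_k\le\delta$, so the $k$-th summand is supported in the window $[t_k,\,t_k+\delta_k)$, and the $k=i$ contribution collapses to the single monomial degree $t_i+\delta_i$ (if nonzero). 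Because $|t_a-t_b|>\delta$, all these windows are pairwise disjoint. Hence, once the inner sum is already known by induction, the polynomial $x_{i,j}-\sum_{i<k<j}(X_1)_{i,k}(X_0)_{k,j}$ decomposes uniquely into a piece supported at degree $t_i+\delta_i$ and a piece supported in $[t_j,\,t_j+\delta_j)$, yielding canonical candidates for $(X_0)_{i,j}$ and $(X_1)_{i,j}$ with all the required shape properties.

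The main obstacle is to verify the \emph{support claim}: after subtracting the inductively constructed middle sum, the remaining polynomial is genuinely supported only in the windows coming from $k=i$ and $k=j$, i.e.\ all ``off-window'' coefficients vanish. This is exactly where the hypothesis $u^{t_j}\mid\col_j(XA)$ is needed. Expanding $\col_j(XA)=\sum_\ell a_{\ell,j}\col_\ell(X)$ and matching degrees across the various (disjoint!) windows produces, together with $A\in\GL_d(k_E)$, a triangular linear system in $k_E$ whose solution forces the off-window coefficients of $x_{i,j}$ to be zero. I expect this coefficient tracking---run in tandem with the inductive construction---to be the technically delicate core of the proof.

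Once $X_0$ and $X_1$ are constructed with the claimed properties, the identity $X_0A=B\,[u^{t_1},\dots,u^{t_d}]$ reduces to a direct entrywise computation. Writing $(X_0)_{i,k}=u^{t_i}y_{i,k}$ with $y_{i,k}\in k_E$ or $0$ (from (P), and $y_{i,i}=1$), we obtain $(X_0A)_{i,j}=u^{t_i}\sum_{k\ge i} y_{i,k}a_{k,j}$, so $B_{i,j}=u^{t_i-t_j}\cdot\bigl(\sum_{k\ge i} y_{i,k}a_{k,j}\bigr)$. For $i=j$ this gives $B_{i,i}\in k_E$; for $t_i>t_j$ the factor $u^{t_i-t_j}\in u^{\delta+1}k_E\llb u\rrb$ places $B_{i,j}$ in $u^\delta k_E\llb u\rrb$; and for $t_i<t_j$ the would-be negative powers of $u$ force the constant sum to vanish---a vanishing equivalent to $u^{t_j}\mid(X_0A)_{i,j}$, which is built into the construction via the support claim. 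Invertibility of $B$ in $\GL_d(k_E+u^\delta k_E\llb u\rrb)$ then follows from the unit diagonal $[\,B_{i,i}\,]_{i}$, which is the diagonal of $A\in\GL_d(k_E)$ up to unit contributions from higher $(X_0)_{i,k}$, modulo $u^\delta$.
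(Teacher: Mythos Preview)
Your approach is genuinely different from the paper's, and the window idea is sound, but the proposed induction on $j-i$ has a real gap at exactly the point you flag as ``technically delicate.''

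\textbf{Where the induction breaks.} To verify the support claim for $x_{i,j}$ you need the divisibility $u^{t_j}\mid (XA)_{i,j}=\sum_{\ell\ge i} a_{\ell,j}\,x_{i,\ell}$. This relation couples $x_{i,j}$ with \emph{all} $x_{i,\ell}$, including those with $\ell>j$, whose distance $\ell-i$ exceeds $j-i$. At the inductive step $j-i$ you know nothing about the low-degree coefficients of those $x_{i,\ell}$, so you cannot isolate the off-window coefficients of $x_{i,j}$ from this single relation. Saying ``the linear system is triangular'' does not help unless you specify an ordering, and there is no ordering compatible with the distance filtration that makes it so: the unknowns in row $i$ at every distance appear in every column constraint for that row. (Concretely: with $d=3$ and $t_1<t_3<t_2$, the constraint $u^{t_3}\mid (XA)_{1,3}$ involves $x_{1,2}$ and $x_{1,3}$ simultaneously; one must first use the column-$2$ constraint to pin down the low part of $x_{1,2}$, and that in turn requires knowing that $a_{3,2}=0$, which comes from row~$3$.)

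\textbf{What the paper does instead.} The paper proves the lemma by induction on $d$, not on $j-i$, and uses the ``allowable procedures'' $X\rightsquigarrow X(\mathrm{Id}-\mathrm{Mat}_d(c_{i,j}))$, $A\rightsquigarrow(\mathrm{Id}-\mathrm{Mat}_d(c_{i,j}))^{-1}A$ to normalise $X$ while preserving the hypotheses. It splits into two cases. In Case~1 ($t_d$ maximal) one works down the last column: from $u^{t_d}\mid (XA)_{d-1,d}$ one extracts the monomial $u^{t_{d-1}+\delta_{d-1}}$-part of $x_{d-1,d}$ and kills it by an allowable procedure, forcing $a_{d-1,d}=0$; iterating clears $a_{i,d}$ for $i<d$ and leaves $x_{i,d}=u^{t_d}x'_{i,d}$ with $\deg x'_{i,d}<\delta_d$, after which one peels off the last row/column and applies the inductive hypothesis to the $(d-1)\times(d-1)$ block. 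In Case~2 ($t_1$ maximal) one first shows $a_{\ell,1}=0$ for $\ell>1$ from the bottom rows, applies induction to the lower-right block, and then reads off the first row using the maximality of $t_1$. The general case interleaves these two reductions. This dimension-reduction sidesteps precisely the coupling that blocks your induction: at each step only one new row or column is in play, and the relevant entries of $A$ are forced to vanish before the inductive hypothesis is invoked.

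\textbf{Minor point.} Your invertibility argument for $B$ via ``unit diagonal'' is not quite right: $B_{i,i}=a_{i,i}+\sum_{k>i,\,t_k>t_i} y_{i,k}a_{k,i}$ need not equal $a_{i,i}$. The clean argument is determinantal: $\det(X_0)\det(A)=\det(B)\,u^{\sum t_i}$ and $\det(X_0)=u^{\sum t_i}$ give $\det(B)=\det(A)\in k_E^\times$.

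Your window picture is correct and could likely be pushed through with a different organising principle (e.g.\ process rows from $i=d$ upward, at each $i$ ordering the column constraints by decreasing $t_j$), but as written the induction on $j-i$ does not close.
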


\begin{proof}
It is clear that \cite[Lem. 4.3]{Gao15} is the special case of our lemma for $\delta=0$, and the proof of our lemma is very similar to \textit{loc. cit.}. Let us first remark here that it is easy to see if we apply an allowable procedure to $X$, namely, change $X$ to some $X(Id - \Mat_d(c_{i, j}))$, and change $A$ to $(Id - \Mat_d(c_{i, j}))^{-1}A$. Then the conclusion of the lemma still holds.

We prove the lemma by induction. The case $d=1$ is trivial. Suppose it is true for dimension less than $d$, and now let the dimension become $d$. Similarly as in \textit{loc. cit.}, we prove two special cases first. Note here that $t_1+\delta_1, \ldots, t_d+\delta_d$ are distinct integers, and whenever we have $t_i>t_j$, we must have $t_i+\delta_i > t_j +\delta_j$, and in fact $t_i >t_j +\delta_j$.

\textbf{(Case 1)}. Suppose $t_d$ is maximal in $\{t_1, \ldots, t_d\}$. Note that we must have $t_d+\delta_d=\max \{t_1+\delta_1, \ldots, t_d+\delta_d\}$.

Because $u^{t_d} \mid X  \left(
 \begin{array}{ccccc}
 a_{1, d}  \\
    \vdots     \\
  a_{d, d}
 \end{array}
\right), $
so we have $u^{t_d} \mid u^{t_{d-1}+\delta_{d-1}}a_{d-1, d} + x_{d-1, d}a_{d, d}$. Note here that $\deg(x_{d-1, d})<t_d+\delta_d$ (\emph{not} $<t_d$ as in \cite[Sublem. 4.5]{Gao15}), so we can \emph{not} conclude that $u^{t_{d-1}+\delta_{d-1}}a_{d-1, d} + x_{d-1, d}a_{d, d}=0$ as in \textit{loc. cit.}. However, we still have $a_{d, d}\neq 0$. Since otherwise, we have $a_{d-1, d}=0$, and similarly as in \textit{loc. cit.}, we can then show that $a_{i, d}=0, \forall i$, which is impossible.
So we must have
$$x_{d-1, d} =u^{t_{d-1}+\delta_{d-1}}y_{d-1, d} +u^{t_d}x'_{d-1, d}$$
for some $y_{d-1, d} \in k_E, x'_{d-1, d} \in k_E[u]$ with $\deg(x'_{d-1, d})<\delta_d$.
Then similarly as in \textit{loc. cit.}, we can apply the following allowable procedure:
$$ X \rightsquigarrow X (Id - \Mat_d(y_{d-1, d})), \quad A \rightsquigarrow (Id - \Mat_d(y_{d-1, d}))^{-1}A,$$
Then $a_{d-1, d}$ becomes $0$, and $x_{d-1, d}$ becomes $u^{t_d}x'_{d-1, d}$.

Repeat the above argument and operations, in the end we will have
$$ X A = \left(
 \begin{array}{ccccc}
 X_{d, d} & u^{t_d}(x'_{i, d})_{i=1}^{d-1}  \\
 0 & u^{t_d+\delta_d}
 \end{array}
\right)
\left(
 \begin{array}{ccccc}
 A_{d, d} & 0  \\
 (a_{d, j})_{j=1}^{d-1} & a_{d, d}
 \end{array}
\right)
=
\left(
 \begin{array}{ccccc}
 X_{d, d}A_{d, d}+u^{t_d}P & u^{t_d}a_{d, d}(x'_{i, d})_{i=1}^{d-1}  \\
 u^{t_d+\delta_d}(a_{d, j})_{j=1}^{d-1} & u^{t_d+\delta_d}a_{d, d}
 \end{array}
\right),
$$
where $P$ is the $(d-1)\times (d-1)$ matrix formed by the product of the $(d-1)\times 1$-matrix $(x'_{i, d})_{i=1}^{d-1}$ and the $1\times (d-1)$-matrix $(a_{d, j})_{j=1}^{d-1}$.
Then we clearly have $u^{t_i} \mid \col_i(X_{d, d}A_{d, d})$ (since $t_d$ is maximal), so by induction hypothesis, we can decompose $X_{d, d}=(X_{d, d})_1(X_{d, d})_0$, and $(X_{d, d})_0 A_{d, d}=B_{d, d}[u^{t_i}]_{i=1}^{d-1}$.
So we can decompose
$$ X =X_1X_0  = \left(
 \begin{array}{ccccc}
 (X_{d, d})_1 &  (x'_{i, d})_{i=1}^{d-1}  \\
 0 & u^{ \delta_d}
 \end{array}
\right)
\left(
 \begin{array}{ccccc}
 (X_{d, d})_0 & 0  \\
 0 & u^{ t_d}
 \end{array}
\right),
$$
and so $X_1$ satisfies (DEG).
We also have that
$$ X_0A   = \left(
 \begin{array}{ccccc}
 (X_{d, d})_0 &  0  \\
 0 & u^{ t_d}
 \end{array}
\right)
\left(
 \begin{array}{ccccc}
 A_{d, d}  &  0  \\
 (a_{d, j})_{j=1}^{d-1} & a_{d, d}
 \end{array}
\right)
=\left(
 \begin{array}{ccccc}
 B_{d, d}  &  0  \\
 (u^{t_d-t_j}a_{d, j})_{j=1}^{d-1} & a_{d, d}
 \end{array}
\right)
\left(
 \begin{array}{ccccc}
 [u^{t_i}]_{i=1}^{d-1} &  0  \\
 0 & u^{t_d}
 \end{array}
\right),
$$
so $B \in \GL_d(k_E+ u^{\delta}k_E\llb u\rrb)$, and we are done for the proof of Case 1.

\textbf{(Case 2)}. Suppose $t_1$ is maximal in $\{t_1, \ldots, t_d\}$. Similarly as in \cite[Sublem. 4.6]{Gao15}, the situation is:
$$ X A = \left(
 \begin{array}{ccccc}
 u^{t_1+\delta_1} & (x_{1, j})_{j=2}^d\\
 0 & X_{1, 1}
 \end{array}
\right)
\left(
 \begin{array}{ccccc}
 a_{1, 1} & (a_{1, j})_{j=2}^d  \\
 0 & A_{1, 1}
 \end{array}
\right)
=
\left(
 \begin{array}{ccccc}
 u^{t_1+\delta_1}a_{1, 1} & u^{t_1+\delta_1}(a_{1, j})+(x_{1, j})A_{1, 1}  \\
 0 & X_{1, 1}A_{1, 1}
 \end{array}
\right).
$$

Then we can apply induction hypothesis on $X_{1, 1}$, and write $X_{1, 1}= (X_{1, 1})_1 (X_{1, 1})_0$ as in the conclusion of our lemma. Since $(X_{1, 1})_0$ satisfies (P), we can apply finite times of allowable procedures to change $(X_{1, 1})_0$ to the diagonal matrix $[u^{t_2}, \ldots, u^{t_d}]$. So we have
$$XA=   \left(
 \begin{array}{ccccc}
 u^{t_1+\delta_1} & (x_{1, j})_{j=2}^d\\
 0 & (X_{1, 1})_1[u^{t_2}, \ldots, u^{t_d}]
 \end{array}
\right)
\left(
 \begin{array}{ccccc}
 a_{1, 1} & (a_{1, j})_{j=2}^d  \\
 0 & A_{1, 1}
 \end{array}
\right).
$$
We claim that we have $u^{t_j} \mid x_{1, j}, \forall 2\leq j \leq d$. To prove the claim, suppose $t_{k_1} = \max \{t_2, \ldots, t_d\}$, so we have
$$u^{t_{k_1}} \mid X\left(
 \begin{array}{ccccc}
 a_{1, k_1}\\
 \vdots\\
 a_{d, k_1}
 \end{array}
\right), \textnormal{and so }
u^{t_{k_1}} \mid (X_{1, 1})_1[u^{t_2}, \ldots, u^{t_d}]
\left(
 \begin{array}{ccccc}
 a_{2, k_1}\\
 \vdots\\
 a_{d, k_1}
 \end{array}
\right).
$$
Using that $(X_{1, 1})_1$ is upper triangular and $t_{k_1}> t_i+\delta_i, \forall i \neq 1, k_1$, we can easily find that $a_{i, k_1} =0, \forall i \neq 1, k_1$. Now we must have $a_{k_1, k_1} \neq 0$ because of the shape of $\col_1(A)$ (the only nonzero element of $\col_1(A)$ is $a_{1, 1}$, so if $a_{k_1, k_1}=0$, then $A$ will not be invertible). Consider the first row of $X \col_{k_1}(A)$, we will have $u^{t_{k_1}} \mid u^{t_1+\delta_1}a_{1, k_1} + x_{1, k_1}a_{k_1, k_1}$, and so $u^{t_{k_1}} \mid x_{1, k_1}$. Now let $t_{k_2} =\textnormal{max} \{\{t_1, \ldots, t_d\}-\{t_1, t_{k_1}\}\}$, and repeat the above argument just similarly as we did in \cite[Sublem. 4.6]{Gao15}. In the end, we will have that $u^{t_j} \mid x_{1, j}, \forall 2 \leq j \leq d$. We also note that for the matrix $A$, we must have $a_{i, j}=0$ unless $t_i \geq t_j$.

So we have
$$X =X_1X_0= \left(\begin{array}{ccccc}
u^{\delta_1} & (\frac{x_{1, j}}{u^{t_j}})_{j=2}^d\\
0 & (X_{1, 1})_1
\end{array} \right)
 \left(\begin{array}{ccccc}
 u^{t_1} & 0\\
 0 & [u^{t_j}]_{j=2}^d
 \end{array} \right),
$$
so $X_1$ satisfies (DEG).
We also have
$$X_0A=
\left(\begin{array}{ccccc}
 u^{t_1} & 0\\
 0 & [u^{t_j}]_{j=2}^d
 \end{array} \right)
\left(\begin{array}{ccccc}
 a_{1, 1} & (a_{1, j})_{j=2}^d\\
 0 & A_{1, 1}
 \end{array} \right)
=
\left(\begin{array}{ccccc}
 a_{1, 1} & (a_{1, j} \frac{u^{t_1}}{u^{t_j}})_{j=2}^d\\
 0 & B_{1, 1}
 \end{array} \right)
\left(\begin{array}{ccccc}
 u^{t_1} & 0\\
 0 & [u^{t_j}]_{j=2}^d
 \end{array} \right),
$$
and so $B \in \GL_d(k_E+ u^{\delta}k_E\llb u\rrb)$.

\textbf{(General case)}. Argue similarly as in \cite[Lem. 4.3]{Gao15}, using the results we obtained in (Case 1) and (Case 2).
\end{proof}

The following lemma is easy corollary of the above lemma. Writing it out will greatly simplify the proof of our main Proposition \ref{shape}.
\begin{lemma} \label{lemma: shape lemma with extra power}
Lemma \ref{shapelemma} still holds if we replace the sentence ``Let $A \in \GL_d(k_E)$" to the sentence ``Let $A \in \GL_d(k_E + u^{\gamma}k_E\llb u\rrb )$ where $\gamma \geq \max\{t_i\}+\max \{\delta_i\}$".
\end{lemma}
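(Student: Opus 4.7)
The plan is to reduce directly to Lemma \ref{shapelemma} via a Taylor expansion of $A$, showing that the extra $u^{\gamma}$-tail contributes only an admissible perturbation that is absorbed into the error ring $k_E + u^{\delta}k_E\llb u\rrb$. First I would write $A = A_0 + u^{\gamma}A'$ with $A_0 \in \Mat_d(k_E)$ and $A' \in \Mat_d(k_E\llb u\rrb)$. Since the ring $k_E + u^{\gamma}k_E\llb u\rrb$ is local with maximal ideal $u^{\gamma}k_E\llb u\rrb$, the invertibility of $A$ forces its constant term $A_0$ to lie in $\GL_d(k_E)$, so $A_0$ is an admissible input for Lemma \ref{shapelemma}.

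Next I would verify that the hypothesis $u^{t_i} \mid \col_i(XA_0)$ holds. From $XA = XA_0 + u^{\gamma} XA'$ and the inequality $\gamma \ge \max\{t_i\} \ge t_i$, the tail $u^{\gamma}XA'$ is automatically divisible by $u^{t_i}$ in each column $i$; combined with $u^{t_i} \mid \col_i(XA)$, this yields $u^{t_i} \mid \col_i(XA_0)$. I can then apply Lemma \ref{shapelemma} to the pair $(X, A_0)$. The output is the factorization $X = X_1 X_0$ with the claimed (DEG), (P), and diagonal shapes (which depend only on $X$ and the exponents, not on the auxiliary matrix), together with an identity $X_0 A_0 = B_0 [u^{t_i}]_{i=1}^d$ for some $B_0 \in \GL_d(k_E + u^{\delta} k_E\llb u \rrb)$.

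Finally, I would promote this to an identity for $A$ itself. Writing $X_0 A = X_0 A_0 + u^{\gamma} X_0 A' = B_0 [u^{t_i}] + u^{\gamma} X_0 A'$, define $B$ by setting $\col_i(B) := \col_i(B_0) + u^{\gamma - t_i} \col_i(X_0 A')$, so that $X_0 A = B [u^{t_i}]_{i=1}^d$. Because $\gamma - t_i \ge \delta$ by the sharp hypothesis $\gamma \ge \max\{t_i\} + \max\{\delta_i\}$, every correction term lies in $u^{\delta} k_E\llb u\rrb$, so $B \in \Mat_d(k_E + u^{\delta} k_E\llb u\rrb)$. For invertibility, I note that $B \equiv B_0 \pmod{u^{\delta}}$, hence (assuming $\delta \ge 1$; the case $\delta = 0$ makes the target ring equal to $k_E\llb u\rrb$ and is trivial) $B \pmod{u} = B_0 \pmod{u} \in \GL_d(k_E)$, and since $k_E + u^{\delta} k_E\llb u\rrb$ is a local subring of $k_E\llb u\rrb$ with residue field $k_E$, this implies $B \in \GL_d(k_E + u^{\delta} k_E\llb u\rrb)$.

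There is no real obstacle beyond careful bookkeeping of exponents; the substantive work lies entirely in Lemma \ref{shapelemma}. The only place the argument could fail is the final divisibility $\gamma - t_i \ge \delta$, and this is precisely why the statement requires the numerical condition $\gamma \ge \max\{t_i\} + \max\{\delta_i\}$ in exactly this form.
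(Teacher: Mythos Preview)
Your proposal is correct and follows essentially the same approach as the paper: decompose $A = A_0 + u^{\gamma}A'$, apply Lemma~\ref{shapelemma} to $(X,A_0)$, then absorb the $u^{\gamma}$-tail into $B$ using $\gamma - t_i \ge \delta$. Your argument is in fact slightly more explicit than the paper's (which simply asserts the final invertibility), but the structure is identical.
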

\begin{proof}
Let $A=A_1 + u^{\gamma}A_2$ where $A_1 \in \GL_d(k_E), A_2 \in \Mat_d(k_E\llb u\rrb)$. Then apparently we have $u^{t_i} \mid \col_i(XA_1)$. So we can apply Lemma \ref{shapelemma} (to the pair $X, A_1$) to conclude $X=X_1X_0$ where $X_0$ satisfy (P) and $X_1$ satisfy (DEG). We also have $X_0A_1 =B_1[u^{t_i}]_{i=1}^d$ for $B_1 \in \GL_d(k_E +u^{\delta}k_E\llb u \rrb)$ by \textit{loc. cit.}. So $X_0A = X_0(A_1 + u^{\gamma}A_2)= (B_1+ X_0A_2[\frac{u^{\gamma}}{u^{t_i}}]_{i=1}^d) [u^{t_i}]_{i=1}^d$. And we clearly have $B_1+ X_0A_2[\frac{u^{\gamma}}{u^{t_i}}]_{i=1}^d \in  \GL_d(k_E +u^{\delta}k_E\llb u \rrb)$.
\end{proof}

\begin{prop} \label{shape}
With notations in (CRYS), suppose $e=2$ and $\rhobar$ is upper triangular.
Suppose furthermore
\begin{itemize}
  \item (A1): $r_{i, 0, x+1}-r_{i, 0, x} > r_{i, 1, d}, \forall x, i$, and
  \item (A2):  $r_{i, 0, d}+r_{i, 1, d} \leq p-2, \forall i$;
\end{itemize}
Then there exists basis $\boldf_i$ of $\barm_i$, such that if we write $\varphi(\boldf_{i-1})=\boldf_i F_i$, then
\begin{itemize}
  \item $F_i$ is upper triangular with $\diag F_i = [(a_x)_i u^{ r_{i, 0, \sigma_{i, 0}(x)} + r_{i, 1, \sigma_{i, 1}(x)}}]_{x=1}^d$ where $\sigma_{i,0}$ and $\sigma_{i,1}$ are orderings of $\{1, \ldots, d\}$; $F_i$ satisfies (DEG); and

  \item $F_i=C_i(F_i)_1(F_i)_0$ where $C_i = [(a_x)_i]_{x=1}^d$ for some $a_x \in k_E^{\times}$; both $(F_i)_1, (F_i)_0$ are upper triangular with $\diag((F_i)_0)=\widetilde \Lambda_{i, 0} =[u^{r_{i, 0, \sigma_{i, 0}(x)}}]_{x=1}^d$ and $\diag((F_i)_1)=\widetilde \Lambda_{i, 1} =[u^{r_{i, 1, \sigma_{i, 1}(x)}}]_{x=1}^d$; furthermore,
  \item Both $(F_i)_1, (F_i)_0$ satisfies property (P).
\end{itemize}
\end{prop}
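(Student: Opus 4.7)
The plan is to refine the basis $\boldf_i$ from Proposition \ref{tameinertia} by two successive applications of Lemma \ref{shapelemma} (or its extension Lemma \ref{lemma: shape lemma with extra power}), one producing $(F_i)_0$ and one producing $(F_i)_1$. First I would take the basis $\boldf_i$ of Proposition \ref{tameinertia}, for which $\varphi(\boldf_{i-1}) = \boldf_i F_i$ with $F_i$ upper triangular and $\diag F_i = [(a_x)_i u^{r_{i,0,\sigma_{i,0}(x)}+r_{i,1,\sigma_{i,1}(x)}}]_{x=1}^d$. Since Assumption (A1) forces these diagonal $u$-powers to be pairwise distinct, one can modify $\boldf_i$ by a strictly upper-triangular unipotent change of basis that reduces each off-diagonal entry $[F_i]_{a,b}$ with $a<b$ modulo $u^{t_{i,b}}$; after this preliminary step $F_i$ satisfies (DEG).

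Next I would factor $F_i = C_i \widehat F_i$ with $C_i = [(a_x)_i]_{x=1}^d$ and apply Lemma \ref{lemma: shape lemma with extra power} to $\widehat F_i$ with the parameters $t_x = r_{i,0,\sigma_{i,0}(x)}$, $\delta_x = r_{i,1,\sigma_{i,1}(x)}$, and $\gamma = p$. Assumption (A1) supplies the strict separation $|t_{i_1}-t_{i_2}| > \delta = r_{i,1,d}$, and Assumption (A2) supplies $\max t_x + \max \delta_x \le r_{i,0,d}+r_{i,1,d} \le p-2 \le \gamma$, so the hypotheses of the lemma are met. The auxiliary matrix $A \in \GL_d(k_E)$ demanded by the lemma is precisely the unipotent matrix $M_7$ produced inside the proof of Proposition \ref{tameinertia}, for which $F_i M_7 = B\,[u^{r_{i,0,\sigma_{i,0}(x)}}]_{x=1}^d$ and therefore $u^{r_{i,0,\sigma_{i,0}(x)}} \mid \col_x(\widehat F_i M_7)$. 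The conclusion of the lemma is a factorization $\widehat F_i = (F_i)_1 (F_i)_0$ with $(F_i)_0$ upper triangular satisfying (P) with diagonal $\widetilde\Lambda_{i,0}$, and $(F_i)_1$ upper triangular satisfying (DEG) with diagonal $\widetilde\Lambda_{i,1}$, so $F_i = C_i (F_i)_1 (F_i)_0$ as required.

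It remains to show that $(F_i)_1$ also satisfies (P). For this I would apply Lemma \ref{shapelemma} a second time, now to $X = (F_i)_1$ with $t_x = r_{i,1,\sigma_{i,1}(x)}$ and $\delta_x = 0$. Distinctness of the $r_{i,1,\sigma_{i,1}(x)}$ makes the separation hypothesis immediate, and (DEG) is already in hand from the first application, so the only real task is to produce some $A' \in \GL_d(k_E)$ with $u^{r_{i,1,\sigma_{i,1}(x)}} \mid \col_x((F_i)_1 A')$. The input is again the identity $F_i M_7 = B\,[u^{r_{i,0,\sigma_{i,0}(x)}}]$ with $B = T_i X_i \Lambda_{i,1} Z_{i,1} Q$ and $Q \in \GL_d(k_E + u^{r_{i,1,d}} k_E\llb u\rrb)$ supplied by Lemma \ref{lemma: Q decomp}; combined with the first-step factorization this writes $(F_i)_1$ (up to a compensating right factor absorbed into $(F_i)_0$) as a product of the form treated by Lemma \ref{matrix}(2), with $M_2 = C_i^{-1} T_i X_i$ invertible and $M_3 = \Lambda_{i,1}$. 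A further application of Lemma \ref{lemma: property Z} to $Z_{i,1} Q$ produces the required unipotent $A' \in \GL_d(k_E)$.

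The main obstacle will be the bookkeeping in this final step: one must verify that the permutation $\sigma_{i,1}$ produced by the second application of the lemma agrees with the one already fixed by the first application, and that the compensating operation on $(F_i)_0$ used to isolate $(F_i)_1$ does not destroy the (P) property of $(F_i)_0$ just obtained. The strict version of Assumption (A1), in contrast with the non-strict version in Proposition \ref{tameinertia}, is exactly what is needed to upgrade the separation hypothesis of Lemma \ref{shapelemma} to a strict inequality; and the extra slack in Assumption (A2) (``$\le p-2$" rather than ``$\le p$") accommodates the $\gamma = p$ parameter arising from $\varphi(T_{i-1}^{-1}) \in \GL_d(k_E + u^p k_E\llb u\rrb)$ in both lemma applications.
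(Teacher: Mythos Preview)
Your overall strategy—two successive applications of Lemma~\ref{lemma: shape lemma with extra power}, the first to peel off $(F_i)_0$ and the second to certify $(F_i)_1$—is exactly the paper's proof. There is, however, a gap in your preliminary (DEG) step: a change of basis $\boldf_i \mapsto \boldf_i U_i$ sends $F_i$ to $U_i^{-1} F_i\,\varphi(U_{i-1})$, not to $F_i U_i$, so one cannot simply perform independent column reductions on each $F_i$ to force $\deg([F_i]_{a,b})<t_{i,b}$. The paper obtains (DEG) by invoking \cite[Prop.~2.2]{Gao15}, and \emph{that} is where the sharpening to $\le p-2$ in (A2) is actually consumed—it rules out the degenerate situation of Statement~(3) of that proposition (the ``model'' problem). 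For the hypothesis $\gamma \ge \max t_x + \max\delta_x$ of Lemma~\ref{lemma: shape lemma with extra power} the weaker bound $r_{i,0,d}+r_{i,1,d}\le p$ would already suffice, so your attribution of the ``extra slack'' in (A2) to the $\gamma=p$ constraint is a misdiagnosis.

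Your anticipated obstacles in the last paragraph dissolve under the paper's bookkeeping for the second application. The point is that Lemma~\ref{shapelemma} outputs not only the factorization $X=X_1X_0$ but also a matrix $B \in \GL_d(k_E + u^{\delta}k_E\llb u\rrb)$ with $X_0 A = B[u^{t_x}]_{x=1}^d$; the paper feeds this $B$ (here $\delta=r_{i,1,d}$) from the first application directly into the second, rather than going back to $Q$ from Lemma~\ref{lemma: Q decomp}. Concretely, cancelling $\widetilde\Lambda_{i,0}$ yields $(F_i)_1\, B W_0 Z_{i,1}^{-1} W_1^{-1} = T_i X_i W_1^{-1}\widetilde\Lambda_{i,1}$ (where $W_j \in \GL_d(k_E)$ conjugates $\Lambda_{i,j}$ to $\widetilde\Lambda_{i,j}$), so Lemma~\ref{lemma: shape lemma with extra power} applies with $X=(F_i)_1$, $A'=B W_0 Z_{i,1}^{-1} W_1^{-1}$, $\gamma = r_{i,1,d}$, and all $\delta_x=0$. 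With $\delta=0$ this is just \cite[Lem.~4.3]{Gao15}, which says $(F_i)_1$ itself satisfies (P). The matrix $(F_i)_0$ is never touched again, and the diagonal of $(F_i)_1$ was already fixed as $\widetilde\Lambda_{i,1}$ by the first step, so neither of the issues you flagged arises.
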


\begin{proof}
The existence of $\boldf_i$ such that $F_i$ satisfies (DEG) is a consequence of \cite[Prop. 2.2]{Gao15}, Proposition \ref{tameinertia}, as well as the assumption (A2). Note that we need assumption (A2) to avoid the situation in Statement (3) of \cite[Prop. 2.2]{Gao15} (see also \cite[\S 3]{Gao15} for some explanation).

Note that by Proposition \ref{tameinertia}, $\diag F_i=[(a_x)_i u^{r_{i, 0, \sigma_0(x) + r_{i,1,\sigma_1(x)}}} ]_{x=1}^d$.   We now prove that $F_i$ satisfies the other properties.
We drop $i$ from the subscripts (except we keep the subscript on $X_i$, to avoid confusion with $X$ in Lemma \ref{shapelemma} and Lemma \ref{lemma: shape lemma with extra power}), so we write similarly as in Proposition \ref{tameinertia}:
$$F = TX_i\Lambda_1 Z_1 \Lambda_0 S.$$
Let $W_0 \in \GL_d(k_E)$ such that $W_0 \Lambda_0 W_0^{-1} = \widetilde{\Lambda}_0= [u^{r_{i, 0, \sigma_0(x)}}]_{x=1}^d$. Then
$$ FC_i^{-1}(C_iS^{-1}W_0^{-1}) = TX_i \Lambda_1 Z_1 W_0^{-1}\widetilde{\Lambda}_0, \textnormal{ recall here } C_i= [(a_x)_i]_{x=1}^d.$$
So we can apply Lemma \ref{lemma: shape lemma with extra power}, where we let $X=FC_i^{-1}, A=C_iS^{-1}W_0^{-1}$ with $\gamma=p$. So we will have $FC_i^{-1}=F_1F_0$ with $F_0$ satisfying property (P). Now it suffices to show that $F_1$ also satisfies property (P).

Note that in our situation, $\delta$ in Lemma \ref{lemma: shape lemma with extra power} is precisely our $r_{i, 1, d}$. So by the conclusion of \textit{loc. cit.}, there exists $B \in \GL_d(k_E +u^{r_{i, 1, d}}k_E\llb u \rrb)$ such that $F_0 C_iS^{-1}W_0^{-1} = B\widetilde{\Lambda}_0$. So we will have $F_1B =TX_i\Lambda_1Z_1W_0^{-1}$, that is $ F_1 B W_0 Z_1^{-1} =TX_i\Lambda_1.$
Now let $W_1 \in \GL_d(k_E)$ such that $W_1 \Lambda_1 W_1^{-1} = \widetilde{\Lambda}_1= [u^{r_{i, 1, \sigma_1(x)}}]_{x=1}^d=\diag F_1$, so we have
$$ F_1 BW_0Z_1^{-1}W_1^{-1} =TX_iW_1^{-1}\widetilde{\Lambda}_1 .$$
Now we can apply Lemma \ref{lemma: shape lemma with extra power} again, where we let $X=F_1, A=B W_0 Z_1^{-1}W_1^{-1}$ with $\gamma=r_{i, 1, d}$ and $\delta=0$ (in this case, as we mentioned in the beginning of the proof of Lemma \ref{shapelemma}, it is indeed \cite[Lem. 4.3]{Gao15}, and we have $X=X_0$ satisfying (P)). So now we conclude that $F_1$ satisfies (P), and we are done.
\end{proof}

\begin{rem} \label{rem:higher e}
We have analysed the shape of $\varphi_{\barm}$ in this section, for $e=2$. It is clear we can in fact generalize this section for \emph{any} tame ramification index $e$, with some additional assumptions. For notational simplicity, let us just discuss about $e=3$ in this remark, the higher $e$ case is similar.

When $e=3$, in order for the arguments of Proposition \ref{tameinertia} and Proposition \ref{shape} to work through, we will need the following assumptions:
\begin{itemize}
  \item (A1)-0 : $r_{i, 0, x+1}-r_{i, 0, x} > r_{i, 1, d} +r_{i, 2, d}, \forall x, i$,
  \item (A1)-1 : $r_{i, 1, x+1}-r_{i, 1, x} > r_{i, 2, d}, \forall x, i$,
  \item (A2): $r_{i, 0, d}+r_{i, 1, d}  +r_{i, 2, d}\leq p-2, \forall i$.
\end{itemize}
Note that Condition (A1)-0 says that the weights in $\HT_{i, 0}$ are enough separated with respect to both $\HT_{i, 1}$ and $\HT_{i, 2}$; and Condition (A1)-1 says that weights in $\HT_{i, 1}$ are enough separated with respect to $\HT_{i, 2}$.

However, these assumptions will never be satisfied for the crystalline representations that will be useful for our global application. Namely, the Serre type crystalline representations in Definition \ref{definition serre type} will never satisfy the (A1)-1 above, so we choose not to write out these generalizations.
\end{rem}

\section{Crystalline liftings and Serre weight conjectures} \label{section: final section lifting}

In this section, we prove our local theorem on crystalline liftings, as well as its application to weight part of Serre's conjectures. The proof of our crystalline lifting theorem follows the same strategy of \cite[Thm. 7.4]{Gao15}. We will freely use notations and results in \cite[\S 5, \S 6, \S 7]{Gao15}. In particular, we will use the various $\Ext(*, *)$'s defined in various categories, and their properties.
In this section, we always assume the ramification index $e=e(K/\Qp)=2$.

\subsection{Crystalline lifting theorem}

Recall that if $\huaL, \huaL' \in \Mod_{\huaS_{\mathcal O_E}}^{\varphi}$, then we can let $\Ext(\huaL', \huaL)$ be the set of short exact sequences $0 \to \huaL \to \huaN \to \huaL' \to 0$ in the category $\Mod_{\huaS_{\mathcal O_E}}^{\varphi}$, modulo the natural equivalence relation as in \cite[Def. 5.2]{Gao15}. We can similarly define the set of $\Ext(*, *)$'s in the categories $\Mod_{\huaS_{\mathcal O_E}}^{\varphi, \Ghat}$, $\Mod_{\huaS_{k_E}}^{\varphi}$, and $\Mod_{\huaS_{k_E}}^{\varphi, \Ghat}$.
These $\Ext(*, *)$'s have natural $\mathcal O_E$-module (or $k_E$-vector space) structures by \cite[Prop. 5.4]{Gao15}.

Now we
define certain set of upper triangular extensions of rank-1 modules in $\Mod_{\huaS_{k_E}}^{\varphi}$, similar to the spirit of \cite[Def. 5.8]{Gao15}.

\begin{defn} \label{defn: phi shape extensions}
Let $\barn_1, \ldots, \barn_d$ (resp. $\barhatn_1, \ldots, \barhatn_d$) be rank-1 modules in $\Mod_{\huaS_{k_E}}^{\varphi}$ (resp. $\Mod_{\huaS_{k_E}}^{\varphi, \hat G}$).
\begin{enumerate}
  \item Let $\mathcal E_{\vshape}(\barn_d, \ldots, \barn_1) \subset \mathcal E(\barn_d, \ldots, \barn_1)$ be the subset consisting of elements $\barm$ such that there exists a basis $\boldf_i$ of $\barm_i$, $\varphi(\boldf_{i-1})=\boldf_i F_i$, and $F_i$ is of the shape in Proposition \ref{shape} for each $i$. That is $F_i = [(a_x)_i]_{x=1}^d(F_i)_1(F_i)_0$, where both $(F_i)_1, (F_i)_0$ satisfy property (P).

  \item Let $\mathcal E_{\vtshape}(\barhatn_d, \ldots, \barhatn_1) \subset \mathcal E(\barhatn_d, \ldots, \barhatn_1)$
 be the subset consisting of elements $\barhatm$ such that there exists a basis $\boldf_i$ of $\barm_i$ such that
   $\varphi(\boldf_{i-1})=\boldf_i F_i$ with $F_i$ of the shape in Proposition \ref{shape}, and
   $\tau(1\otimes_{\varphi}\boldf_i)=(1\otimes_{\varphi}\boldf_i) Z_i$ with $Z_i$ of the shape in \cite[Lem. 5.7]{Gao15}, namely,
        \begin{itemize}
         \item $Z_i=(z_{i, x, y})  \subset \Mat(R\otimes_{\Fp}k_E)$ is upper triangular.
  \item On the diagonal, $v_R(z_{i, x, x}-1) \geq \frac{p^2}{p-1}, \forall x$.
  \item On the upper right corner, $v_R(z_{i, x, y}) \geq \frac{p^2}{p-1}, \forall x<y$.
        \end{itemize}

\item Suppose $\barm \in \mathcal E_{\varphi-\rm shape}(\barn_d, \ldots, \barn_1), \overline{\huaM'} \in \mathcal E_{\varphi-\rm shape}(\barn_{d'}', \ldots, \barn_{1}').$
 We can define the set $\Ext_{\vshape}(\barm, \barm')$ analogously as in \cite[Def. 5.8]{Gao15}. And similarly for $\Ext_{\vtshape}(\barhatm, \barhatm')$.
\end{enumerate}
\end{defn}

\begin{prop} \label{shapesubmodule}
With notations in Definition \ref{defn: phi shape extensions}, we have the following.
  \begin{enumerate}
   \item $\Ext_{\vshape}(\barm, \barm')$ is a sub-vector space of $\Ext(\barm, \barm')$.
   \item  $\Ext_{\vtshape}(\barhatm, \barhatm')$ is a sub-vector space of $\Ext(\barhatm, \barhatm').$
  \end{enumerate}
\end{prop}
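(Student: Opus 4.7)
The plan is to mirror the strategy of \cite[Prop.~5.9]{Gao15}: translate the shape conditions defining $\mathcal E_{\vshape}$ and $\mathcal E_{\vtshape}$ into $k_E$-linear conditions on the off-diagonal block of the matrices representing $\varphi$ (and $\tau$), and then deduce the subspace property from the explicit matrix description of the Baer sum and scalar multiplication on $\Ext$. Concretely, I would first fix bases $\boldf_i$ of $\barm_i$ and $\boldf_i'$ of $\barm_i'$ exhibiting the shapes of Proposition \ref{shape}, giving matrices $F_i, F_i'$ for $\varphi$. Given an extension class in $\Ext_{\vshape}(\barm,\barm')$, choose a representative whose underlying module has a basis compatible with the short exact sequence; the matrix of $\varphi$ then takes the block form $\squaremat{F_i'}{X_i}{0}{F_i}$ for some off-diagonal block $X_i$. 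By the analog of \cite[Prop.~5.4]{Gao15}, the Baer sum corresponds (up to equivalence) to addition of the $X_i$'s and scalar multiplication by $c\in k_E$ to $cX_i$, so it suffices to show that the set of $X_i$'s for which the total block matrix can be brought (by a further change of basis) into the shape required by Proposition \ref{shape} forms a $k_E$-vector subspace of the ambient matrix space.

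This is the heart of the matter, and it holds because property (P) is manifestly a $k_E$-linear condition on upper-triangular off-diagonal entries: each such entry is required to be either zero or a $k_E$-multiple of a prescribed power of $u$. The diagonal $C_i=[(a_x)_i]_{x=1}^d$ is determined by the underlying rank-1 constituents and remains fixed across all extensions of $\barm$ by $\barm'$, so the valuation pattern of the diagonals of $(F_i)_1$ and $(F_i)_0$ in the shape-factorization is also fixed. Consequently the property that the total matrix admits a shape-factorization $C_i (F_i)_1 (F_i)_0$ with both factors satisfying (P) is preserved under addition and $k_E$-scalar multiplication of the off-diagonal block, proving (1). For (2), the same argument applies to the $\tau$-matrix: the conditions in \cite[Lem.~5.7]{Gao15} describe a (multiplicative) neighborhood of the identity that, after subtracting $\Id$, becomes a $k_E$-linear subspace; the Baer sum at the $\tau$-level corresponds at first order to addition of off-diagonal $\tau$-blocks while the diagonal blocks are unchanged, so the valuation conditions $v_R \geq p^2/(p-1)$ are preserved.

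The main technical hurdle is verifying that every class in $\Ext_{\vshape}$ admits a representative basis simultaneously compatible with the short exact sequence and exhibiting the shape, since the definition of $\mathcal E_{\vshape}$ only supplies some shape-exhibiting basis a priori. I would resolve this much as in the proof of Proposition \ref{shape} itself: any shape-exhibiting basis can be modified by a block-upper-triangular change of basis (compatible with the exact sequence and respecting the equivalence relation on extensions) without destroying property (P), by appealing to the allowable procedures introduced in Section \ref{section: shape kisin} and re-normalizing the diagonal blocks back to $F_i, F_i'$.
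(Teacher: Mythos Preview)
Your proposal is correct and follows essentially the same approach as the paper: both reduce to the observation from \cite[Prop.~5.9]{Gao15} that the shape conditions are $k_E$-linear in the off-diagonal block. The paper condenses the argument into a single matrix identity---if $\squaremat{A}{C^{(i)}}{0}{A'}=\squaremat{A_1}{C^{(i)}_1}{0}{A'_1}\squaremat{A_0}{C^{(i)}_0}{0}{A'_0}$ for $i=1,2$, then $\squaremat{A}{aC^{(1)}+bC^{(2)}}{0}{A'}=\squaremat{A_1}{aC^{(1)}_1+bC^{(2)}_1}{0}{A'_1}\squaremat{A_0}{aC^{(1)}_0+bC^{(2)}_0}{0}{A'_0}$---which handles the two-factor decomposition $(F_i)_1(F_i)_0$ directly and makes your final paragraph's concern about compatibility with the exact sequence unnecessary (the definition, via \cite[Def.~5.8]{Gao15}, already builds this in).
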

\begin{proof}
Similar to \cite[Prop. 5.9]{Gao15}, by using the following fact of matrix multiplications:
If
$\squaremat{A}{C^{(i)}}{0}{A'}=
\squaremat{A_1}{C^{(i)}_1}{0}{A'_1}
\squaremat{A_0}{C^{(i)}_0}{0}{A'_0},
$ for $i=1, 2$, then
$\squaremat{A}{aC^{(1)}+bC^{(2)}}{0}{A'}=
\squaremat{A_1}{aC^{(1)}_1+bC^{(2)}_1}{0}{A'_1}
\squaremat{A_0}{aC^{(1)}_0+bC^{(2)}_0}{0}{A'_0}.
$
\end{proof}

\begin{thm} \label{mainlocal}
With notations in (CRYS), suppose $e=2$ and $\rhobar$ is upper triangular.
Suppose furthermore
\begin{itemize}
  \item (A1): $r_{i, 0, x+1}-r_{i, 0, x} > r_{i, 1, d}, \forall x, i$, and
  \item (A2):  $r_{i, 0, d}+r_{i, 1, d} \leq p-2, \forall i$.
\end{itemize}
Then $\barhatm$ (so in particular, $\barrho$) has an upper triangular crystalline lift with the same Hodge-Tate weights as $\rho$.
\end{thm}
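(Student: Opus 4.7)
The plan is to follow the inductive strategy of \cite[Thm.~7.4]{Gao15}, now made possible in the ramified (and higher-dimensional) setting by the structural results of Sections~\ref{section: filtration of kisin modules} and \ref{section: shape kisin}. First, Proposition~\ref{shape} under assumptions (A1) and (A2) gives an upper triangular filtration $\barm \in \mathcal E(\barn_d,\ldots,\barn_1)$ with rank-one graded pieces $\barn_x = \barm(t_{0,x},\ldots,t_{f-1,x};a_x)$, where $t_{i,x} = r_{i,0,\sigma_{i,0}(x)} + r_{i,1,\sigma_{i,1}(x)}$ for some orderings $\sigma_{i,0},\sigma_{i,1}$ of $\{1,\dots,d\}$, and moreover $\barhatm \in \mathcal E_{\vtshape}(\barhatn_d,\ldots,\barhatn_1)$ for the natural $\hat G$-enrichments $\barhatn_x$ determined by $\rhobar$. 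The target of the proof is to exhibit a lift in $\Mod_{\huaS_{\mathcal O_E}}^{\varphi,\Ghat}$ whose $\That$ is crystalline with the prescribed labeled Hodge--Tate weights.

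Next, for each $x$ choose a unit lift $\hat a_x\in\O_E^\times$ of $a_x$ and apply Lemma~\ref{rank1}(2) to the matrix $(r_{i,j,x'})$ with $r_{i,0,x'}=r_{i,0,\sigma_{i,0}(x)}$ and $r_{i,1,x'}=r_{i,1,\sigma_{i,1}(x)}$ to produce a rank-one object $\hat\hual_x \in \Mod_{\huaS_{\mathcal O_E}}^{\varphi,\Ghat}$ whose $\That(\hat\hual_x)$ is crystalline with labeled Hodge--Tate weights matching $\rho$ along the $x$-th strand, and whose reduction modulo $\omega_E$ is $\barhatn_x$. These rank-one modules will be the successive graded pieces of our lift. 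I would then proceed by induction on the length of the filtration: suppose an upper triangular crystalline $\hat\huam^{(k-1)} \in \mathcal E(\hat\hual_{d},\ldots,\hat\hual_{k})$ has been produced whose reduction is the corresponding truncation of $\barhatm$; the induction step is to show that the short exact sequence
\[
0 \longrightarrow \overline{\hat\hual_{k-1}} \longrightarrow \barhatm/\Fil^{k-2}\barhatm \longrightarrow \overline{\hat\huam^{(k-1)}} \longrightarrow 0
\]
(which lies in $\Ext_{\vtshape}(\overline{\hat\huam^{(k-1)}},\overline{\hat\hual_{k-1}})$ by Proposition~\ref{shape} combined with Definition~\ref{defn: phi shape extensions}) lifts to an extension in $\Ext(\hat\huam^{(k-1)},\hat\hual_{k-1})$ in the integral category, whose associated Galois representation will automatically be crystalline with the correct Hodge--Tate weights by construction.

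The core technical input, and the step I expect to be the main obstacle, is the surjectivity of the reduction map
\[
\Ext(\hat\huam^{(k-1)},\hat\hual_{k-1}) \;\longrightarrow\; \Ext_{\vtshape}\bigl(\overline{\hat\huam^{(k-1)}},\overline{\hat\hual_{k-1}}\bigr).
\]
Here Proposition~\ref{shapesubmodule} already tells us that the target is a sub-vector space of the ambient $\Ext$ in the $k_E$-category, and the plan is to mimic \cite[\S 6--7]{Gao15}: pick a matrix cocycle representative $(F_i, Z_i)$ for a $(\varphi,\tau)$-shape extension, and lift entry-by-entry to a matrix cocycle over $\O_E$ by choosing arbitrary lifts of the diagonal blocks (forced by $\hat\hual_{k-1}$ and $\hat\huam^{(k-1)}$) and then lifting the off-diagonal entries so that the factorization of $F_i$ as $C_i(F_i)_1(F_i)_0$ with both factors of type (P), as well as the $\tau$-shape constraint of Definition~\ref{defn: phi shape extensions}(2), are preserved. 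Compatibility with the $\varphi$- and $\tau$-cocycle conditions reduces to solving linear congruences in $\huaS_{\O_E}$ and $R\otimes_{\F_p}\O_E$; the smallness conditions (A1) and (A2), together with the explicit divisibility $u^{r_{i,j,\sigma_{i,j}(x)}}$ separating the relevant diagonal entries, ensure that the obstructions vanish in exactly the same way they did in \emph{loc.\ cit.}\ for the unramified case. Once such an integral cocycle is produced, the extension $\hat\huam^{(k)}$ it defines reduces to the given one and completes the induction; after $d$ steps, $\hat\huam^{(d)}$ is an upper triangular lift of $\barhatm$ in $\Mod_{\huaS_{\mathcal O_E}}^{\varphi,\Ghat}$ whose $\That$ is an upper triangular crystalline representation with $\HT_{i,j}(\That(\hat\huam^{(d)})) = \HT_{i,j}(\rho)$ for all $i,j$, as required.
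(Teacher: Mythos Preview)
Your overall induction scheme and the identification of the rank-one crystalline pieces via Lemma~\ref{rank1}(2) match the paper's strategy, but there is a genuine gap in how you propose to produce the crystalline extension at each step.

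You frame the induction step as showing that the reduction map $\Ext(\hat\huam^{(k-1)},\hat\hual_{k-1}) \to \Ext_{\vtshape}(\overline{\hat\huam^{(k-1)}},\overline{\hat\hual_{k-1}})$ is surjective, and you plan to lift a matrix cocycle $(F_i,Z_i)$ entry-by-entry while preserving the (P)-factorization and the $\tau$-shape. The problem is the sentence ``whose associated Galois representation will automatically be crystalline with the correct Hodge--Tate weights by construction.'' This is not justified: an arbitrary object of $\Mod_{\huaS_{\O_E}}^{\varphi,\Ghat}$ obtained by lifting a shaped cocycle need not be crystalline. Theorem~\ref{shapecorollary} gives only one direction (crystalline $\Rightarrow$ adapted basis), and the (P)-shape is a mod-$\omega_E$ condition that places no constraint on the integral lift beyond its reduction. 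So even if you can solve the congruences you allude to, the resulting extension lives in the full $\Ext$ group, not in the subset of crystalline extensions, and you have no mechanism to force it there.

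The paper circumvents this entirely by a dimension count rather than an explicit lift. One first bounds $\dim_{k_E}\Ext_{\vtshape}(\barhatm_2,\barhatm_1)$ from above by an explicit integer $d_{\cris}$ (using the combinatorics of (P)-shapes and, via \cite[Prop.~6.2]{Gao15}, the injection $\Ext_{\vtshape}\hookrightarrow\Ext_{\vshape}$, which uses $t_{i,x}\le p-2$ from (A2)). Independently, one computes the $\O_E$-rank of the genuinely crystalline extension module $\Ext_{\cris}(\hatm_2,\hatm_1)$ by Galois cohomology (Bloch--Kato, as in \cite[Thm.~7.4]{Gao15}) and finds it equal to the same $d_{\cris}$. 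Since the reduction of any crystalline extension lands in $\Ext_{\vtshape}$ by Proposition~\ref{shape}, the map $\Ext_{\cris}/\omega_E \to \Ext_{\vtshape}$ is then forced to be an isomorphism, and in particular surjective. The point is that crystallinity is fed in from the source side via the Galois-cohomological rank computation, not verified on a hand-built lift.
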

\begin{proof}
By Proposition \ref{shape}, $\barm \in \mathcal E_{\vshape}(\barn_d, \ldots, \barn_1)$. Furthermore by \cite[Cor. 5.10]{GLS14}, $\barhatm \in \mathcal E_{\vtshape}(\barhatn_d, \ldots, \barhatn_1)$.
Let us write $\barn_x =\barm(t_{0, x}, \ldots, t_{f-1, x}; a_x)$.
Then by Proposition \ref{tameinertia}, $t_{i, x} =r_{i, 0, \sigma_{i, 0}(x)}+r_{i, 1, \sigma_{i, 1}(x)}$ where $\sigma_{i, 0}, \sigma_{i, 1}$ are orderings of $\{1, \ldots, d\}$.
By Lemma \ref{rank1}(2), we can find a crystalline lift $\hatN_x$ of $\barhatn_x$ such that $\HT_{i, j}(\hat T(\hatN_x)) =\{ r_{i, j, \sigma_{i, j}(x)} \}$.
Now we claim: $\barhatm$ has an upper triangular lift in $\mathcal E_{\cris}(\hatn_d, \ldots, \hatn_1)$, where $\mathcal E_{\cris}(\hatn_d, \ldots, \hatn_1)$ is the set of crystalline representations made of successive extensions of $\hatn_d, \ldots, \hatn_1$. It is clear that this claim implies our theorem. The proof is similar to \cite[Thm. 7.4]{Gao15}, so we only give a sketch.

We prove the claim by induction on $d$. When $d=1$, there is nothing to prove. Suppose the statement is true for $d-1$, and now consider it for $d$.
Suppose $\barhatm \in \Ext_{\vtshape}(\barhatm_2, \barhatm_1)$
where $\barhatm_2 \in \mathcal E_{\vtshape}(\barhatn_{d}, \ldots, \barhatn_2)$ is of rank $d-1$, and $\barhatm_1$ is of rank 1.
We denote
$$d_{\cris} = \sum_{x=2}^d  \# \{i,  r_{i, 0, \sigma_{i, 0}(x)} > r_{i, 0, \sigma_{i, 0}(1)} \}+
\sum_{x=2}^d \# \{i,  r_{i, 1, \sigma_{i, 1}(x)} > r_{i, 1, \sigma_{i ,1}(1)} \}.$$
Then $\Ext_{\vshape}(\barm_2, \barm_1)$ is a $k_E$-vector space of dimension at most $d_{\cris}$. Since $t_{i, x}$ are all $\leq p-2$, we have $\Ext_{\vtshape}(\barhatm_2, \barhatm_1) \hookrightarrow \Ext_{\vshape}(\barm_2, \barm_1)$ by \cite[Prop. 6.2]{Gao15}. And so $\dim_{k_E}\Ext_{\vtshape}(\barhatm_2, \barhatm_1) \leq d_{\cris}$.

By Lemma \ref{rank1}(2) and our induction hypothesis, we can take crystalline lift $\hatm_1$ (resp. $\hatm_2$) of $\barhatm_1$ (resp. $\barhatm_2$), where $\hatm_2 \in \mathcal E_{\cris}(\hatn_d, \ldots, \hatn_2)$. Then similarly as in \cite[Thm. 7.4]{Gao15}, we will have that $\Ext_{\cris}(\hatm_2, \hatm_1)$ has $\O_E$-free rank equal to $d_{\cris}$, and so we will have an isomorphism of $k_E$-vector spaces:
$$\Ext_{\cris}(\hatm_2, \hatm_1)/\omega_E \to \Ext_{\vtshape}(\barhatm_2, \barhatm_1).$$
And then we can conclude as in \textit{loc. cit.}.
\end{proof}

\subsection{Application to weight part of Serre's conjecture}
For our application to weight part of Serre's conjectures, we will need a special type of crystalline representations.
\begin{defn} \label{definition serre type}
A crystalline representation $V$ as in (CRYS) is called of \emph{Serre type}, if the \emph{auxiliary} Hodge-Tate weights $\HT_{\kappa_{ij}} (V)=\{0, 1, 2, \ldots, d-1\}, \forall j \neq 0$.
\end{defn}
Clearly, when $d=2$, a Serre type representation is precisely a \emph{pseudo-Barsotti-Tate} representation as in \cite[Def. 2.3.1]{GLS15}.

\begin{corollary} \label{Serrecoro}

With notations in (CRYS), suppose $e=2$. Suppose $V$ if of Serre type and $\rhobar$ is upper triangular. Suppose furthermore:
\begin{itemize}
\item $r_{i, 0, x+1} -r_{i, 0, x} \geq d, \forall x, i$.

  \item  $r_{i, 0, d}+(d-1) \leq p-2, \forall i$.
\end{itemize}
Then $\barrho$ has an upper triangular crystalline lift $\rho'$, such that $\HT_{i,j}(\rho')=\HT_{i,j}(\rho), \forall i, j$.
\end{corollary}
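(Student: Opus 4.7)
The plan is to deduce this immediately from Theorem \ref{mainlocal}. The Serre type hypothesis specifies the auxiliary labelled Hodge--Tate weights: since $e=2$, the only auxiliary index is $j=1$, and by Definition \ref{definition serre type} we have $\HT_{\kappa_{i,1}}(V)=\{0,1,\ldots,d-1\}$ for every $i$. In the notation of (CRYS) this reads $r_{i,1,x}=x-1$ for $1\leq x\leq d$, and in particular $r_{i,1,d}=d-1$ for all $i$.

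With this substitution, the two technical assumptions (A1) and (A2) of Theorem \ref{mainlocal} become literally the two hypotheses of the corollary: (A1) becomes $r_{i,0,x+1}-r_{i,0,x}>d-1$, i.e.\ $\geq d$; and (A2) becomes $r_{i,0,d}+(d-1)\leq p-2$. The remaining hypotheses ($e=2$, $\rhobar$ upper triangular) are imposed identically in both statements, so Theorem \ref{mainlocal} applies verbatim and produces an upper triangular crystalline lift $\rho'$ of $\barrho$ with the same Hodge--Tate weights as $\rho$.

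So there is essentially no new work: the only thing to verify is that the Serre type condition does not impose some additional compatibility beyond pinning down $r_{i,1,d}=d-1$. This is immediate from Definition \ref{definition serre type}, since it prescribes the entire auxiliary weight set as $\{0,1,\ldots,d-1\}$ and in particular forces $r_{i,1,x}=x-1$, which is automatically strictly increasing and consistent with the ordering convention in (CRYS). No obstacle arises; the corollary is a clean specialization of Theorem \ref{mainlocal} to the Serre type case, packaged in this form precisely because it is the shape needed for the subsequent global application via the automorphy lifting theorems of \cite{BLGGT14}.
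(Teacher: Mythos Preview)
Your proposal is correct and matches the paper's own proof, which simply says this is an easy consequence of Theorem \ref{mainlocal} (and remarks in passing that the hypotheses force $d^2-1\le p-2$). Your explicit verification that the Serre type condition gives $r_{i,1,d}=d-1$ and hence that (A1), (A2) specialize to the stated bullets is exactly the content of that deduction.
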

\begin{proof}
This is easy consequence of Theorem \ref{mainlocal}. Let us note here from our assumptions, we must have $d^2-1 \leq p-2$.
\end{proof}

Finally we can prove our application in Serre weight conjectures. Since the application is straightforward (by using automorphy lifting theorems in \cite{BLGGT14}), we omit the precise definition of many terms in the statement. The reader can consult \cite{BLGG14} for more detailed explanation of any unfamiliar terms.

\begin{thm} \label{application}
Let $F$ be an imaginary CM field with maximal totally real subfield
  $F^+$, and suppose that $F/F^+$ is unramified at all finite places,
  that every place of $F^+$ dividing $p$ splits completely in $F$,
  and that if $d$ is even then $d[F^+:\Q]/2$ is even. Assume that
  $\zeta_p\notin F$.
  Suppose that $p>2$, and that
  $\overline r:G_F\to\GL_d(\Fpbar)$ is an irreducible
  representation with split ramification. Assume that there is a RACSDC automorphic representation $\Pi$ of
  $\GL_d(\mathbb A_F)$ such that
  \begin{itemize}
  \item $\rbar\cong\rbar_{p,\imath}(\Pi)$ (i.e., $\rbar$ is automorphic).
  \item For each place $w|p$ of $F$, $r_{p,\imath}(\Pi)|_{G_{F_w}}$ is
  potentially diagonalizable.
   \item $\rbar(G_{F(\zeta_p)})$ is adequate.
  \end{itemize}
Assume furthermore that $\rbar \mid_{G_{F_w}}$ is upper triangular for all $w\mid p$.
Let $a = (a_{w})_{w \mid p} \in(\Z^n_+)_0^{\coprod_{w|p}\Hom(k_w,\Fpbar)}$ be a
  Serre weight, and assume that $a\in W^{\rm{cris}}(\rbar)$.

For each $w|p$, suppose the ramification degree $e_w$ of $F_w/\Qp$ is either 1 or 2.
\begin{itemize}
  \item If $e_w=1$, then $a_w$ satisfies the properties in \cite[Thm. 8.3]{Gao15}.
  \item If $e_w=2$, then
  \begin{itemize}
  \item $a_{w, \kappa, x} -a_{w, \kappa, x+1} \geq d-1, \forall \kappa \in \Hom(k_w,\Fpbar), \forall 1\leq x \leq d-1$, and
    \item  $a_{w, \kappa, 1} -a_{w, \kappa, d} \leq p-2-2(d-1), \forall  \kappa \in \Hom(k_w,\Fpbar)$.
  \end{itemize}
\end{itemize}

Then $\rbar$ is automorphic of weight $a$.
\end{thm}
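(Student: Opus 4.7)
The plan is to reduce to an application of the automorphy lifting theorems of \cite{BLGGT14} by producing, for each place $w\mid p$, a potentially diagonalizable crystalline lift of $\overline r|_{G_{F_w}}$ whose Hodge-Tate weights correspond to the Serre weight $a_w$, and then globalizing these local lifts to a single characteristic-zero automorphic lift of $\overline r$ of weight $a$. This is the same overall strategy as \cite[Thm.~8.3]{Gao15}, with Corollary \ref{Serrecoro} providing the new input needed to handle places with $e_w=2$.

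First I would unpack the hypothesis $a\in W^{\rm cris}(\overline r)$ and translate the combinatorial conditions on $a_{w,\kappa,x}$ into the Hodge-Tate weight conditions of (CRYS). For each $w\mid p$, the upper triangular residual representation $\overline r|_{G_{F_w}}$ is the reduction of some crystalline lift $\rho_w$ with Hodge-Tate weights determined by $a_w$. When $e_w=1$, I would invoke \cite[Thm.~8.3]{Gao15} directly to replace $\rho_w$ by an upper triangular crystalline lift $\rho_w'$ with the same Hodge-Tate weights. When $e_w=2$, the auxiliary Hodge-Tate weights at $w$ are $\{0,1,\dots,d-1\}$, so $\rho_w$ is of Serre type in the sense of Definition \ref{definition serre type}. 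The hypothesis $a_{w,\kappa,x}-a_{w,\kappa,x+1}\geq d-1$ becomes, after the standard $(d-x)$-shift built into the Serre-weight/Hodge-Tate dictionary, the separation condition $r_{i,0,x+1}-r_{i,0,x}\geq d$ of Corollary \ref{Serrecoro}, and the bound $a_{w,\kappa,1}-a_{w,\kappa,d}\leq p-2-2(d-1)$ translates into $r_{i,0,d}+(d-1)\leq p-2$. Corollary \ref{Serrecoro} then supplies an upper triangular crystalline lift $\rho_w'$ of $\overline r|_{G_{F_w}}$ with exactly the right labelled Hodge-Tate weights at $w$.

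Next I would verify that each $\rho_w'$ is potentially diagonalizable in the sense of \cite[\S 1.4]{BLGGT14}. Since $\rho_w'$ is a successive extension of crystalline characters of $G_{F_w}$ built from the data of Lemma \ref{rank1}, after restriction to a suitable finite abelian extension the Hodge-Tate-regular characters in its semisimplification become unramified twists of powers of the cyclotomic character, and the extension class splits; this is the same mechanism exploited in \cite[Thm.~8.3]{Gao15} (and ultimately in the Fontaine-Laffaille argument of \cite[Lem.~1.4.2]{BLGGT14}), and the adapted-basis structure of Theorem \ref{adpt basis} underlying Corollary \ref{Serrecoro} is exactly what makes the splitting visible.

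Finally I would globalize and conclude. Using a standard globalization argument (for instance \cite[Lem.~4.1.6]{BLGGT14} together with the potential automorphy methods of \emph{loc.\ cit.}), one constructs an automorphic Galois representation of $G_F$ whose reduction is $\overline r$ and whose local representation at each $w\mid p$ is connected to $\rho_w'$ by a chain of potentially diagonalizable crystalline representations of the same Hodge-Tate type. The automorphy lifting theorem of \cite{BLGGT14}, whose hypotheses are supplied by the adequacy of $\overline r(G_{F(\zeta_p)})$ and the potential diagonalizability of the $\rho_w'$, then shows that $\overline r$ is automorphic of weight $a$. The main obstacle I anticipate is purely bookkeeping: matching the normalization conventions between the Serre weights $a_w$ of \cite{BLGG14}, the labelled Hodge-Tate conventions of (CRYS), and the Serre-type hypothesis of Definition \ref{definition serre type}, and in particular checking that the inequality $d^2-1\leq p-2$ noted at the end of Corollary \ref{Serrecoro} is automatically implied by the second numerical hypothesis on $a_w$, so that no additional restriction on $d$ sneaks in.
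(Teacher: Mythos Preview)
Your proposal is correct and follows essentially the same route as the paper, whose proof is simply ``Similar as the proof of \cite[Thm.~8.3]{Gao15}, using our Corollary~\ref{Serrecoro}.'' You have accurately unpacked what that one-liner means: at each $w\mid p$ one produces an upper triangular crystalline lift with the prescribed Hodge--Tate weights (via the local result of \cite{Gao15} when $e_w=1$, via Corollary~\ref{Serrecoro} when $e_w=2$), observes that such lifts are potentially diagonalizable, and then feeds everything into the machinery of \cite{BLGGT14}. One small wording point: for $e_w=1$ you should cite the \emph{local} crystalline lifting theorem underlying \cite[Thm.~8.3]{Gao15} (namely \cite[Thm.~7.4]{Gao15}) rather than the global theorem itself, but this is cosmetic.
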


\begin{proof}
Similar as the proof of \cite[Thm. 8.3]{Gao15}, using our Corollary \ref{Serrecoro}.
\end{proof}

\section*{Acknowledgement.}
Our paper is a natural generalization of the results by Toby Gee, Tong Liu and David Savitt, and it is always a great pleasure to acknowledge their beautiful papers.
We also want to sincerely thank Tong Liu for pointing out and helping to correct a serious mistake in Proposition \ref{existbase}.
This paper is written when the author is a postdoc in Beijing International Center for Mathematical Research, and we would like to thank the institute for the hospitality. The author also would like to heartily thank his postdoc mentor, Ruochuan Liu, for his constant interest, encouragement and support. I also thank the anonymous referee(s) for useful comments and corrections.
This work is partially supported by China Postdoctoral Science Foundation General Financial Grant 2014M550539.

\bibliographystyle{alpha}

\end{document}